\newtheorem{theorem}{Theorem}[section]
\newtheorem{lemma}{Lemma}[section]
\newcommand{\lefri}[1]{\left(#1\right)}
\newcommand{\dt}[0]{\mbox{d}t}
\newcommand{\galqn}[0]{\tilde{q}_{n}}
\newcommand{\dgalqn}[0]{\dot{\tilde{q}}_{n}}
\newcommand{\bj}[0]{b_{j}}
\newcommand{\cjh}[0]{c_{j}h}
\newcommand{\bnj}[0]{b_{n_{j}}}
\newcommand{\cnjh}[0]{c_{n_{j}}h}
\newcommand{\EDLh}[3]{L_{d}^{E}\lefri{#1,#2,#3}}
\newcommand{\EDLn}[0]{L_{d}^{E}}
\newcommand{\GDL}[2]{L_{d}^{G}\lefri{#1,#2}}
\newcommand{\GDLh}[3]{L_{d}^{G}\lefri{#1,#2,#3}}
\newcommand{\CQ}[0]{C^{2}\lefri{\left[0,h\right],Q}}
\newcommand{\FdFSpace}[0]{\mathbb{M}^{n}\lefri{\left[0,h\right],Q}}
\newcommand{\HoA}[0]{H_{0}^{1}\lefri{\left[0,h\right],\mathfrak{g}}}
\newcommand{\SobNorm}[2]{\left\|#1\right\|_{W^{1,#2}\lefri{\left[0,h\right]}}}
\newcommand{\LNorm}[2]{\left\|#1\right\|_{L^{#2}\lefri{\left[0,h\right]}}}
\newcommand{\MNorm}[2]{\left\|#1\right\|_{#2}}
\newcommand{\ApproxC}[0]{C_{A}}
\newcommand{\ApproxK}[0]{K_{A}}
\newcommand{\SpecK}[0]{K_{s}}
\newcommand{\SpecC}[0]{C_{s}}
\newcommand{\QuadC}[0]{C_{g}}
\newcommand{\QuadK}[0]{K_{g}}
\newcommand{\CoerC}[0]{C_{f}}
\newcommand{\OptC}[0]{C_{op}}
\newcommand{\LagLipC}[0]{L_{\alpha}}
\newcommand{\sigmaC}{C_{\sigma}}
\newcommand{\mixedC}{C_{m}}
\newcommand{\ddlgC}{C_{d}}
\newcommand{\derivC}{C_{p}}
\newcommand{\dboundC}{C_{\dot{\xi}}}
\newcommand{\cboundC}{C_{\xi}}
\newcommand{\potentC}{C_{V}}
\newcommand{\GroupC}[0]{C_{G}}
\newcommand{\AlgeC}[0]{C_{\mathfrak{g}}}
\newcommand{\AlgeGC}[0]{C_{\mathfrak{g}}^{G}}
\newcommand{\ApproxCi}[0]{C_{\mathfrak{A}}}
\newcommand{\CondC}[0]{C_{con}}
\newcommand{\truqargsf}[2]{\substack{q \in \CQ\\q\lefri{0} = #1, q\lefri{h} = #2}} 
\newcommand{\galargsf}[2]{\substack{q_{n} \in \FdFSpace\\q_{n}\lefri{0} = #1, q_{n}\lefri{h} = #2}}
\newcommand{\truqargs}[0]{\substack{q\lefri{0} = q_{k}\\q\lefri{h} = q_{k+1}\\q \in \CQ}} 
\newcommand{\Ggalargsf}[2]{\substack{g_{n} \in \GalSpacef{#1} \\ g_{n}\lefri{0} = #1,g_{n}\lefri{h}=#2}}
\newcommand{\Agalargsf}[5]{\substack{g_{n} \in \GalSpacef{#1} \\ \Phi^{-1}\lefri{L_{\gk^{-1}}#4} = #2,\Phi^{-1}\lefri{L_{\gk^{-1}}#5}=#3}}
\newcommand{\altAgalargsf}[5]{\substack{g_{n} \in \GalSpacef{#1} \\ \Phi^{-1}\lefri{L_{\gkm^{-1}}#4} = #2,\Phi^{-1}\lefri{L_{\gkm^{-1}}#5}=#3}}
\newcommand{\CayGalargsf}[2]{\substack{R_{n} \in \GalSpacef{#1} \\ R_{n}\lefri{0} = #1,R_{n}\lefri{0}=#2}}
\newcommand{\darkD}[1]{\textbf{D}_{#1}}
\newcommand{\SoT}[0]{SO\lefri{3}}
\newcommand{\soT}[0]{\mathfrak{so}\lefri{3}}
\newcommand{\galgn}[0]{\tilde{g}_{n}}
\newcommand{\dgalgn}[0]{\dot{\tilde{g}}_{n}}
\newcommand{\optgn}[0]{\hat{g}_{n}}
\newcommand{\doptgn}[0]{\dot{\hat{g}}_{n}}
\newcommand{\trug}[0]{\bar{g}}
\newcommand{\dtrug}[0]{\dot{\bar{g}}}
\newcommand{\trula}[0]{\bar{\eta}}
\newcommand{\optnla}[0]{\hat{\eta}_{n}}
\newcommand{\optla}[0]{\hat{\eta}}
\newcommand{\dgalnla}[0]{\dot{\tilde{\eta}}_{n}}
\newcommand{\galla}[0]{\tilde{\eta}}
\newcommand{\dgalla}[0]{\dot{\tilde{\eta}}}
\newcommand{\dtrula}[0]{\dot{\bar{\eta}}}
\newcommand{\doptnla}[0]{\dot{\hat{\eta}}_{n}}
\newcommand{\doptla}[0]{\dot{\hat{\eta}}}
\newcommand{\Cay}[1]{\Phi\lefri{#1}}
\newcommand{\GroupE}[2]{e_{g}\lefri{#1,#2}}
\newcommand{\AlgeE}[2]{e_{a}\lefri{#1,#2}}
\newcommand{\LGForm}[1]{L_{g_{k}}\Phi\lefri{#1}}
\newcommand{\LGFormf}[2]{L_{#1}\Phi\lefri{#2}}
\newcommand{\DLGForm}[1]{D_{\Phi\lefri{#1}}L_{g_{0}}D_{#1}\Phi\lefri{\dot{#1}}}
\newcommand{\DLGFormN}[1]{D_{\Phi\lefri{#1_{n}}}L_{g_{0}}D_{#1_{n}}\Phi\lefri{\dot{#1}_{n}}}
\newcommand{\GalSpace}[0]{\mathbb{GM}^{n}\lefri{g_{0}\times \left[0,h\right],G}}
\newcommand{\GalSpacef}[1]{\mathbb{GM}^{n}\lefri{#1\times \left[0,h\right],G}}
\newcommand{\CG}[0]{C^{2}\lefri{\left[0,h\right],G}}
\newcommand{\AFdFSpace}[0]{\mathbb{M}^{n}\lefri{\left[0,h\right],\mathfrak{g}}}
\newcommand{\RMetric}[2]{\left\langle #1, #2 \right\rangle}
\newcommand{\ALd}[0]{\hat{L}_{d}}
\newcommand{\altALd}[0]{\tilde{L}_{d}}
\newcommand{\gzero}[0]{g_{0}}
\newcommand{\baseg}[0]{g_{\beta}}
\newcommand{\gk}[0]{g_{k}}
\newcommand{\gkp}[0]{g_{k+1}}
\newcommand{\gkm}[0]{g_{k-1}}
\newcommand{\alzero}[0]{\xi_{0}}
\newcommand{\alone}[0]{\xi_{1}}
\newcommand{\ak}[0]{\xi_{k}}
\newcommand{\akp}[0]{\xi_{k+1}}
\newcommand{\akm}[0]{\xi_{k-1}}
\newcommand{\altak}[0]{\lambda_{k}}
\newcommand{\altakp}[0]{\lambda_{k+1}}
\newcommand{\inak}[0]{\xi_{k}^{i}}
\newcommand{\ina}[0]{\xi^{i}}
\newcommand{\inakf}[1]{\xi_{#1}^{i}}
\newcommand{\acurve}[1]{\xi\lefri{#1}}
\newcommand{\dacurve}[1]{\dot{\xi}\lefri{#1}}
\newcommand{\lgstat}[0]{\check{\xi}}
\newcommand{\dlgstat}[0]{\dot{\check{\xi}}}
\newcommand{\GalR}{R_{n}}
\newcommand{\dGalR}{\dot{R}_{n}}
\newcommand{\R}[1]{R\lefri{#1}}
\newcommand{\dR}[1]{\dot{R}\lefri{#1}}
\newcommand{\lgc}[0]{\xi\lefri{t}}
\newcommand{\dlgc}[0]{\dot{\xi}\lefri{t}}
\newcommand{\dellgc}[0]{\delta\xi\lefri{t}}
\newcommand{\deldlgc}[0]{\delta\dot{\xi}\lefri{t}}
\newcommand{\qdellgc}[0]{\delta\xi\lefri{\cjh}}
\newcommand{\qdeldlgc}[0]{\delta\dot{\xi}\lefri{\cjh}}
\newcommand{\ilgc}[0]{\eta\lefri{t}}
\newcommand{\dilgc}[0]{\dot{\eta}\lefri{t}}
\newcommand{\ddKdddq}[0]{\frac{\partial^{2} K}{\partial \dot{\xi}^{2}}}
\newcommand{\ddKddq}[0]{\frac{\partial^{2} K}{\partial \xi^{2}}}
\newcommand{\ddKddqdq}[0]{\frac{\partial^{2} K}{\partial \xi \partial \dot{\xi}}} 
\newcommand{\ddt}[0]{\frac{\mbox{d}}{\mbox{d}t}}
\newcommand{\ext}[0]{\operatornamewithlimits{ext}}
\newcommand{\argext}[0]{\operatornamewithlimits{argext}}
\begin{document}
\title{Lie Group Spectral Variational Integrators}

\author[J.~Hall]{James Hall}
\address{Department of Mathematics\\
University of California, San Diego\\
9500 Gilman Drive \#0112\\
La Jolla, California 92093-0112, USA}
\email{j9hall@math.ucsd.edu}

\author[M.~Leok]{Melvin Leok}
\address{Department of Mathematics\\
University of California, San Diego\\
9500 Gilman Drive \#0112\\
La Jolla, California 92093-0112, USA}
\email{mleok@math.ucsd.edu}

\begin{abstract}
We present a new class of high-order variational integrators on Lie groups. We show that these integrators are symplectic, momentum preserving, and can be constructed to be of arbitrarily high-order, or can be made to converge geometrically. Furthermore, these methods are stable and accurate for very large time steps. We demonstrate the construction of one such variational integrator for the rigid body, and discuss how this construction could be generalized to other related Lie group problems. We close with several numerical examples which demonstrate our claims, and discuss further extensions of our work.
\end{abstract}

\maketitle

\section{Introduction}

There is a deep and elegant geometric structure underlying the dynamics of many mechanical systems. Conserved quantities, such as the energy, momentum, and symplectic form offer insight into this structure, and through this, we obtain an understanding of the behavior of these systems that goes beyond what is conventionally available. Conservation laws reveal much about the stability and long term behavior of a system, and can even characterize the entire dynamics of a system when a sufficient number of them exist. Hence, there has been much recent interest in the field of geometric mechanics, which seeks to understand this structure using differential geometric and symmetry techniques.

From this geometric mechanics framework, it is possible to formulate numerical methods which respect much of the geometry of mechanical systems. There are a variety of approaches for constructing such methods, often known as \emph{structure-preserving methods}, including projection methods, splitting methods, symplectic Runge-Kutta methods, B-series expansion methods, to name a few. An extensive introduction can be found in \citet{HaLuWa2006}. One of the powerful frameworks, discrete mechanics, approaches the construction of numerical methods by developing much of the theory of geometric mechanics from a discrete standpoint. This approach has proven highly effective for constructing methods for problems in Hamiltonian and Lagrangian mechanics, specifically because these type of problems arise from a variational principle. Methods that make use of a variational principle and the framework of discrete mechanics are referred to as \emph{variational integrators}, and they have many favorable geometric properties, including conservation of the symplectic form and momentum.

A further advantage of variational integrators is that it is often straightforward to analyze the error of these methods. This has led turn to the development of high-order variational integrators, which can be constructed so that they converge very quickly. In \citet{HaLe2012}, such integrators for vector space problems were presented and analyzed. It was shown that such integrators can be arbitrarily high-order or even exhibit geometric convergence. Furthermore, these integrators are stable and accurate even with extremely large time steps, and using them it is easy to reconstruct highly accurate continuous approximations to the dynamics of the system of interest.

In this paper, we present an extension of that work to Lie group methods. Lie group methods are of particular interest in science and engineering applications. It can be shown that many problems of interest, from the dynamics of rigid bodies to the behavior of incompressible fluids, evolve on Lie groups. Furthermore, if a traditional numerical method is applied to a problem with dynamics in a Lie group, the approximate solution will typically depart from the Lie group, destroying a critical structural property of the solution. Our work gives general a general framework for constructing methods which will always evolve in the Lie group and which will share many of the desirable properties of the vector space type methods. Specifically, we will be able to construct methods of arbitrarily high-order and with geometric convergence, and we will be able to reconstruct high quality continuous approximations from these methods.

Lie group methods have a rich history and remain the subject of significant interest. An extensive introduction can be found in \citet{IsMKNZa2000}, which provides an excellent exposition of both the motivation for Lie group methods and many of the techniques used on Lie groups. Likewise, \citet{CeBy2003}, provide a very helpful general introduction to Lie group methods for the rigid body, which is a prototypical example of an interesting Lie group problem. In this paper, we provide a thorough example of the construction of our method for the rigid body, as this approach can easily be extended to other interesting problems. More recently \citet{BogMa2013} investigated the construction of high-order symplectic Lie group integrators from a discrete Hamilton-Pontryagin principle, and \citet{BuHoMe2011} described applications of such high-order Lie group discretizations, such as interpolation in \(\SoT\).

Galerkin variational integrators were proposed in \citet{MaWe2001}, and expanded on by \citet{Le04}. The concept of a Galerkin Lie group integrator was proposed in \citet{Le04} and expanded in \citet{LeSh2011b}. Our work expands upon this by generalizing both the diffeomorphisms used to construct the natural charts and the approximation spaces used to construct the curve on the Lie group, and establishing convergence results and properties of both the discrete solution and the continuous approximation.

\subsection{Discrete Mechanics}

Since we are working from the perspective of discrete mechanics, we will take a moment to review the fundamentals of the theory here. This will only be a brief summary, and extensive exposition of the theory can be found in \citet{MaWe2001}.

Consider a configuration manifold, \(Q\), which describes the configuration of a mechanical system at a given point in time. In discrete mechanics, the fundamental object is the \emph{discrete Lagrangian}, \(L_{d}:Q\times Q \times \mathbb{R} \rightarrow \mathbb{R}\). The discrete Lagrangian can be viewed as an approximation to the \emph{exact discrete Lagrangian} \(L_{d}^{E}\), where the \(L_{d}^{E}\) is defined to be the action of the Lagrangian on the solution of the Euler-Lagrange equations over a short time interval:%
\begin{align*}
L_{d}\lefri{q_{0},q_{1},h} \approx \EDLh{q_{0}}{q_{1}}{h} = \ext_{\truqargs{q_{0}}{q_{1}}} \int_{0}^{h} L\lefri{q,\dot{q}}\dt.
\end{align*}%
The discrete Lagrangian gives rise to a discrete action sum, which can be viewed as an approximation to the action over a long time interval:%
\begin{align*}
\mathbb{S}\lefri{\left\{q_{k}\right\}_{k=1}^{n}} = \sum_{k=0}^{n-1} L_{d}\lefri{q_{k},q_{k+1}} \approx \int_{t_{0}}^{t_{n}} L\lefri{q,\dot{q}} \dt,
\end{align*}%
and requiring stationarity of this discrete action sum subject to fixed endpoint conditions \(q_{0},q_{n}\), gives rise to the \emph{discrete Euler-Lagrange equations}:%
\begin{align} \label{dEL}
D_{1}L_{d}\lefri{q_{k},q_{k+1},h} + D_{2}L_{d}\lefri{q_{k-1},q_{k},h} = 0,
\end{align}%
where \(D_{i}\) denotes partial differentiation of a function with respect to the \(i\)-th argument. Given a point \(\lefri{q_{k-1},q_{k}}\), these equations implicitly define an update map \(F_{L_{d}}:\lefri{q_{k-1},q_{k}} \rightarrow \lefri{q_{k},q_{k+1}}\), which approximates the solution of the Euler-Lagrange equations for the continuous system. A numerical method which uses the update map \(F_{L_{d}}\) to construct numerical solutions to ODEs is referred to as a \emph{variational integrator}.

The power of discrete mechanics is derived from the discrete variational structure. Since the update map \(F_{L_{d}}\) is induced from a discrete analogue of the variational principle, much of the geometric structure from continuous mechanics can be extended to discrete mechanics. The discrete Lagrangian gives rise to \emph{discrete Legendre Transforms} \(\mathbb{F}L^{\pm}:Q \times Q\rightarrow T^{*}Q\):%
\begin{align*}
\mathbb{F}L_{d}^{+}\lefri{q_{k},q_{k+1}} &= \lefri{q_{k+1},D_{2}L_{d}\lefri{q_{k},q_{k+1}}},\\
\mathbb{F}L_{d}^{-}\lefri{q_{k},q_{k+1}} &= \lefri{q_{k},-D_{1}L_{d}\lefri{q_{k},q_{k+1}}},
\end{align*}
which lead to the extension of other classical geometric structures. It is important to note that, while there are two different discrete Legendre transforms, (\ref{dEL}) guarantees that \(\mathbb{F}L_{d}^{-}\lefri{q_{k},q_{k+1}} = \mathbb{F}L_{d}^{+}\lefri{q_{k-1},q_{k}}\), and thus they can be used interchangeably when defining the discrete geometric structure. By their construction, variational integrators induce a discrete symplectic form, i.e. \(\Omega_{L_{d}} = \lefri{\mathbb{F}^{\pm}L_{d}}^{*}\Omega\) which is conserved by the update map \(F^{*}_{L_{d}}\Omega_{L_{d}} = \Omega_{L_{d}}\), and a discrete analogue of Noether's Theorem, which states that if a discrete Lagrangian is invariant under a diagonal group action on \(\lefri{q_{k},q_{k+1}}\), it induces a discrete momentum map \(J_{L_{d}} = \lefri{\mathbb{F}L_{d}^{\pm}}^{*}J\), which is preserved under the update map: \(F^{*}_{L_{d}}J_{L_{d}} = J_{L_{d}}\). The existence of these discrete geometric conservation laws gives a systematic framework to construct powerful numerical methods which preserve structure.

The discrete Legendre transforms also allow us to define an update map through phase space \(\tilde{F}_{L_{d}}:T^{*}Q \rightarrow T^{*}Q\),%
\begin{align*}
\tilde{F}_{L_{d}}\lefri{q_{k},p_{k}} = \lefri{q_{k+1},p_{k+1}},
\end{align*}%
which is given by%
\begin{align*}
\tilde{F}_{L_{d}}\lefri{q_{k},p_{k}} = \mathbb{F}^{+}L_{d}\lefri{\lefri{\mathbb{F}^{-}L_{d}}^{-1}\lefri{q_{k},p_{k}}},
\end{align*}
known as the Hamiltonian flow map. As long as the discrete Lagrangian is sufficiently smooth, the Hamiltonian flow map and the Lagrangian flow map are compatible, and the geometric structure of discrete flow can be understood from either perspective, just as in the continuous theory.

The following commutative diagram illustrates the relationship between the discrete Legendre transforms, the Lagrangian flow map, the Hamiltonian flow map, and the discrete Lagrangian.%
\begin{align*}
\xymatrix{
& \lefri{q_{k},p_{k}}  \ar[rr]^{\tilde{F}_{L_{d}}} & & \lefri{q_{k+1},p_{k+1}} & \\
& & & & \\
\lefri{q_{k-1},q_{k}} \ar[uur]^{\mathbb{F}^{+}L_{d}} \ar[rr]_{F_{L_{d}}} & & \lefri{q_{k},q_{k+1}} \ar[rr]_{F_{L_{d}}} \ar[uur]^{\mathbb{F}^{+}L_{d}} \ar[uul]_{\mathbb{F}^{-}L_{d}}& &\lefri{q_{k+1},q_{k+2}} \ar[uul]_{\mathbb{F}^{-}L_{d}}
}
\end{align*}
A further consequence of the discrete mechanics framework is that it provides a natural mechanism for analyzing the order of accuracy of a variational integrator. Specifically, it can be shown that the variational integrator induced by the exact discrete Lagrangian produces an exact sampling of the true flow. Based on this, we have the following theorem which is critical for the error analysis of variational integrators:%
\begin{theorem}\label{ErrorThm}{\emph{Variational Order Analysis (Theorem 2.3.1 of \citet{MaWe2001}).}} If a discrete Lagrangian \(L_{d}\) approximates the exact discrete Lagrangian \(L_{d}^{E}\) to order p, i.e. \(L_{d}\lefri{q_{0},q_{1},h} = L_{d}^{E}\lefri{q_{0},q_{1},h} + \mathcal{O}\lefri{h^{p+1}}\), then the variational integrator induced by \(L_{d}\) is order \(p\) accurate.
\end{theorem}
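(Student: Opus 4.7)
My plan is to relate the discrete Hamiltonian flow \(\tilde{F}_{L_{d}}\) to the exact flow by first showing that the exact discrete Lagrangian \(\EDLn\) itself generates the exact Hamiltonian flow through the discrete Legendre transforms, and then transporting the hypothesized Lagrangian-level approximation through these transforms.

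First, I would show that \(\tilde{F}_{\EDLn}\) is the exact time-\(h\) map of the Euler--Lagrange flow. By definition \(\EDLh{q_{0}}{q_{1}}{h} = \int_{0}^{h} L(q,\dot{q})\dt\) evaluated on the interior extremizer \(q(t)\), which satisfies the continuous Euler--Lagrange equations with \(q(0)=q_{0}\), \(q(h)=q_{1}\). Differentiating the action in its endpoints and performing the standard variational integration by parts gives \(D_{1}\EDLn = -\dLddq(q(0),\dot{q}(0))\) and \(D_{2}\EDLn = \dLddq(q(h),\dot{q}(h))\). Interpreting these through the continuous Legendre transform identifies \(\mathbb{F}\EDLn^{\pm}\) with sampling the exact Hamiltonian trajectory at the two endpoints, so \(\tilde{F}_{\EDLn} = \mathbb{F}\EDLn^{+} \circ (\mathbb{F}\EDLn^{-})^{-1}\) coincides with the exact flow \(\Phi_{h}\).

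Second, I would transfer the approximation estimate to phase space. The hypothesis \(L_{d} = \EDLn + \mathcal{O}(h^{p+1})\), interpreted uniformly on compact sets in a sufficiently smooth sense, implies \(D_{i}L_{d} = D_{i}\EDLn + \mathcal{O}(h^{p+1})\) for \(i=1,2\). Hence the discrete Legendre transforms satisfy \(\mathbb{F}L_{d}^{\pm} = \mathbb{F}\EDLn^{\pm} + \mathcal{O}(h^{p+1})\). For sufficiently small \(h\), \(\mathbb{F}L_{d}^{-}\) is a local diffeomorphism (because \(\mathbb{F}\EDLn^{-}\) is, by non-degeneracy of \(L\)), so an implicit function theorem argument gives \((\mathbb{F}L_{d}^{-})^{-1} = (\mathbb{F}\EDLn^{-})^{-1} + \mathcal{O}(h^{p+1})\). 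Composing yields the one-step estimate
\begin{align*}
\tilde{F}_{L_{d}}\lefri{q_{k},p_{k}} = \Phi_{h}\lefri{q_{k},p_{k}} + \mathcal{O}\lefri{h^{p+1}}.
\end{align*}

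Third, I would convert this local error to a global one by the usual Gronwall/stability argument: a one-step error of \(\mathcal{O}(h^{p+1})\) combined with Lipschitz continuity of \(\Phi_{h}\) on a compact set accumulates over \(n = T/h\) steps to a global error of \(\mathcal{O}(h^{p})\) on any fixed time interval \([0,T]\), which is the statement of the theorem.

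The main obstacle is the implicit step where the pointwise approximation \(L_{d} - \EDLn = \mathcal{O}(h^{p+1})\) is promoted to the derivative approximation \(D_{i}L_{d} - D_{i}\EDLn = \mathcal{O}(h^{p+1})\). This is not automatic without additional regularity; the accepted resolution, which Marsden and West use, is to interpret the hypothesis as holding jointly smoothly in \((q_{0},q_{1},h)\) and uniformly on compact sets, so that differentiation in \((q_{0},q_{1})\) preserves the order. For the Galerkin and spectral Lie group discretizations constructed later in this paper the \(L_{d}\) are smooth in their arguments, so this technical hypothesis can be verified directly for each specific construction, after which the theorem applies cleanly.
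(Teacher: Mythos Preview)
The paper does not supply its own proof of this theorem; it is quoted as Theorem 2.3.1 of Marsden--West (2001) and used as a black box in the subsequent error analysis. Your sketch is essentially the standard argument from that reference: identify the discrete Legendre transforms of \(\EDLn\) with the endpoint values of the continuous Legendre transform via the first-variation formula, so that \(\tilde{F}_{\EDLn}\) is the exact time-\(h\) flow; then push the \(\mathcal{O}(h^{p+1})\) estimate through \(\mathbb{F}L_{d}^{\pm}\) and invert via the implicit function theorem to obtain a one-step error of the same order; finally accumulate via a Gronwall argument.

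You are also right to flag the one genuine subtlety, namely that \(L_{d}-\EDLn=\mathcal{O}(h^{p+1})\) must be read as a smooth-in-\((q_{0},q_{1},h)\) estimate on compacta so that it survives differentiation in the endpoint arguments. Marsden and West make exactly this regularity assumption, and the Galerkin and spectral constructions in the present paper inherit the needed smoothness from that of \(L\), \(\Phi\), and the basis functions, so the hypothesis is satisfied in the cases of interest. With that caveat, your proposal is correct and matches the cited source; there is nothing further to compare since the paper itself defers to that source.
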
%
This theorem allows for greatly simplified \emph{a priori} error estimates of variational integrators, and is a fundamental tool for the development and analysis of high-order variational integrators.

\section{Construction}

\subsection{General Galerkin Variational Integrators}

Lie group Galerkin variational integrators are an extension of Galerkin variational integrators to Lie groups. As such, we will briefly review the construction of general Galerkin variational integrators.

The driving idea behind Galerkin variational integrators is approaching the construction of a discrete Lagrangian as the approximation of a variational problem. We know from discrete mechanics that the exact discrete Lagrangian \(\EDLn:Q\times Q \times \mathbb{R} \rightarrow \mathbb{R}\),%
\begin{align*}
\EDLh{q_{0}}{q_{1}}{h} = \ext_{\truqargsf{q_{0}}{q_{1}}}\int_{0}^{h}L\lefri{q,\dot{q}}\dt,
\end{align*}%
induces a variational integrator that produces an exact sampling of the true flow, and the accuracy with which a variational integrator approximates the true solution is the same as the accuracy to which the discrete Lagrangian used to construct it approximates the exact discrete Lagrangian. Hence, to construct a highly accurate discrete Lagrangian, we construct a discrete approximation%
\begin{align*}
\GDLh{q_{0}}{q_{1}}{h} = \ext_{\galargsf{q_{0}}{q_{1}}} h\sum_{j=1}^{m}b_{j}L\lefri{q_{n}\lefri{c_{j}h},\dot{q}_{n}\lefri{c_{j}h}} \approx \ext_{\truqargsf{q_{0}}{q_{1}}}\int_{0}^{h}L\lefri{q,\dot{q}}\dt
\end{align*}%
by replacing the function space \(\CQ\) with a finite-dimensional subspace \(\FdFSpace \subset \CQ\) and the integral with a quadrature rule, \(h\sum_{j=1}^{m}b_{j}f\lefri{c_{j}h} \approx \int_{0}^{h}f\dt\). Finding the extremizer of the discrete action is computationally feasible, and by computing this extremizer we can construct the variational integrator that results from the discrete Lagrangian. Because this approach of replacing the function space \(\CQ\) with a finite-dimensional subspace is inspired by Galerkin methods for partial differential equations, we refer to variational integrators constructed in this way as \emph{Galerkin variational integrators}.

In \citet{HaLe2012}, we studied Galerkin variational integrators on linear spaces. Specifically, we obtained several significant results, including that Galerkin variational integrators over linear spaces are in a certain sense order-optimal, and that by enriching the function space \(\FdFSpace\), as opposed to shortening the time step \(h\), we can construct variational integrators that converge geometrically. Furthermore, we established that it is easy to recover a continuous approximation to the trajectory over the time step \(\left[0,h\right]\), and that the convergence of this continuous approximation is related to the rate of convergence of the variational integrator. Finally, we established an error bound on Noether quantities evaluated on this continuous approximation which is independent of the number of steps taken.

\subsection{Lie Group Galerkin Variational Integrators}

The construction and analysis in \citet{HaLe2012} relied on the linear structure of the spaces involved. At their heart, Galerkin variational integrators make use of a Galerkin curve%
\begin{align*}
\galqn\lefri{t} = \sum_{i=1}^{n}q^{i}\phi_{i}\lefri{t}
\end{align*}%
for some set of points \(\left\{q^{i}\right\}_{i=1}^{n} \subset Q\) and basis functions \(\left\{\phi_{i}\right\}_{i=1}^{n}\). While for linear spaces, \(\galqn\lefri{t} \in Q\) for any choice of \(t\), in nonlinear spaces this will not be the case. However, when \(Q\) is a Lie group, it is possible to extend this construction in a way that keeps the curve \(\galqn\lefri{t}\) in \(Q\).

\subsubsection{Natural Charts}

To generalize Galerkin variational integrators to Lie groups, we will make use of the linear nature of the Lie algebra associated with the Lie group. Specifically, given a Lie group \(G\) and its associated Lie algebra \(\mathfrak{g}\), we choose a local diffeomorphism \(\Phi:\mathfrak{g}\rightarrow G\). Then, given a set of points in the Lie group \(\left\{g_{i}\right\}_{i=1}^{n} \subset G\) and a set of associated interpolation times \(t_{i}\), we can construct an interpolating curve \(g: G^{n} \times \mathbb{R}\rightarrow G\) such that \(g\lefri{\left\{g_{i}\right\}_{i=1}^{n},t_{i}} = g_{i}\), given by%
\begin{align*}
g\lefri{\left\{g_{i}\right\}_{i=1}^{n},t} = \LGFormf{g_{1}}{\sum_{i=1}^{n}\Phi^{-1}\lefri{L_{g_{1}^{-1}}g_{i}}\phi_{i}\lefri{t}},
\end{align*}%
where \(L_{g}\) is the left group action of \(g\) and \(\phi_{i}\lefri{t}\) is the Lagrange interpolation polynomial for \(t_{i}\). A key feature of this type of curve is that is \emph{Lie group equivariant}, that is, \(g\lefri{\left\{L_{g_{0}}g_{i}\right\}_{i=1}^{n},t} = L_{g_{0}}g\lefri{\left\{g_{i}\right\}_{i=1}^{n},t}\), as we shall show in the following lemma.

\begin{lemma}
The curve \(g\lefri{\left\{g_{i}\right\}_{i=1}^{n},t}\) is Lie group equivariant.
\end{lemma}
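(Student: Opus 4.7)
The plan is to verify equivariance by a direct substitution into the defining formula
\begin{align*}
g\lefri{\left\{g_{i}\right\}_{i=1}^{n},t} = \LGFormf{g_{1}}{\sum_{i=1}^{n}\Phi^{-1}\lefri{L_{g_{1}^{-1}}g_{i}}\phi_{i}\lefri{t}},
\end{align*}
applied to the transformed point set \(\{L_{g_0} g_i\}_{i=1}^n\). The crucial observation is that under the left translation, the ``basepoint'' \(g_1\) is likewise replaced by \(L_{g_0} g_1 = g_0 g_1\), so the outer translation in the formula becomes \(L_{g_0 g_1}\) and the inverse translation inside \(\Phi^{-1}\) becomes \(L_{(g_0 g_1)^{-1}}\).

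First I would write down \(g\lefri{\left\{L_{g_0}g_{i}\right\}_{i=1}^{n},t}\) explicitly using the defining formula, obtaining
\begin{align*}
g\lefri{\left\{L_{g_0}g_{i}\right\}_{i=1}^{n},t} = L_{g_0 g_1}\Phi\lefri{\sum_{i=1}^{n}\Phi^{-1}\lefri{L_{(g_0 g_1)^{-1}}L_{g_0} g_{i}}\phi_{i}\lefri{t}}.
\end{align*}
Next I would simplify the argument of \(\Phi^{-1}\) using associativity of the group multiplication and the fact that left translations compose as \(L_{ab} = L_a \circ L_b\). Specifically,
\begin{align*}
L_{(g_0 g_1)^{-1}} L_{g_0} g_i = (g_0 g_1)^{-1} g_0 g_i = g_1^{-1} g_0^{-1} g_0 g_i = g_1^{-1} g_i = L_{g_1^{-1}} g_i,
\end{align*}
so the sum inside \(\Phi\) reduces to \(\sum_{i=1}^n \Phi^{-1}(L_{g_1^{-1}} g_i) \phi_i(t)\), identical to that in the original curve.

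Finally, using \(L_{g_0 g_1} = L_{g_0} \circ L_{g_1}\), I would pull \(L_{g_0}\) out to the front and recognize the remaining expression as exactly \(g(\{g_i\}_{i=1}^n, t)\), giving the desired identity \(g(\{L_{g_0}g_i\}_{i=1}^n, t) = L_{g_0} g(\{g_i\}_{i=1}^n, t)\). There is no genuine obstacle here; the argument is a short algebraic manipulation whose content is precisely that the ``trivialization'' at \(g_1\) is compatible with left translation because the basepoint is itself translated by the same group element, so the two translations cancel inside the Lie algebra expression.
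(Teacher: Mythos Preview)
Your proposal is correct and follows essentially the same direct computation as the paper: substitute the translated points, observe that the basepoint becomes \(g_0 g_1\), simplify \(L_{(g_0 g_1)^{-1}}L_{g_0}g_i = L_{g_1^{-1}}g_i\) using associativity, and factor \(L_{g_0 g_1} = L_{g_0}\circ L_{g_1}\) to recover \(L_{g_0}g(\{g_i\},t)\). The only cosmetic difference is notation (\(L_{g_0 g_1}\) versus the paper's \(L_{L_{g_0}g_1}\)); the argument is the same.
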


\begin{proof}
The proof is a direct calculation.%
\begin{align*}
g\lefri{\left\{L_{g_{0}}g_{i}\right\},t} &= L_{L_{g_{0}}g_{1}}\Phi\lefri{\sum_{i=1}^{n}\Phi^{-1}\lefri{L_{\lefri{L_{g_{0}}g_{1}}^{-1}}L_{g_{0}}g_{i}}\phi_{i}\lefri{t}} \\
&= L_{g_{0}}L_{g_{1}}\Phi\lefri{\sum_{i=1}^{n}\Phi^{-1}\lefri{L_{g_{1}^{-1}}L_{g_{0}^{-1}}L_{g_{0}}g_{i}}\phi_{i}\lefri{t}}\\
&= L_{g_{0}}L_{g_{1}}\Phi\lefri{\sum_{i=1}^{n}\Phi^{-1}\lefri{L_{g_{1}^{-1}}g_{i}} \phi_{i}\lefri{t}} \\
&= L_{g_{0}}g\lefri{\left\{g_{i}\right\}_{i=1}^{n},t}.
\end{align*}%
\end{proof}%
This property will be important for ensuring that the Lie group Galerkin discrete Lagrangian inherits the symmetries of the continuous Lagrangian; these inherited symmetries give rise to the structure-preserving properties of the resulting variational integrator.
 
Throughout this paper, we will consider the function spaces composed of curves of this form. We note that \(\Phi^{-1}\lefri{L_{g_{1}^{-1}}g_{i}} \in \mathfrak{g}\), and for any \(\xi \in \mathfrak{g}\), \(L_{g_{1}}\Phi\lefri{\xi} \in G\), so we can construct interpolation curves on the group in terms of interpolation curves in the Lie algebra. In light of this, we define%
\begin{align*}
\GalSpace := \left\{g\lefri{\left\{\ina\right\}_{i=1}^{n},t} \vphantom{ g\lefri{\left\{\ina\right\}_{i=1}^{n},t} = L_{\gzero}\Phi\lefri{\sum_{i=1}^{n}\ina \phi_{i}\lefri{t}}, \ina \in \mathfrak{g}}\right. \left|\; g\lefri{\left\{\ina\right\}_{i=1}^{n},t} = \LGFormf{g_{0}}{\sum_{i=1}^{n}\ina \phi_{i}\lefri{t}}, \ina \in \mathfrak{g}\right\}
\end{align*}%
where \(\left\{\phi_{i}\lefri{t}\right\}_{i=1}^{n}\) forms the basis for a finite dimensional-approximation space in \(\mathbb{R}\), for example, Lagrange interpolation polynomials, which is what we will use in our explicit construction in \S \ref{CayCon} and numerical examples in \S \ref{NumExp}. We refer to the space of finite dimensional curves in the Lie algebra as%
\begin{align*}
\AFdFSpace = \left\{\lgc \vphantom{\lgc = \sum_{i=1}^{n} \ina \phi_{i}\lefri{t}, \ina \in \mathfrak{g}, \phi_{i}:\left[0,h\right] \rightarrow \mathbb{R}} \right.\left|\; \vphantom{\lgc} \lgc = \sum_{i=1}^{n} \ina \phi_{i}\lefri{t}, \ina \in \mathfrak{g}, \phi_{i}:\left[0,h\right] \rightarrow \mathbb{R}\right\}.
\end{align*}

Because we are identifying every point in a neighborhood of the Lie group with a point in the Lie algebra, which is a vector space, it is natural to think of this construction as choosing a set of coordinates for a neighborhood in the Lie group. Thus, we can consider this construction as choosing a chart for a neighborhood of the Lie group, and because it makes use of the ``natural'' relationship between the Lie group \(G\), its Lie algebra \(\mathfrak{g}\), and the tangent space of the Lie group \(TG\), we call the function \(\varphi_{g_{0}}:G\rightarrow \mathfrak{g}\), \(\varphi_{g_{0}}\lefri{\cdot}= \Phi^{-1}\lefri{L_{g_{0}^{-1}}\lefri{\cdot}}\) a ``natural chart.''

\subsubsection{Discrete Lagrangian}

Now that we have introduced a Lie group approximation space, we can define a compatible discrete Lagrangian for Lie group problems. We take a similar approach to the construction for vector spaces; we construct an approximation to the action of the Lagrangian over \(\left[0,h\right]\) by replacing \(\CG\) with a finite-dimensional approximation space and the integral with a quadrature rule, and then compute its extremizer. Specifically, given a Lagrangian on the tangent space of a Lie group \(L:TG \rightarrow \mathbb{R}\), the associated Lie group Galerkin discrete Lagrangian is defined to be:%
\begin{align*}
L_{d}\lefri{\gk,\gkp} = \ext_{\Ggalargsf{\gk}{\gkp}} h \sum_{j=1}^{m} \bj L\lefri{g_{n}\lefri{\cjh},\dot{g}_{n}\lefri{\cjh}}.
\end{align*}%

\subsubsection{Internal Stage Discrete Euler-Poincar\'{e} Equations} \label{ISDEPSubsection}

This discrete Lagrangian involves solving an optimization problem, namely: find \(\galgn\lefri{t} \in \GalSpacef{g_{k}}\) such that \(\galgn\lefri{0} = g_{k}\), \(\galgn\lefri{h} = g_{k+1}\), and %
\begin{align}
h \sum_{j=1}^{m} \bj L\lefri{\galgn\lefri{\cjh},\dgalgn\lefri{\cjh}}= \ext_{\Ggalargsf{\gk}{\gkp}} h \sum_{j=1}^{m} \bj L\lefri{g_{n}\lefri{\cjh},\dot{g}_{n}\lefri{\cjh}}. \label{ExtCond}
\end{align}%
While this problem can be solved using standard methods of numerical optimization, it is also possible to reduce it to a root finding problem. Since each curve \(\galgn\lefri{t} \in \GalSpace\) is parametrized by a finite number of Lie algebra points \(\{\ina\}_{i=1}^{n}\), by taking discrete variations of the discrete Lagrangian with respect to these points, we can derive stationarity conditions for the extremizer. Specifically, if we denote%
\begin{align*}
\acurve{t} = \sum_{i=1}^{n} \ina \phi_{i}\lefri{t}
\end{align*}
then a straightforward computation reveals the stationarity condition:%
\begin{align*}
&h\sum_{j=1}^{m} b_{j} \left(\darkD{1}L \circ \darkD{\Phi\lefri{\acurve{\cjh}}}L_{\gk} \circ \darkD{\acurve{\cjh}}\Phi \circ \lefri{\sum_{i=1}^{n}\darkD{\ina}\acurve{\cjh}\cdot \delta \ina} +  \right. \\
& \hspace{3em} \left. \darkD{2}L \circ \darkD{\lefri{\Phi\circ \acurve{\cjh}, \darkD{\dacurve{\cjh}}\Phi\circ \dacurve{\cjh}}} \darkD{\Phi \circ \acurve{\cjh}} L_{\gk} \circ \darkD{\lefri{\acurve{\cjh},\dacurve{\cjh}}}\Phi \circ \lefri{\sum_{i=1}^{n}\darkD{\ina}\dacurve{\cjh} \cdot \delta \ina}\right) = 0
\end{align*}%
for arbitrary \(\left\{\delta \ina\right\}_{i=1}^{n}\). Using standard calculus of variations arguments, this reduces to%
\begin{align*}
&h\sum_{j=1}^{m} b_{j} \left(\darkD{1}L \circ \darkD{\Phi\lefri{\acurve{\cjh}}}L_{\gk} \circ \darkD{\acurve{\cjh}}\Phi \circ \darkD{\ina}\acurve{\cjh}\cdot \delta \ina + \vphantom{\darkD{2}L \circ \darkD{\lefri{\Phi\circ \acurve{\cjh}, \darkD{\dacurve{\cjh}}\Phi\circ \dacurve{\cjh}}} \darkD{\Phi \circ \acurve{\cjh}} L_{\gk} \circ \darkD{\lefri{\acurve{\cjh},\dacurve{\cjh}}}\Phi \circ \darkD{\ina}\dacurve{\cjh} \cdot \delta \ina} \right. \\
& \hspace{3em} \left. \darkD{2}L \circ \darkD{\lefri{\Phi\circ \acurve{\cjh}, \darkD{\dacurve{\cjh}}\Phi\circ \dacurve{\cjh}}} \darkD{\Phi \circ \acurve{\cjh}} L_{\gk} \circ \darkD{\lefri{\acurve{\cjh},\dacurve{\cjh}}}\Phi \circ \darkD{\ina}\dacurve{\cjh} \cdot \delta \ina\right) = 0
\end{align*}%
for \(i = 2,..,n-1\) (note that the sum of the Lie algebra elements has disappeared). Now using the linearity of one-forms, we can collect terms to further simplify this expression to%
\begin{align*}
&h\sum_{j=1}^{m} b_{j} \left(\left[\darkD{1}L \circ \darkD{\Phi\lefri{\acurve{\cjh}}}L_{\gk} \circ \darkD{\acurve{\cjh}}\Phi \circ \darkD{\ina}\acurve{\cjh}+  \vphantom{\darkD{2}L \circ \darkD{\lefri{\Phi\circ \acurve{\cjh}, \darkD{\dacurve{\cjh}}\Phi\circ \dacurve{\cjh}}} \darkD{\Phi \circ \acurve{\cjh}} L_{\gk} \circ \darkD{\lefri{\acurve{\cjh},\dacurve{\cjh}}}\Phi \circ \darkD{\ina}\dacurve{\cjh}\cdot \delta \ina}\right.\right.  \\
& \hspace{3em} \left. \left. \darkD{2}L \circ \darkD{\lefri{\Phi\circ \acurve{\cjh}, \darkD{\dacurve{\cjh}}\Phi\circ \dacurve{\cjh}}} \darkD{\Phi \circ \acurve{\cjh}} L_{\gk} \circ \darkD{\lefri{\acurve{\cjh},\dacurve{\cjh}}}\Phi \circ \darkD{\ina}\dacurve{\cjh}\right] \cdot \delta \ina\right) = 0
\end{align*}%
for \(i = 2,...,n-1\). Since \(\delta \ina\) is arbitrary, this implies that %
\begin{align}
&h\sum_{j=1}^{m} b_{j} \left(\darkD{1}L \circ \darkD{\Phi\lefri{\acurve{\cjh}}}L_{\gk} \circ \darkD{\acurve{\cjh}}\Phi \circ \darkD{\ina}\acurve{\cjh}\cdot + \vphantom{\darkD{2}L \circ \darkD{\lefri{\Phi\circ \acurve{\cjh}, \darkD{\dacurve{\cjh}}\Phi\circ \dacurve{\cjh}}} \darkD{\Phi \circ \acurve{\cjh}} L_{\gk} \circ \darkD{\lefri{\acurve{\cjh},\dacurve{\cjh}}}\Phi \circ \darkD{\ina}\dacurve{\cjh} \cdot \delta \ina}\right. \label{InterEPEquations} \\
& \hspace{3em} \left. \darkD{2}L \circ \darkD{\lefri{\Phi\circ \acurve{\cjh}, \darkD{\dacurve{\cjh}}\Phi\circ \dacurve{\cjh}}} \darkD{\Phi \circ \acurve{\cjh}} L_{\gk} \circ \darkD{\lefri{\acurve{\cjh},\dacurve{\cjh}}}\Phi \circ \darkD{\ina}\dacurve{\cjh}\right) = 0 \nonumber
\end{align}%
for \(i = 2,...,n-1\). These equations, which we shall refer to as the \emph{internal stage discrete Euler-Poincar\'{e} equations}, combined with the standard momentum matching condition,%
\begin{align}
D_{2}L_{d}\lefri{g_{k-1},g_{k}} + D_{1}L_{d}\lefri{g_{k},g_{k+1}} = 0, \label{DEPEquations}
\end{align}%
which we will discuss in more detail in the \S \ref{DEPSection}, can be easily solved with an iterative nonlinear equation solver. The result is a curve \(\galgn\lefri{t}\) which satisfies condition (\ref{ExtCond}). The next step of the one-step map is given by \(\galgn\lefri{h} = g_{k+1}\), which gives the variational integrator.

It should be noted that while the internal stage discrete Euler-Poincar\'{e} equations can be computed by deriving all of the various differentials in the chosen coordinates, it is often much simpler to form the discrete action%
\begin{align*}
\mathbb{S}_{d}\lefri{\left\{\ina\right\}_{i=1}^{n}} = h \sum_{j=1}^{m} b_{j} L \lefri{L_{g_{k}}\Phi\lefri{\sum_{i=1}^{n} \ina \phi_{i}\lefri{\cjh}}, \ddt \lefri{L_{g_{k}} \Phi\lefri{\sum_{i=1}^{n} \ina \phi_{i}\lefri{\cjh}}}}
\end{align*}%
explicitly and then compute the stationarity conditions directly in coordinates, rather than a step by step computation of the different maps in (\ref{InterEPEquations}). This is the approach we take when deriving the integrator for the rigid body in \S \ref{CayCon}, and it appears to be the much simpler approach in this case. However, the two approaches are equivalent, so if done carefully either will suffice to give the internal stage Euler-Poincar\'{e} equations.

\subsubsection{Momentum Matching Condition} \label{DEPSection}

A difficulty in the derivation of the discrete Euler-Poincar\'{e} equations is the computation of the discrete momentum terms%
\begin{align*}
p_{k,k+1}^{-} &= -D_{1}L_{d}\lefri{\gk,\gkp} \\
p_{k-1,k}^{+} &= D_{2}L_{d}\lefri{\gkm,\gk}
\end{align*}%
which are used in the discrete Euler-Poincar\'{e} equations (\ref{DEPEquations}),%
\begin{align*}
D_{1}L_{d}\lefri{\gk,\gkp} + D_{2}L_{d}\lefri{\gkm,\gk} = 0
\end{align*}
or
\begin{align*}
p_{k-1,k}^{+} = p_{k,k+1}^{-}.
\end{align*}
The difficulty arises because the discrete Lagrangian makes use of a local left trivialization. Through the local charts, we reduce the discrete Lagrangian to a function of algebra elements, and because the corresponding group elements are recovered through a complicated computation, working with the group elements directly to compute the discrete Euler-Poincar\'{e} equations is difficult. Because of this, to compute the discrete Euler-Poincar\'{e} equations, it is more natural to think of the discrete Lagrangian as a function of two Lie algebra elements. If we define a discrete Lagrangian on the Lie algebra \(\ALd: \mathfrak{g} \times \mathfrak{g} \rightarrow \mathbb{R}\) as%
\begin{align*}
\ALd\lefri{\ak,\akp} = \ext_{\Agalargsf{\gk}{\ak}{\akp}{g_{n}\lefri{0}}{g_{n}\lefri{h}}}h \sum_{j=1}^{m} \bj L\lefri{g_{n}\lefri{\cjh},\dot{g}_{n}\lefri{\cjh}}
\end{align*}
and compare it to the discrete Lagrangian on the Lie group,
\begin{align*}
L_{d}\lefri{\gk,\gkp} = \ext_{\Ggalargsf{\gk}{\gkp}}h\sum_{j=1}^{m}\bj L\lefri{g_{n}\lefri{\cjh},\dot{g}_{n}\lefri{\cjh}}
\end{align*}%
it can be seen that there is a simple one-to-one correspondence through the natural charts between points in \(G \times G\)  and points in \(\mathfrak{g} \times \mathfrak{g}\), and that if \(\lefri{\Phi^{-1}\lefri{L_{g_{k}^{-1}}g_{0}},\Phi^{-1}\lefri{L_{g_{k}^{-1}}g_{1}}} = \lefri{\alzero,\alone}\), then%
\begin{align*}
L\lefri{g_{0},g_{1}} = \ALd\lefri{\alzero,\alone}.
\end{align*}%
Hence, for every sequence \(\left\{g_{k}\right\}_{k=1}^{N}\), there exists a unique sequence \(\left\{\ak\right\}_{i=1}^{N}\) such that%
\begin{align}
\sum_{k=1}^{N-1}L_{d}\lefri{\gk,\gkp} = \sum_{k=1}^{N-1}\ALd\lefri{\ak,\akp}, \label{DiscreteActionSum}
\end{align}%
and vice versa. Thus, we can find the sequence \(\left\{g_{k}\right\}_{k=1}^{N}\) that makes the sum on the left hand side of (\ref{DiscreteActionSum}) stationary by finding the sequence \(\left\{\ak\right\}_{k=1}^{N}\) that makes the sum on the right hand side of (\ref{DiscreteActionSum}) stationary. 

It can easily be seen that the stationarity condition of the action sum on the right is%
\begin{align}
D_{2}\ALd\lefri{\akm,\ak} + D_{1}\ALd\lefri{\ak,\akp} = 0, \label{DEPStationary}.
\end{align}%
However, from the definition of \(\ALd\), this implicitly assumes that \(\lefri{\akm,\ak}\) and \(\lefri{\ak,\akp}\) are in the same natural chart. Unfortunately, in our construction \(\lefri{\akm,\ak}\) and \(\lefri{\ak,\akp}\) are in different natural charts. This is because the construction of the Lie group interpolating curve%
\begin{align*}g\lefri{t} = L_{\baseg}\Phi\lefri{\sum_{i=1}^{n}\inak\phi_{i}\lefri{t}}\end{align*}%
requires the choice of a base point for the natural chart \(\baseg \in G\). If a consistent choice of base point was made for each time step, then the above equations could be directly computed without difficulty. However, because many natural chart functions contain coordinate singularities, our construction uses a different base point, and thus a different natural chart, at each time step. Specifically, on the interval \(\left[kh,(k+1)h\right]\), we choose \(\baseg = \gk\) and define%
\begin{align*}
g\lefri{t} = L_{\gk}\Phi\lefri{\sum_{i=1}^{n}\inak\phi_{i}\lefri{t}}.
\end{align*}%
Thus%
\begin{align*}
g\lefri{t} &= L_{\gkm}\Phi\lefri{\sum_{j=1}^{n}\inakf{k-1}\phi_{i}\lefri{t}}, t \in \left[\lefri{k-1}h,kh\right]\\
g\lefri{t} &= L_{\gk}\Phi\lefri{\sum_{j=1}^{n}\inak\phi_{i}\lefri{t}}, t \in \left[kh,\lefri{k+1}h\right],
\end{align*}%
where we now denote internal stage points \(\inak\) with the subscript \(k\) to denote in which interval they occur. While the change in natural chart is expedient for the construction, it creates a difficulty for the computation of the discrete Euler-Poincar\'{e} equations, in that now we are using discrete Lagrangians with different natural charts for the different time steps, and hence we cannot compute the discrete Euler-Poincar\'{e} equations using (\ref{DEPStationary}). This problem can be resolved by expressing \(g\lefri{t}\), and hence \(\lefri{\akm,\ak}\) and \(\lefri{\ak,\akp}\), in the same natural chart for \(t\in\left[\lefri{k-1}h,\lefri{k+1}h\right]\). Rewriting%
\begin{align*}
g_{n}\lefri{t} = L_{\gk}\Phi\lefri{\sum_{i=1}^{n}\inak \phi_{i}\lefri{t}} = L_{\gkm}\Phi\lefri{\Phi^{-1}\lefri{L_{\gkm^{-1}}L_{\gk}\Phi\lefri{\sum_{i=1}^{n}\inak \phi_{i}\lefri{t}}}}, t \in \left[kh,\lefri{k+1}h\right],
\end{align*}%
(note that \(g_{n}\lefri{t}\) is still in \(\GalSpacef{g_{k}}\)), and defining
\begin{align*}
\lambda\lefri{t} = \Phi^{-1}\lefri{L_{g_{k-1}^{-1}}L_{g_{k}}\Phi\lefri{\sum_{i=1}^{n}\inak \phi_{i}\lefri{t}}} = \Phi^{-1}\lefri{L_{\Phi\lefri{\ak}}\Phi\lefri{\sum_{i=1}^{n}\inak \phi_{i}\lefri{t}}}
\end{align*}
we can reexpress the discrete Lagrangian as %
\begin{align*}
\altALd\lefri{\altak,\altakp} = \ext_{\altAgalargsf{g_{k}}{\altak}{\altakp}{g_{n}\lefri{0}}{g_{n}\lefri{h}}}h \sum_{j=1}^{m} \bj L\lefri{g_{n}\lefri{\cjh},\dot{g}_{n}\lefri{\cjh}}.
\end{align*}
Note that if \(L_{g_{k-1}}\Phi\lefri{\altak} = L_{g_{k}}\Phi\lefri{\ak}\) and \(L_{g_{k-1}}\Phi\lefri{\altakp} = L_{g_{k}}\Phi\lefri{\akp}\) that%
\begin{align*}
\ALd\lefri{\ak,\akp} = \altALd\lefri{\altak,\altakp}.
\end{align*}%
Furthermore, \(\lefri{\altak,\altakp}\) are in the same chart as \(\lefri{\akm,\ak}\), and hence the discrete Euler-Poincar\'{e} equations are
\begin{align*}
D_{2}\ALd\lefri{\akm,\ak} + D_{1}\altALd\lefri{\altak,\altakp} = 0.
\end{align*}
It remains to compute \(\altak\) as a function of \(\ak\). If we consider the definition of \(\lambda\lefri{t}\), then%
\begin{align*}
\altak &= \lambda\lefri{0} =\Phi^{-1}\lefri{L_{\Phi\lefri{\ak}}\Phi\lefri{\sum_{i=1}^{n}\inak\phi_{i}\lefri{0}}}
\end{align*}%
and%
\begin{align}
\ak = \Phi^{-1}\lefri{L_{g_{k}^{-1}}g_{n}\lefri{0}} = \Phi^{-1}\lefri{L_{\Phi\lefri{\ak}^{-1}}\Phi\lefri{\altak}}. \label{CoordChange}
\end{align}%
This is simply a change of coordinates, and hence computing the discrete Euler Lagrange equations amounts to using the change of coordinates map to transform the algebra elements into the same chart. Thus, %
\begin{align}
D_{2}\ALd\lefri{\akm,\ak} &= \frac{\partial L_{d}}{\partial \ak} \nonumber\\
D_{1}\altALd\lefri{\altak,\altakp} &= \frac{\partial L_{d}}{\partial \ak}\frac{\partial \ak}{\partial \altak} \label{PMinus}
\end{align}%
where (\ref{CoordChange}) can be used to compute \(\frac{\partial \ak}{\partial \altak}\). An explicit example is presented in section \S \ref{CayCon}.

There are several features of this computation that should be noted. First, since we are considering specific choices of natural charts, we may think of \(\ak\) and \(\altak\) as corresponding to a specific coordinate choice, and hence it is natural to use standard partial derivatives as opposed to coordinate free notation. Second, because \(\altak\) is a function of \(\ak\), which is in turn a function of \(\inak\), this is still a root finding problem over \(\inak\), and hence may be solved concurrently with the internal stage Euler-Poincar\'{e} equations (\ref{InterEPEquations}).

\section{Convergence}

Thus far, we have discussed the construction of Lie group Galerkin variational integrators. Now we will prove several theorems related to their convergence. Unlike traditional integrators, we will achieve convergence in two different ways; the first will be the standard shortening of the time step \(\left[0,h\right]\), which we refer to as \(h\)-refinement. In practice, we refer to methods that achieve convergence through \(h\)-refinement as Lie group Galerkin variational integrators, after the method used to construct them. The second way that we achieve convergence is by enriching the function space \(\GalSpace\) and holding the time step \(\left[0,h\right]\) constant. Because enriching \(\GalSpace\) involves increasing the number of basis functions, and hence the value of \(n\), we refer to this as \(n\)-refinement. Because this approach of enriching the function space is inspired by classical spectral methods, as in \citet{Tr2000}, when we use \(n\)-refinement to achieve convergence we will refer to the the resulting method as a \emph{Lie group spectral variational integrator}.

\subsection{Geometric and Optimal Convergence}

Naturally, the goal of applying the spectral paradigm to the construction of Galerkin variational integrators is to construct methods which achieve geometric convergence. In this section, we will prove that under certain assumptions about the behavior of the Lagrangian and the approximation space, Lie group spectral variational integrators achieve geometric convergence. Additionally, the argument that establishes geometric convergence can be easily modified to show that the convergence of Lie group Galerkin integrators is, in a certain sense, optimal.

The proof of the rate of convergence Galerkin Lie group variational integrators is superficially similar to the proof of the rate of convergence of Galerkin variational integrators, which was established in \citep{HaLe2012}. The specific major difference is the need to quantify the error between two different curves on the Lie group. Unlike a normed vector space, there may not be a simple method of quantifying this error. For the moment, we will avoid this difficulty by simply assuming that the error between two curves that share a common point in a Lie group can be characterized through the error between curves in the Lie algebra. Specifically, we will make the following ``natural chart conditioning'' assumption:%
\begin{align}
  \GroupE{\LGFormf{g_{0}}{\lgc}}{\LGFormf{g_{0}}{\ilgc}} &\leq \GroupC \RMetric{\lgc - \ilgc}{\lgc - \ilgc}^{\frac{1}{2}} \label{CharCond1}\\
  \AlgeE{\ddt \LGFormf{g_{0}}{\lgc}}{\ddt \LGFormf{g_{0}}{\ilgc}} & \leq \AlgeC \RMetric{\dlgc - \dilgc}{\dlgc - \dilgc}^{\frac{1}{2}}  \label{CharCond2}\\
& \hspace{5em} + \AlgeGC \RMetric{\lgc - \ilgc}{\lgc - \ilgc}^{\frac{1}{2}} \nonumber
\end{align}%
for some functions \(\GroupE{\cdot}{\cdot}\) and \(\AlgeE{\cdot}{\cdot}\), which are chosen to measure the error in the Lie group and tangent bundle of the Lie group, respectively, and for some choice of Riemannian metric \(\RMetric{\cdot}{\cdot}\) on the Lie algebra. It is important to note that while the error function may be chosen to be the length of the geodesic curve that connects \(\LGForm{\xi}\) and \(\LGForm{\eta}\), there are other valid choices. This will greatly simplify error calculations; for example, in \S \ref{CayCon} we choose the error function to be the matrix two-norm, \(\MNorm{\cdot}{2}\), which is quickly and easily computed and will obey this inequality for the Riemannian metric we use.

\subsubsection{Optimal Convergence} We will begin by proving optimal convergence of Lie group Galerkin variational integrators. In this case, we take ``optimal'' to mean that the Lie group Galerkin variational integrator will converge at the same rate as the best possible approximation in the approximation space used to construct it.
%
%
\begin{theorem} \label{LG_OptConv} Given an interval \(\left[0,h\right]\), and a Lagrangian \(L:TG \rightarrow \mathbb{R}\), suppose that \(\trug\lefri{t}\) solves the Euler-Lagrange equations on that interval exactly. Furthermore, suppose that the exact solution \(\trug\lefri{t}\) falls within the range of the natural chart, that is:%
\begin{align*}
\trug\lefri{t} = \LGForm{\trula\lefri{t}}
\end{align*}
for some \(\trula\lefri{t} \in C^{2}\lefri{\left[0,h\right],\mathfrak{g}}\). For the function space \(\GalSpacef{g_{0}}\) and the quadrature rule \(\mathcal{G}\), define the Galerkin discrete Lagrangian \(\GDL{g_{0}}{g_{1}} \rightarrow \mathbb{R}\) as%
\begin{align}
\GDLh{g_{0}}{g_{1}}{h} = \ext_{\Ggalargsf{g_{0}}{g_{1}}} h \sum_{j=1}^{m}b_{j}L\lefri{g_{n}\lefri{c_{j}h},\dot{g}_{n}\lefri{c_{j}h}} = h \sum_{h=1}^{m}b_{j}L\lefri{\galgn\lefri{c_{j}h},\dgalgn\lefri{c_{j}h}} \label{LG_GalDiscAct}
\end{align}%
where \(\galgn\lefri{t}\) is the extremizing curve in \(\GalSpace\). If:
\begin{enumerate}
\item there exists an approximation \(\optnla \in \AFdFSpace\) such that,%
\begin{align*} 
\RMetric{\trula\lefri{t} - \optnla\lefri{t}}{\trula\lefri{t} - \optnla\lefri{t}}^{\frac{1}{2}} & \leq \ApproxC h^{n}\\
\RMetric{\dtrula\lefri{t} - \doptnla\lefri{t}}{\dtrula\lefri{t} - \doptnla\lefri{t}}^{\frac{1}{2}} &\leq \ApproxCi h^{n},
\end{align*}%
for some constants \(\ApproxC \geq 0\) and \(\ApproxCi \geq 0\) independent of \(h\),
\item the Lagrangian \(L\) is Lipschitz in the chosen norms in both its arguments, that is:%
\begin{align*}
\left|L\lefri{g_{1},\dot{g}_{1}} - L\lefri{g_{2},\dot{g}_{2}}\right| \leq \LagLipC \lefri{\GroupE{g_{1}}{g_{2}} + \AlgeE{\dot{g}_{1}}{\dot{g}_{2}}},
\end{align*}%
\item the chart function \(\Phi\) is well-conditioned in \(\GroupE{\cdot}{\cdot}\) and \(\AlgeE{\cdot}{\cdot}\), that is (\ref{CharCond1}) and (\ref{CharCond2}) hold,
\item for the quadrature rule \(\mathcal{G}\lefri{f} = h \sum_{j=1}^{m}b_{j}f\lefri{c_{j}h} \approx \int_{0}^{h} f\lefri{t}\dt\), there exists a constant \(\QuadC \geq 0\) such that,%
\begin{align*}
\left|\int_{0}^{h}L\lefri{g_{n}\lefri{t},\dot{g}_{n}\lefri{t}}\dt - h \sum_{j=1}^{m}b_{j}L\lefri{g_{n}\lefri{c_{j}h},\dot{g}_{n}\lefri{c_{j}h}}\right| \leq \QuadC h^{n+1}
\end{align*}%
for any \(g_{n}\lefri{t} = L_{g_{0}}\Phi\lefri{\xi\lefri{t}}\) where \(\xi \in \AFdFSpace\),
\item the stationary points of the discrete action and the continuous action are minimizers,
\end{enumerate}
then the variational integrator induced by \(\GDL{g_{0}}{g_{1}}\) has error \(\mathcal{O}\lefri{h^{n+1}}\).
\end{theorem}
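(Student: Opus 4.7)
The plan is to apply Theorem \ref{ErrorThm} after showing that the Galerkin discrete Lagrangian matches the exact discrete Lagrangian to order $n+1$, namely
\begin{align*}
\left|\GDLh{g_{0}}{g_{1}}{h} - \EDLh{g_{0}}{g_{1}}{h}\right| \leq C h^{n+1}.
\end{align*}
The proof template follows the vector-space Galerkin argument of \citet{HaLe2012}; the new ingredient is the use of the chart-conditioning bounds (\ref{CharCond1})--(\ref{CharCond2}) as the bridge from Lie-algebra approximation error to Lie-group error. The main intermediate object is a reference curve $\optgn\lefri{t} = \LGForm{\optnla\lefri{t}}$ built from the best-approximation curve $\optnla$ of hypothesis (1), chosen with endpoint values matching those of $\trula$ so that $\optgn \in \GalSpacef{\gzero}$ with the correct endpoints $\gzero,g_{1}$ and hence is an admissible competitor in the discrete extremization.

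For the upper bound I would use the minimizer hypothesis (5) to write $\GDLh{g_{0}}{g_{1}}{h} \leq h\sum_{j} \bj L\lefri{\optgn\lefri{\cjh}, \doptgn\lefri{\cjh}}$, invoke the quadrature estimate in hypothesis (4) to replace the sum with $\int_{0}^{h} L\lefri{\optgn, \doptgn}\dt$ at a cost of $\QuadC h^{n+1}$, and then apply Lipschitz continuity from hypothesis (2) together with the chart-conditioning bounds from hypothesis (3) pointwise in $t$ to compare $L\lefri{\optgn, \doptgn}$ with $L\lefri{\trug, \dtrug}$; hypothesis (1) contributes a factor $h^{n}$ from the algebra-level approximation, so the time-integrated discrepancy picks up one extra $h$ and becomes $\mathcal{O}(h^{n+1})$, while $\int_{0}^{h}L\lefri{\trug,\dtrug}\dt = \EDLh{g_{0}}{g_{1}}{h}$. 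For the matching lower bound I would observe that $\galgn$ itself is a smooth $G$-valued curve with the correct endpoints, so by hypothesis (5) it is admissible in the continuous extremization, giving $\EDLh{g_{0}}{g_{1}}{h} \leq \int_{0}^{h} L\lefri{\galgn, \dgalgn}\dt$; applying hypothesis (4) directly to $\galgn$ (which is of the form $\LGFormf{\gzero}{\xi}$ with $\xi \in \AFdFSpace$) bounds this by $\GDLh{g_{0}}{g_{1}}{h} + \QuadC h^{n+1}$. Combining the two bounds yields the Lagrangian approximation estimate, and Theorem \ref{ErrorThm} converts this into the claimed $\mathcal{O}(h^{n+1})$ error of the induced variational integrator.

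I expect the main obstacle to be the pointwise-to-integrated comparison $\int_{0}^{h}|L(\optgn,\doptgn) - L(\trug,\dtrug)|\dt = \mathcal{O}(h^{n+1})$ in the upper bound; translating the purely algebra-side estimate of hypothesis (1) into a bound on Lagrangian values on the group requires exactly the chart-conditioning inequalities (\ref{CharCond1})--(\ref{CharCond2}), and without them the algebra-space approximation rate would not transfer to $G$. A secondary subtlety is the selection of $\optnla$ with matching endpoint values so that $\optgn$ is admissible in the Galerkin extremization: for interpolation-based approximation spaces such as Lagrange polynomials with endpoint nodes (the spaces used in \S \ref{CayCon} and \S \ref{NumExp}) this is automatic, but more generally it amounts to a mild strengthening of hypothesis (1) that can be arranged without sacrificing the approximation rate.
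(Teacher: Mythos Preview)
Your proposal is correct and follows essentially the same approach as the paper: establish $\left|\GDLh{g_{0}}{g_{1}}{h}-\EDLh{g_{0}}{g_{1}}{h}\right|\le C h^{n+1}$ by sandwiching the Galerkin discrete action between $\int_0^h L(\trug,\dtrug)\,\dt \pm \mathcal{O}(h^{n+1})$, using the minimizing property of $\galgn$ together with the quadrature bound and the Lipschitz/chart-conditioning estimates applied to the comparison curve $\optgn=\LGForm{\optnla}$ for the upper bound, and the minimizing property of $\trug$ together with the quadrature bound applied to $\galgn$ for the lower bound, then invoking Theorem~\ref{ErrorThm}. Your remark about needing $\optnla$ to match the endpoint values of $\trula$ so that $\optgn$ is admissible in the discrete extremization is a genuine subtlety that the paper's proof leaves implicit.
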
%
%
%
\begin{proof}%
We begin by rewriting the exact discrete Lagrangian and the Galerkin discrete Lagrangian:%
\begin{align*}
\left|\EDLh{g_{0}}{g_{1}}{h} - \GDLh{g_{0}}{g_{1}}{h}\right| &= \left|\int_{0}^{h}L\lefri{\trug,\dtrug} \dt - h\sum_{j=1}^{m}b_{j}L\lefri{\galgn\lefri{c_{j}h},\dgalgn\lefri{c_{j}h}}\right|,
\end{align*}%
where we have introduced \(\galgn\lefri{t}\), which is the stationary point of the local Galerkin action (\ref{LG_GalDiscAct}). We introduce the solution in the approximation space which takes the form \(\optgn\lefri{t} = \LGForm{\optnla\lefri{t}}\), and compare the action on the exact solution to the action on this solution:%
\begin{align*}
\left|\int_{0}^{h}L\lefri{\trug,\dtrug}\dt - \int_{0}^{h}L\lefri{\optgn,\doptgn}\dt\right| &= \left|\int_{0}^{h}L\lefri{\trug,\dtrug} - L\lefri{\optgn,\doptgn}\dt\right| \\
& \leq \int_{0}^{h}\left|L\lefri{\trug,\dtrug} - L\lefri{\optgn,\doptgn}\right|\dt.
\end{align*}%
Now, we use the Lipschitz assumption to establish the bound  
\begin{align*}
\int_{0}^{h}\left|L\lefri{\trug,\dtrug} - L\lefri{\optgn,\doptgn}\right|\dt & \leq \int_{0}^{h}\LagLipC \lefri{\GroupE{\trug}{\optgn} + \AlgeE{\dtrug}{\doptgn}}\dt \\
& = \int_{0}^{h} \LagLipC \left(\GroupE{\LGForm{\trula}}{\LGForm{\optnla}} + \right. \\
& \hspace{5em} \left. \AlgeE{\DLGForm{\trula}}{\DLGFormN{\optla}} \right) \dt,
\end{align*}%
and the chart conditioning assumptions to see
\begin{align*}
\int_{0}^{h}\left|L\lefri{\trug,\dtrug} - L\lefri{\optgn,\doptgn}\right|\dt & \leq \int_{0}^{h} \LagLipC \left(\GroupC\RMetric{\trula - \optnla}{\trula - \optnla}^{\frac{1}{2}} + \AlgeC\RMetric{\dtrula - \doptnla}{\dtrula - \doptnla}^{\frac{1}{2}} + \right. \\
& \hspace{5em} \left. \phantom{\RMetric{\dtrula - \doptnla}{\dtrula - \doptnla}^{\frac{1}{2}}} \AlgeGC\RMetric{\trula - \optla}{\trula - \optla}^{\frac{1}{2}}\right) \dt \\
& \leq \int_{0}^{h} \LagLipC \lefri{\GroupC \ApproxC h^{n} + \AlgeC \ApproxCi h^{n} + \AlgeGC \ApproxC h^{n}} \dt \\
& = \LagLipC\lefri{\lefri{\GroupC + \AlgeGC}\ApproxC + \AlgeC \ApproxCi}h^{n+1}.
\end{align*}%
This establishes a bound between the action evaluated on the exact discrete Lagrangian and the optimal solution in the approximation space, \(\optgn\). Considering the Galerkin discrete action,
\begin{align}
h\sum_{j=1}^{m} b_{j}L\lefri{\galgn,\galgn} & \leq h \sum_{j=1}^{m} b_{j}L\lefri{\optgn,\doptgn} \nonumber \\  
& \leq \int_{0}^{h}L\lefri{\optgn,\doptgn}\dt + \QuadC h^{n+1} \nonumber \\
& \leq \int_{0}^{h}L\lefri{\trug,\dtrug}\dt + \QuadC h^{n+1} + \LagLipC\lefri{\lefri{\GroupC + \AlgeGC}\ApproxC + \AlgeC\ApproxCi}h^{n+1} \label{OptIneq1}
\end{align}%
where we have used the assumption that the Galerkin approximation \(\galgn\) minimizes the Galerkin discrete action and the assumption on the accuracy of the quadrature. Now, using the fact that \(\trug\lefri{t}\) minimizes the action and that \(\GalSpace \subset \CG\),
\begin{align}
h\sum_{j=1}^{m}b_{j}L\lefri{\galgn,\dgalgn} & \geq \int_{0}^{h}L\lefri{\galgn,\dgalgn}\dt - \QuadC h^{n+1} \nonumber \\
& \geq \int_{0}^{h} L\lefri{\trug,\trug}\dt - \QuadC h^{n+1} \label{OptIneq2}
\end{align}%
Combining inequalities (\ref{OptIneq1}) and (\ref{OptIneq2}), we see that,%
\begin{align*}
\int_{0}^{h}L\lefri{\trug,\dtrug} \dt - \QuadC h^{n+1} \leq h \sum_{j=1}^{m}b_{j}L\lefri{\galgn,\dgalgn} \leq \int_{0}^{h} L\lefri{\trug,\dtrug} \dt + \QuadC h^{n+1} + \LagLipC\lefri{\lefri{\GroupC + \AlgeGC} \ApproxC + \AlgeC \ApproxCi}h^{n+1}
\end{align*}%
which implies%
\begin{align}
\left|\int_{0}^{h} L\lefri{\trug,\dtrug} \dt - h \sum_{j=1}^{m}L\lefri{\galgn,\dgalgn}\right| \leq \lefri{\QuadC + \LagLipC\lefri{\lefri{\GroupC + \AlgeGC}\ApproxC + \AlgeC\ApproxCi}} h^{n+1} \label{OptIneq3}
\end{align}%
The left hand side of (\ref{OptIneq3}) is exactly \(\left|\EDLh{g_{0}}{g_{1}}{h} - \GDLh{g_{0}}{g_{1}}{h}\right|\), and thus%
\begin{align*}
\left|\EDLh{g_{0}}{g_{1}}{h} - \GDLh{g_{0}}{g_{1}}{h}\right| \leq \OptC h^{n+1}.
\end{align*}%
where%
\begin{align*}
\OptC &= \QuadC + \LagLipC\lefri{\lefri{\GroupC + \AlgeGC}\ApproxC + \AlgeC\ApproxCi}.
\end{align*}
This states that the Galerkin discrete Lagrangian approximates the exact discrete Lagrangian with error \(\mathcal{O}\lefri{h^{n+1}}\), and by Theorem (\ref{ErrorThm}) this further implies that the Lagrangian update map, and hence the Lie group Galerkin variational integrator has error \(\mathcal{O}\lefri{h^{n+1}}\).
\end{proof}%
%
%
\subsubsection{Geometric Convergence} \label{SecGeoConv} Under similar assumptions, we can demonstrate that Lie group spectral variational integrators will converge geometrically with \(n\)-refinement, that is, enrichment of the function space \(\GalSpace\) as opposed to the shortening of the time step, \(h\).
\begin{theorem}\label{LG_GeoConv} Given an interval \(\left[0,h\right]\), and a Lagrangian \(L:TG \rightarrow \mathbb{R}\), suppose that \(\trug\lefri{t}\) solves the Euler-Lagrange equations on that interval exactly. Furthermore, suppose that the exact solution \(\trug\lefri{t}\) falls within the range of the natural chart, that is:%
\begin{align*}
\trug\lefri{t} = \LGForm{\trula\lefri{t}}
\end{align*}
for some \(\trula \in C^{2}\lefri{\left[0,h\right],\mathfrak{g}}\). For the function space \(\AFdFSpace\) and the quadrature rule \(\mathcal{G}\), define the Galerkin discrete Lagrangian \(\GDL{g_{0}}{g_{1}} \rightarrow \mathbb{R}\) as%
\begin{align}
\GDLh{g_{0}}{g_{1}}{h} = \ext_{\Ggalargsf{g_{0}}{g_{1}}} h \sum_{j=1}^{m}b_{j}L\lefri{g_{n}\lefri{c_{j}h},\dot{g}_{n}\lefri{c_{j}h}} = h \sum_{h=1}^{m}b_{j}L\lefri{\galgn\lefri{c_{j}h},\dgalgn\lefri{c_{j}h}} \label{DiscAct}
\end{align}%
where \(\galgn\lefri{t}\) is the extremizing curve in \(\GalSpace\). If:
\begin{enumerate}
\item there exists an approximation \(\optnla \in \AFdFSpace\) such that,%
\begin{align*} 
\RMetric{\trula - \optnla}{\trula - \optnla}^{\frac{1}{2}} & \leq \ApproxC \ApproxK^{n}\\
\RMetric{\dtrula - \doptnla}{\dtrula - \doptnla}^{\frac{1}{2}} &\leq \ApproxCi \ApproxK^{n},
\end{align*}%
for some constants \(\ApproxC \geq 0\) and \(\ApproxCi \geq 0\), \(0 < \ApproxK < 1\) independent of \(n\),
\item the Lagrangian \(L\) is Lipschitz in the chosen error norm in both its arguments, that is:%
\begin{align*}
\left|L\lefri{g_{1},\dot{g}_{1}} - L\lefri{g_{2},\dot{g}_{2}}\right| \leq \LagLipC \lefri{\GroupE{g_{1}}{g_{2}} + \AlgeE{\dot{g}_{1}}{\dot{g}_{2}}} 
\end{align*}%
\item the chart function \(\Phi\) is well conditioned in \(\GroupE{\cdot}{\cdot}\) and \(\AlgeE{\cdot}{\cdot}\), that is (\ref{CharCond1}) and (\ref{CharCond2}) hold,
\item there exists a sequence of quadrature rules \(\left\{\mathcal{G}_{n}\right\}_{n=1}^{\infty}\), \(\mathcal{G}_{n}\lefri{f} = h \sum_{j=1}^{m_{n}}\bnj f\lefri{\cnjh} \approx \int_{0}^{h} f\lefri{t}\dt\), and there exists a constant \(0 < \QuadK < 1\) independent of \(n\) such that,%
\begin{align*}
\left|\int_{0}^{h}L\lefri{g_{n}\lefri{t},\dot{g}_{n}\lefri{t}}\dt - h \sum_{j=1}^{m}b_{j}L\lefri{g_{n}\lefri{c_{j}h},\dot{g}_{n}\lefri{c_{j}h}}\right| \leq \QuadC \QuadK^{n}
\end{align*}%
for any \(g_{n}\lefri{t} = L_{g_{0}}\Phi\lefri{\xi\lefri{t}}\) where \(\xi \in \AFdFSpace\),
\item the stationary points of the discrete action and the continuous action are minimizers,
\end{enumerate}
then the variational integrator induced by \(\GDL{g_{0}}{g_{1}}\) has error \(\mathcal{O}\lefri{K^{n}}\).
\end{theorem}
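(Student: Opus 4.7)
The plan is to mirror the proof of Theorem \ref{LG_OptConv} almost step by step, substituting the geometric rates $\ApproxK^n$ and $\QuadK^n$ for the polynomial rates $h^n$ and $h^{n+1}$ in the corresponding bounds. The target quantity is again $|\EDLh{g_0}{g_1}{h} - \GDLh{g_0}{g_1}{h}|$, where $\EDLn$ is the action on the true solution $\trug = \LGForm{\trula}$ and $\GDLn$ is the discrete action evaluated on the Galerkin extremizer $\galgn$.

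First I would introduce an intermediate reference curve $\optgn(t) = \LGForm{\optnla(t)}$ built from the approximation $\optnla$ whose existence is guaranteed by assumption (1). Next I would bound $|L(\trug,\dtrug) - L(\optgn,\doptgn)|$ by applying the Lipschitz assumption (2) and then invoking the chart-conditioning inequalities (\ref{CharCond1}) and (\ref{CharCond2}) from assumption (3) to reduce everything to Riemannian distances between $\trula$ and $\optnla$ (and their derivatives) in the Lie algebra. Applying the approximation bound (1) yields a bound of the form
\begin{align*}
\int_{0}^{h} |L(\trug,\dtrug) - L(\optgn,\doptgn)|\,\dt \leq h\LagLipC\bigl((\GroupC + \AlgeGC)\ApproxC + \AlgeC \ApproxCi\bigr)\ApproxK^{n}.
\end{align*}
Then the same two-sided sandwich used in the optimal-convergence proof applies: the Galerkin minimization property and the quadrature bound (4) give one inequality, while continuous minimization (assumption (5)) plus the quadrature bound on $\galgn$ give the reverse, yielding
\begin{align*}
\bigl|\EDLh{g_0}{g_1}{h} - \GDLh{g_0}{g_1}{h}\bigr| \leq \QuadC\QuadK^{n} + h\LagLipC\bigl((\GroupC + \AlgeGC)\ApproxC + \AlgeC\ApproxCi\bigr)\ApproxK^{n} \leq C\,K^{n},
\end{align*}
where $K = \max(\QuadK,\ApproxK) < 1$ and $C$ collects the constants.

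The main obstacle is the final step: translating a geometric bound on $|L_d - L_d^E|$ into a geometric bound on the integrator itself. Theorem \ref{ErrorThm} is stated for polynomial orders, so I cannot simply quote it. The natural resolution is to observe that a geometric bound $C K^n$ trivially implies an order-$p$ bound for every fixed $p$ (with a constant depending on $h$ and $p$), so the argument behind Theorem \ref{ErrorThm} applies at each polynomial order; passing to the limit one can either (a) show via the implicit function theorem applied to the discrete Euler--Lagrange equations $D_1 L_d + D_2 L_d = 0$ that the extremizing update map inherits the $\mathcal{O}(K^n)$ perturbation from $L_d^E$, using nondegeneracy of the Hessian of the exact discrete Lagrangian, or (b) appeal directly to the variational order analysis applied uniformly in $n$. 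I would use (a), since the regularity hypotheses needed (smoothness of $L$ and invertibility of the appropriate second derivative of $L_d^E$) are already implicit in the surrounding framework, and this yields $|\galgn(h) - \trug(h)| = \mathcal{O}(K^n)$ as required.
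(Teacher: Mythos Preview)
Your proposal is correct and follows essentially the same route as the paper's proof (given in the appendix): introduce $\optgn = \LGForm{\optnla}$, bound the action difference via Lipschitz plus chart-conditioning plus the approximation estimate, then sandwich the Galerkin discrete action between two copies of the exact action using the minimization assumptions and the quadrature bound, and set $K = \max(\ApproxK,\QuadK)$. The only difference is that the paper simply invokes Theorem~\ref{ErrorThm} directly for the final step without the implicit-function-theorem justification you outline; your extra caution there is warranted but not something the paper addresses.
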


The proof for this theorem is very similar to that for Theorem \ref{LG_OptConv}, using the modified assumptions in the obvious way. It would be tedious to repeat it here, but it has been included in the appendix for completeness.

These proofs may seem quite strong in their assumptions. However, as we shall see in \S \ref{CayCon}, for many Lagrangians, there are many reasonable choices of function spaces, natural chart functions, quadrature rules and error norms such that the assumptions are satisfied. We will specifically examine Lagrangians over \(\SoT\) of the form:%
\begin{align*}
L\lefri{R,\dot{R}} = \mbox{tr}\lefri{\dot{R}^{T}RJ_{d}R^{T}\dot{R}} - V\lefri{R}, 
\end{align*}%
which is the rigid body under the influence of a potential. We will show that for Lie group Galerkin variational integrators, stationary points of the discrete action are minimizers under a certain time step restriction. In addition we will give a specific construction of a Lie group Galerkin variational integrator for this type of problem, and demonstrate the expected convergence on several example problems. 

\subsection{Stationary Points are Minimizers}

A major assumption in both Theorem \ref{LG_OptConv} and Theorem \ref{LG_GeoConv} is that the stationary point of the discrete action is a minimizer. While in general this may not hold, we can show that given a time step restriction on \(h\), that this condition holds for problems on \(\SoT\) for Lagrangians of the form%
\begin{align*}
L\lefri{R,\dot{R}} = \mbox{tr}\lefri{\dot{R}^{T}RJ_{d}R^{T}\dot{R}} - V\lefri{R}. 
\end{align*}%
This includes a broad range of problems. Furthermore, we establish a similar result for problems in vector space in \citet{HaLe2012}, and it may be possible to combine these two results to include a large class of problems, including those that evolve on the special Euclidean group \(SE\lefri{3} = \mathbb{R}^{3} \ltimes \SoT\).

\begin{lemma} \label{StaMini} Consider a Lagrangian on \(\SoT\) of the form%
\begin{align*} 
L\lefri{R,\dot{R}} = \mbox{tr}\lefri{\dot{R}^{T}RJ_{d}R^{T}\dot{R}} - V\lefri{R}. 
\end{align*}
If a Lie group Galerkin variational integrator is constructed with \(\left\{\phi_{i}\right\}_{i=1}^{n}\) forming the basis for polynomials of degree \(n+1\) and the quadrature rule is of order at least \(2n + 2\), then the stationary points of the discrete action are minimizers.
\end{lemma}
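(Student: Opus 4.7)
My plan is to show that the Hessian of the discrete action at a stationary point, viewed as a bilinear form on the finite-dimensional space of internal-stage variations, is positive definite whenever $h$ is small enough; this follows the familiar short-interval argument in which a kinetic $1/h^{2}$-coercivity overwhelms an $\mathcal{O}(1)$ potential perturbation.

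First I would split $\mathbb{S}_{d} = S_{T} - S_{V}$ along the kinetic and potential parts of $L$. Via the left trivialization $R = \gk \Phi(\xi)$, the body angular velocity $\Omega := R^{T}\dot R$ depends only on $(\xi,\dot\xi)$, so the kinetic energy becomes $T(\xi,\dot\xi) = \mathrm{tr}(\Omega^{T}J_{d}\Omega)$, a quadratic form in $\dot\xi$ whose Hessian $\partial^{2}_{\dot\xi\dot\xi}T$ admits a uniform positive-definite lower bound $\mu_{T} > 0$ on the chart (using $J_{d} > 0$ and that $D\Phi(\xi)$ is a linear isomorphism). For an admissible variation $\delta\xi$---a polynomial of degree $\leq n+1$ vanishing at $t=0,h$---I would compute the second variation at the stationary curve $\xi_{*}$, bounding the leading $\partial^{2}_{\dot\xi\dot\xi}T$ block below by $\mu_{T}|\delta\dot\xi|^{2}$ and the mixed and pure-$\delta\xi$ contributions above by $C(|\delta\xi|\,|\delta\dot\xi| + |\delta\xi|^{2})$. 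Since $\delta\dot\xi$ is a polynomial of degree $\leq n$ and $\delta\xi$ of degree $\leq n+1$, the quantities $|\delta\dot\xi|^{2}$ and $|\delta\xi|^{2}$ are polynomials in $t$ of degree at most $2n$ and $2n+2$, both integrated exactly by the order-$(2n+2)$ quadrature, so the Riemann sums equal the true integrals. A Poincar\'{e} inequality on $[0,h]$ for polynomials vanishing at the endpoints then gives $\int_{0}^{h}|\delta\dot\xi|^{2}\,dt \geq C_{p}h^{-2}\int_{0}^{h}|\delta\xi|^{2}\,dt$, yielding
\begin{align*}
\delta^{2}S_{T} \;\geq\; \Bigl(\tfrac{\mu_{T}C_{p}}{2h^{2}} - C'\Bigr)\int_{0}^{h}|\delta\xi|^{2}\,dt.
\end{align*}
Smoothness of $V\circ L_{\gk}\circ\Phi$ together with the same exactness argument yields $|\delta^{2}S_{V}| \leq C_{V}\int_{0}^{h}|\delta\xi|^{2}\,dt$, so combining the two gives $\delta^{2}\mathbb{S}_{d} > 0$ under the time-step restriction $h^{2} < \mu_{T}C_{p}/(2(C'+C_{V}))$. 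Hence every stationary point is a strict local minimizer on its chart neighborhood.

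The principal obstacle is the $\xi_{*}$-dependence of the kinetic coefficient $\partial^{2}_{\dot\xi\dot\xi}T(\xi_{*}(t),\cdot)$, which is not polynomial in $t$ (e.g., it is rational when $\Phi$ is the Cayley transform) and therefore causes the full Hessian integrand to lie outside the polynomial class integrated exactly by the quadrature. The remedy is to replace this coefficient by its pointwise lower bound $\mu_{T}$ \emph{before} invoking quadrature exactness, so that only the genuinely polynomial quantities $|\delta\dot\xi|^{2}$ and $|\delta\xi|^{2}$ need be integrated exactly; the error committed in this replacement, together with cross-derivative terms and $C^{2}$-bounds on $V$, is absorbed into $C'$ and $C_{V}$, and is dominated by the kinetic coercivity after the time-step restriction.
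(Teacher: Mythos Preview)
Your proposal is correct and follows essentially the same route as the paper: obtain a pointwise lower bound on the kinetic Hessian of the form $C_{\dot\xi}\,|\delta\dot\xi|^{2}-C_{\xi}\,|\delta\xi|^{2}$ (the paper does this by completing the square rather than Young's inequality), bound the potential Hessian by $C_{V}|\delta\xi|^{2}$, then use positivity of the quadrature weights and exactness of the order-$(2n+2)$ rule on the polynomial quantities $|\delta\xi|^{2},|\delta\dot\xi|^{2}$ to pass to integrals, and finish with the Poincar\'e inequality and a time-step restriction. Your identification of the ``obstacle'' and its remedy---replacing the non-polynomial Hessian coefficients by uniform constants \emph{before} invoking quadrature exactness---is exactly the mechanism the paper uses, though it does not call attention to it explicitly.
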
%
\begin{proof} We begin by noting that we can identify every element of \(\soT\), the Lie algebra associated with \(\SoT\), with an element of \(\mathbb{R}^{3}\) using the \emph{hat map} \(\hat{\cdot}:\mathbb{R}^{3} \rightarrow \soT\),
\begin{align}
\widehat{\left(\begin{array}{c} a\\b\\c \end{array}\right)} = \left(\begin{array}{ccc} 0 & -c & b \\ c & 0 & -a\\ -b & a & 0\end{array}\right). \label{hatmap}
\end{align}
Hence, it is natural to consider the discrete action as a function on \(H^{1}\lefri{\left[0,h\right],\mathbb{R}^{3}}\),%
\begin{align*}
\mathbb{S}_{d}\lefri{\xi\lefri{t},\dot{\xi}\lefri{t}} = h \sum_{j=1}^{m} b_{j} L\lefri{L_{\gk}\Phi\lefri{\hat{\xi}\lefri{\cjh}},\frac{d}{\dt}L_{\gk}\Phi\lefri{\hat{\xi}\lefri{\cjh}}}.
\end{align*}%
Let \(\lgstat\lefri{t}\) be the stationary point of \(\mathbb{S}_{d}\). Now, consider a perturbation to \(\lgstat\lefri{t}\), \(\lgstat\lefri{t} + \delta \xi\lefri{t}\). Since \(\lgstat\lefri{t}\) is the extremizer over curves \(\xi\lefri{t}\) subject to the constraints \(\xi\lefri{0} = \xi_{0}\), \(\xi\lefri{h} = \xi_{1}\), we know \(\delta \xi\lefri{0} = 0\) and \(\delta \xi \lefri{h} = 0\), but it is otherwise arbitrary. Hence, we consider an arbitrary perturbation \(\delta\xi\lefri{t} \in H^{1}_{0}\lefri{\left[0,h\right],\mathbb{R}^{3}}\). Since \(\mathbb{S}_{d}\) is a function on \(H^{1}\lefri{\left[0,h\right],\mathbb{R}^{3}}\), we can Taylor expand around the stationary point:%
\begin{align*}
\mathbb{S}_{d}\lefri{\lgstat + \delta \xi,\dlgstat + \delta \dot{\xi}} &= \mathbb{S}_{d}\lefri{\lgstat,\dlgstat} + D\mathbb{S}_{d}\lefri{\lgstat,\dlgstat}\left[\lefri{\delta \xi, \delta \dot{\xi}}\right] + \frac{1}{2}D^{2} \mathbb{S}_{d}\lefri{\eta,\dot{\eta}}\left[\lefri{\delta \xi,\delta \dot \xi}\right]\left[\lefri{\delta \xi,\delta \dot \xi}\right]
\end{align*}%
where \(\eta\lefri{t} = \lambda\lefri{t}\xi_{0}\lefri{t} + \lefri{1 - \lambda\lefri{t}}\delta \xi\lefri{t}\) for some \(\lambda\lefri{t}: \left[0,h\right] \rightarrow \left[0,1\right]\) and \(D \mathbb{S}_{d}\), \(D^{2} \mathbb{S}_{d}\) are the first and second Frechet derivative of \(\mathbb{S}_{d}\), respectively. Thus
\begin{align*}
\mathbb{S}_{d}\lefri{\lgstat + \delta \xi,\dlgstat + \delta \dot{\xi}} - \mathbb{S}_{d}\lefri{\lgstat,\dlgstat} &=  D \mathbb{S}_{d}\lefri{\lgstat,\dlgstat}\left[\lefri{\delta \xi, \delta \dot{\xi}}\right] + \frac{1}{2}D^{2} \mathbb{S}_{d}\lefri{\eta,\dot{\eta}}\left[\lefri{\delta \xi,\delta \dot \xi}\right]\left[\lefri{\delta \xi,\delta \dot \xi}\right].
\end{align*}%
Now, note that%
\begin{align*}
D \mathbb{S}_{d}\lefri{\lgstat,\dlgstat}\left[\lefri{\delta \xi, \delta \dot{\xi}}\right] = 0
\end{align*}
is exactly the stationarity conditions for the internal stage discrete Euler-Poincar\'{e} equations. Thus,%
\begin{align*}
\mathbb{S}_{d}\lefri{\lgstat + \delta \xi,\dlgstat + \delta \dot{\xi}} - \mathbb{S}_{d}\lefri{\lgstat,\dlgstat} &= \frac{1}{2}D^{2} \mathbb{S}_{d}\lefri{\eta,\dot{\eta}}\left[\lefri{\delta \xi,\delta \dot \xi}\right]\left[\lefri{\delta \xi,\delta \dot \xi}\right].
\end{align*}%
We will examine \(D^{2} \mathbb{S}_{d}\). The second Frechet derivative of the discrete action is given by
\begin{align*}
D^{2}\mathbb{S}_{d}\lefri{\xi,\dot{\xi}}\left[\lefri{\delta \xi_{a},\delta \dot{\xi}_{a}}\right]\left[\lefri{\delta \xi_{b}, \delta \dot{\xi}_{b}}\right] = h\sum_{j=1}^{m}\bj \nabla^{2}L\lefri{\xi,\dot{\xi}}\left[\lefri{\delta \xi_{a}\lefri{\cjh},\delta \dot{\xi}_{a}\lefri{\cjh}}\right]\left[\lefri{\delta \xi_{b}\lefri{\cjh}, \delta \dot{\xi}_{b}\lefri{\cjh}}\right].
\end{align*}
In order to examine the second Frechet derivative, we must examine the Hessian of the Lagrangian. We will do this term-wise. The Lagrangian has the form%
\begin{align*}
L\lefri{\xi,\dot{\xi}} = K\lefri{\xi,\dot{\xi}} - V\lefri{\xi}
\end{align*}
where%
\begin{align*}
K\lefri{\lgc,\dlgc} = \dR{\lgc}^{T}\R{\lgc}J_{d}\R{\lgc}^{T}\dR{\lgc}. 
\end{align*}%
is the kinetic energy and \(V\) is the potential energy. Considering \(K\), note that
\begin{align*}
\R{\lgc}^{T}\dR{\lgc} &= \Phi\lefri{\lgc}^{T}\nabla\Phi\lefri{\lgc}\dlgc
\end{align*}%
and hence as a function of \(\dlgc\),%
\begin{align*}
K\lefri{\dlgc} &= \dlgc^{T}\nabla\Phi\lefri{\lgc}^{T}\Phi\lefri{\lgc}J_{d}\Phi\lefri{\lgc}^{T}\nabla\Phi\lefri{\lgc}\dlgc.
\end{align*}
\(J_{d}\) is a diagonal matrix with \(\lefri{J_{1},J_{2},J_{3}}\) on the diagonal, and because \(\Phi\lefri{\lgc}\) is an orthogonal matrix, \(\Phi\lefri{\lgc}J_{d}\Phi\lefri{\lgc}^{T}\) has the eigenvalues \(\lefri{J_{1},J_{2},J_{3}}\). Furthermore, \(\Phi\lefri{\cdot}\) is a diffeomorphism, which implies \(\nabla \Phi\lefri{\cdot}\) is non-singular, so %
\begin{align*}
\dlgc^{T}\nabla\Phi\lefri{\lgc}^{T}\Phi\lefri{\lgc}J_{d}\Phi\lefri{\lgc}^{T}\nabla\Phi\lefri{\lgc}\dlgc \geq J_{\min} \MNorm{\nabla \Phi\lefri{\lgc} \dlgc}{2}^{2} \geq J_{\min}\left|\sigma_{\min}\lefri{t}\right|\MNorm{\dlgc}{2}^{2}
\end{align*}
where \(J_{\min} = \min\lefri{\left\{J_{1},J_{2},J_{3}\right\}}\) and \(\sigma_{\min}\lefri{t}\) is the value of \(\nabla \Phi\lefri{\dlgc}\) with smallest magnitude. Since \(\left|\sigma_{\min}\lefri{t}\right|\) is a continuous function of \(t\) and \(\left|\sigma_{\min}\lefri{t}\right| > 0\) for all \(t\)  over the compact interval \(\left[0,h\right]\), there exists a constant \(\sigmaC > 0\) such that \(\left|\sigma_{\min}\lefri{t}\right| > \sigmaC\) for all \(t \in \left[0,h\right]\). Finally, we note%
\begin{align*}
\ddKdddq\lefri{\ilgc,\dilgc} \left[\delta \dot{\xi}_{a}\right]\left[\delta \dot{\xi}_{b}\right] = 2\delta\dot{\xi}^{T}_{a}\nabla\Phi\lefri{\ilgc}^{T}\Phi\lefri{\ilgc}J_{d}\Phi\lefri{\ilgc}^{T}\nabla\Phi\lefri{\ilgc}\delta\dot{\xi}_{b},
\end{align*}%
and hence
\begin{align}
\ddKdddq\lefri{\ilgc,\dilgc}\left[\delta \dot{\xi}\right]\left[\delta \dot{\xi}\right] \geq 2J_{\min}\sigmaC \delta \dot{\xi}^{T}\delta\dot{\xi}. \label{MinBound_ddq}
\end{align}%
Now, considering the full term \(\nabla^{2}K\lefri{\ilgc,\dilgc} \left[\lefri{\delta \xi, \delta \dot{\xi}}\right] \left[\lefri{\delta \xi, \delta \dot{\xi}}\right]\), we see that:%
\begin{align}
 \nabla^{2}K\lefri{\ilgc,\dilgc} \left[\lefri{\delta \xi, \delta \dot{\xi}}\right] \left[\lefri{\delta \xi, \delta \dot{\xi}}\right] &= \ddKddq\lefri{\ilgc,\dilgc} \left[\delta \xi\right] \left[\delta \xi \right] + 2 \ddKddqdq\lefri{\ilgc,\dilgc} \left[\delta \xi\right] \left[\delta \dot{\xi}\right]  \label{KineticExpansion}\\
&\hspace{5em} +\ddKdddq\lefri{\ilgc,\dilgc} \left[\delta \dot{\xi}\right]\left[\delta \dot{\xi}\right], \nonumber
\end{align}%
where we have made use of the symmetry of mixed second derivatives. \(K\) is smooth in all of its components, and hence there exists \(\mixedC > 0\), \(\ddlgC > 0\) such that%
\begin{align}
\ddKddqdq\lefri{\ilgc,\dilgc} \left[\delta \xi\right]\left[\delta \dot{\xi}\right] &\geq -\mixedC \delta \xi^{T} \delta \dot{\xi} \label{MaxBound_dqddq}\\
\ddKddq\lefri{\ilgc,\dilgc} \left[\delta \xi\right]\left[\delta \xi\right] &\geq -\ddlgC \delta \xi^{T} \delta \xi \label{MaxBound_dq}.
\end{align}%
Defining \(\derivC = 2J_{\min}C_{\sigma}\), inserting (\ref{MinBound_ddq}), (\ref{MaxBound_dqddq}), and (\ref{MaxBound_dq}) into (\ref{KineticExpansion}) gives%
\begin{align*}
 \nabla^{2}K\lefri{\ilgc,\dilgc} \left[\lefri{\delta \xi, \delta \dot{\xi}}\right] \left[\lefri{\delta \xi, \delta \dot{\xi}}\right] &\geq \derivC\deldlgc^{T}\deldlgc - \mixedC\deldlgc^{T}\dellgc - \ddlgC\dellgc^{T}\dellgc \\
&= \frac{\derivC}{2}\deldlgc^{T}\deldlgc + \frac{\derivC}{2}\deldlgc^{T}\deldlgc - \mixedC\deldlgc^{T}\dellgc \\
& \hspace{5em} - \ddlgC\dellgc^{T}\dellgc.
\end{align*}%
Completing the square, we see that%
\begin{align*}
&\frac{\derivC}{2}\deldlgc^{T}\deldlgc + \frac{\derivC}{2}\deldlgc^{T}\deldlgc - \mixedC\deldlgc^{T}\dellgc - \ddlgC\dellgc^{T}\dellgc  \\
=&\frac{\derivC}{2}\deldlgc^{T}\deldlgc + \lefri{\frac{\sqrt{\derivC}}{\sqrt{2}}\deldlgc - \frac{\sqrt{2}\mixedC}{2\sqrt{\derivC}}\dellgc}^{T}\lefri{\frac{\sqrt{\derivC}}{\sqrt{2}}\deldlgc - \frac{\sqrt{2}\mixedC}{2\sqrt{\derivC}}\dellgc} \\
& \hspace{5em} - \lefri{\ddlgC + \frac{\mixedC^{2}}{2\derivC}}\dellgc^{T}\dellgc\\
=&\frac{\derivC}{2}\MNorm{\deldlgc}{2}^{2} + \MNorm{\frac{\sqrt{\derivC}}{\sqrt{2}}\deldlgc - \frac{\sqrt{2}\mixedC}{2\sqrt{\derivC}}\dellgc}{2}^{2} - \lefri{\ddlgC + \frac{\mixedC^{2}}{2\derivC}}\MNorm{\dellgc}{2}^{2}.
\end{align*}
Making use of the trivial bound that for any \(a, b \in \mathbb{R}^{3}\), \(\MNorm{a - b}{2}^{2} \geq 0\), we see%
\begin{align*}
&\frac{C_{p}}{2}\MNorm{\deldlgc}{2}^{2} + \MNorm{\frac{\sqrt{\derivC}}{\sqrt{2}}\deldlgc - \frac{\sqrt{2}\mixedC}{2\sqrt{\derivC}}\dellgc}{2}^{2} -\lefri{\ddlgC + \frac{\mixedC^{2}}{2\derivC}}\MNorm{\dellgc}{2}^{2}\\
\geq &\frac{\derivC}{2}\MNorm{\deldlgc}{2}^{2} - \lefri{\ddlgC + \frac{\mixedC^{2}}{2\derivC}}\MNorm{\dellgc}{2}^{2}\\
=& \dboundC \deldlgc^{T}\deldlgc - \cboundC \dellgc^{T}\dellgc
\end{align*}%
for constants \(\dboundC > 0\), \(\cboundC > 0\), where
\begin{align*}
\dboundC &= \frac{\derivC}{2}\\
\cboundC &= \ddlgC + \frac{\mixedC^{2}}{2\derivC}.
\end{align*}
This bound allows us to conclude%
\begin{align}
 \nabla^{2}K\lefri{\ilgc,\dilgc} \left[\lefri{\delta \xi, \delta \dot{\xi}}\right] \left[\lefri{\delta \xi, \delta \dot{\xi}}\right] &\geq \dboundC\dlgc^{T}\dlgc - \cboundC\lgc^{T}\lgc. \label{KinBound}
\end{align}%
We now turn our attention to the potential term, \(V\lefri{\R{\lgc}}\). Since \(V\) and \(\R{\cdot}\) are both smooth we know that the second partial derivatives of \(V\lefri{\R{\cdot}}\) are bounded, and since \(V\) does not depend on \(\dlgc\),%
\begin{align}
\nabla^{2} V\lefri{R\lefri{\ilgc}}\left[\lefri{\dellgc,\deldlgc}\right]\left[\lefri{\dellgc,\deldlgc}\right] &\leq \potentC \dellgc^{T}\dellgc \label{PotentBound}
\end{align}%
for a constant \(\potentC\). Thus, combining (\ref{KinBound}) and (\ref{PotentBound}), we can bound \(\nabla^{2}\mathbb{S}_{d}\),
\begin{align*}
&\nabla^{2}\mathbb{S}_{d}\lefri{\ilgc,\dilgc}\left[\lefri{\dellgc,\deldlgc}\right]\left[\lefri{\dellgc,\deldlgc}\right] \geq  \\
& \hspace{5em} h\sum_{j=1}^{m} \bj \dboundC \qdeldlgc^{T}\qdeldlgc - \lefri{\cboundC + \potentC} \qdellgc^{T}\qdellgc. 
\end{align*}%
Since, by assumption, \(\dellgc\) and \(\deldlgc\) are polynomials of degree at most \(n+1\), \(\dellgc^{T}\dellgc\) and \(\deldlgc^{T}\deldlgc\) is a polynomial of degree at most \(2n + 2\), so the quadrature rule is exact, and thus%
\begin{align}
&h \sum_{j=1}^{m}\bj \dboundC \qdeldlgc^{T}\qdeldlgc - \lefri{\cboundC + \potentC}\qdellgc^{T}\qdellgc = \label{QuadtoInt} \\
& \hspace{5em} \dboundC \int_{0}^{h}\deldlgc^{T}\deldlgc\dt - \lefri{\cboundC + \potentC}\int_{0}^{h}\dellgc^{T}\dellgc \dt. \nonumber
\end{align}%
\(\delta \lgc \in H_{0}^{1}\lefri{\left[0,h\right],\mathbb{R}^{3}}\), so we can apply the Poincar\'{e} inequality to see%
\begin{align*}
&\dboundC \int_{0}^{h}\deldlgc^{T}\deldlgc\dt - \lefri{\cboundC + \potentC}\int_{0}^{h}\dellgc^{T}\dellgc \dt \\
\geq &\frac{\dboundC \pi}{h^{2}}\int_{0}^{h}\dellgc^{T}\dellgc \dt -\lefri{\cboundC + \potentC} \int_{0}^{h}\dellgc^{T}\dellgc \dt \\
=& \lefri{\frac{\dboundC \pi}{h^{2}} - \lefri{\cboundC + \potentC}}\int_{0}^{h} \lgc^{T}\lgc \dt 
\end{align*}%
which is positive so long as \(h < \sqrt{\frac{\dboundC \pi}{\cboundC + \potentC}}\). Thus, given that \(h < \sqrt{\frac{\dboundC \pi}{\cboundC + \potentC}}\), for arbitrary \(\lefri{\dellgc,\deldlgc}\)%
\begin{align*}
\mathbb{S}_{d}\lefri{\lgstat\lefri{t} + \dellgc, \dlgstat\lefri{t} + \deldlgc} - \mathbb{S}_{d}\lefri{\lgstat\lefri{t},\dlgstat\lefri{t}} > 0%
\end{align*}
which demonstrates that \(\lefri{\lgstat\lefri{t},\dlgstat\lefri{t}}\) minimizes the action.
\end{proof}

It should be noted that the only use of the assumption that the approximation space is polynomials of order at least \(n\) is when we use the order of the quadrature rule to change the quadrature to the exact integral (\ref{QuadtoInt}). Thus, this proof can easily be generalized to other approximation spaces, so long as the quadrature rule used is exact for the product of any two elements of the approximation space and the product of any two derivatives of the elements of the approximation space.

\subsection{Convergence of Galerkin Curves}

Lie group Galerkin variational integrators require the construction of a curve%
\begin{align*}
\galgn\lefri{t} \in \GalSpacef{\gk}
\end{align*}%
such that%
\begin{align*}
\galgn\lefri{t} = \argext_{\Ggalargsf{g_{k}}{g_{k+1}}} \sum_{j=1}^{m} \bj L\lefri{g_{n}\lefri{\cjh},\dot{g}_{n}\lefri{\cjh}}.
\end{align*}%
This curve, which we shall refer to as the \emph{Galerkin curve}, is a finite-dimension approximation to the true solution of the Euler-Poincar\'{e} equations over the interval \(\left[0,h\right]\). For the one-step map, we are only concerned with the right endpoint of the Galerkin curve, as%
\begin{align*}
g_{k+1} = g_{n}\lefri{h}.
\end{align*}%
However, the curve itself has excellent approximation properties as a continuous approximation to the solution of the Euler-Poincar\'{e} equations over the interval \(\left[0,h\right]\). Because Lie group Galerkin variational integrators are capable of taking very large time steps, the dynamics during these time steps may be of interest, and hence the quality of the approximation by these Galerkin curves is also of particular interest.

Ideally, these curves would have the same order of error as the one-step map. Unfortunately, we can only establish error estimates with lower orders of approximation. We established similar results in the vector space case, see \citet{HaLe2012}, and observed that at high enough accuracy, there is indeed greater error in the Galerkin curve than the one-step map. However, when comparing these curves to the true solution, typically the error introduced by the inaccuracies in \(\lefri{g_{k},g_{k+1}}\) dominates the error from the Galerkin curve, and thus this lower rate of convergence is not observable in practice.

Before we formally establish the rates of convergence for the Galerkin curves, we will briefly review the norms we will use in our theorems and proofs. First, recall the \(L_{p}\) norm for functions over the interval \(\left[0,h\right]\) given by%
\begin{align*}
\LNorm{f}{p} = \lefri{\int_{0}^{h}\left|f\right|^{p}\dt}^{\frac{1}{p}}
\end{align*}%
and next, the Sobolev norm \(\SobNorm{\cdot}{p}\) for functions on the interval \(\left[0,h\right]\), given by:%
\begin{align*}
\SobNorm{f}{p} = \lefri{\LNorm{f}{p}^{p} + \LNorm{\dot{f}}{p}^{p}}^{\frac{1}{p}}.
\end{align*}
Also, note that for curves \(\lgc \in \mathfrak{g}\), \(\left|\lgc\right| = \RMetric{\lgc}{\lgc}^{\frac{1}{2}}\). We will make extensive use of these definitions in the next three theorems.

\begin{theorem}\label{LG_GeoConv_GalCurves}

Under the same assumptions as Theorem \ref{LG_GeoConv}, consider the action as a function of the local left trivialization of the Lie group curve and its derivative,%
\begin{align*}
\mathfrak{S}_{\mathfrak{g}}\lefri{\trula\lefri{t},\dtrula\lefri{t}} = \int_{0}^{h}L\lefri{L_{g}\Phi\lefri{\trula\lefri{t}},\ddt L_{g}\Phi\lefri{\trula\lefri{t}}} \dt,
\end{align*}%
where \(L_{g}\Phi\lefri{\trula\lefri{t}}\) satisfies the Euler-Poincar\'{e} equations exactly. If at \(\lefri{\trula\lefri{t},\dtrula\lefri{t}}\) the action \(\mathfrak{S}_{\mathfrak{g}}\lefri{\cdot,\cdot}\) is twice Frechet differentiable and the second Frechet derivative is coercive in variations of the Lie algebra, that is, %
\begin{align*}
\left|D^{2}\mathfrak{S}_{\mathfrak{g}}\lefri{\lefri{\trula\lefri{t},\dtrula\lefri{t}}}\left[\lefri{\delta \lgc,\delta \dlgc}\right]\left[\lefri{\delta \lgc, \delta \dlgc}\right]\right| \geq \CoerC \SobNorm{\delta \lgc}{1}^{2}
\end{align*}%
for all \(\dellgc \in \HoA\), then if the one-step map has error \(\mathcal{O}\lefri{K^{n}}\), the Galerkin curves have error \(\mathcal{O}\lefri{\sqrt{K}^{n}}\) in Sobolev norm \(\SobNorm{\cdot}{1}\).
\end{theorem}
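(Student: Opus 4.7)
The plan is to adapt the standard Galerkin-type error analysis to this Lie algebra setting, using the coercivity of the second Frechet derivative as a lower bound that relates the Sobolev norm of the error to a difference of actions. Let $\trula$ denote the local left trivialization of the true solution on $[0,h]$ and $\galnla$ the local left trivialization of the Galerkin curve; both start at $0$ in the natural chart based at $g_{k}$, but their values at $t=h$ differ by $\lefri{\trula(h) - \galnla(h)}$, which is controlled by the one-step error $\mathcal{O}(K^{n})$ from Theorem \ref{LG_GeoConv}. Since $\trula - \galnla$ therefore need not lie in $\HoA$, I first construct a boundary corrector $b(t) \in \AFdFSpace$ with $b(0) = 0$ and $b(h) = \trula(h) - \galnla(h)$, for example a linear or low-order polynomial interpolant. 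A direct estimate gives $\SobNorm{b}{1} = \mathcal{O}(K^{n})$, and the adjusted difference $\delta := \galnla + b - \trula$ lies in $\HoA$.

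With $\delta \in \HoA$, I Taylor-expand $\mathfrak{S}_{\mathfrak{g}}(\galnla + b, \dgalnla + \dot{b})$ around $(\trula, \dtrula)$. The first-order term $D\mathfrak{S}_{\mathfrak{g}}(\trula,\dtrula)[(\delta,\dot{\delta})]$ vanishes because $\trula$ satisfies the Euler-Poincar\'e equations and $\delta$ has zero boundary values, leaving $\mathfrak{S}_{\mathfrak{g}}(\galnla+b,\dgalnla+\dot{b}) - \mathfrak{S}_{\mathfrak{g}}(\trula,\dtrula) = \tfrac{1}{2} D^{2}\mathfrak{S}_{\mathfrak{g}}(\eta,\dot{\eta})[(\delta,\dot{\delta})][(\delta,\dot{\delta})]$ for some intermediate $\eta$. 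I then bound the left-hand side by $\mathcal{O}(K^{n})$ by chaining four estimates: the quadrature accuracy replaces $\mathfrak{S}_{\mathfrak{g}}(\galnla,\dgalnla)$ by the Galerkin discrete sum with error $\mathcal{O}(K^{n})$; the optimality chain from the proof of Theorem \ref{LG_GeoConv} gives $|\GDLn - \EDLn| = \mathcal{O}(K^{n})$ at the one-step endpoints; Lipschitz continuity of $L$ in conjunction with $|\galnla(h) - \trula(h)| = \mathcal{O}(K^{n})$ converts $\EDLn$ at the numerical endpoint into $\mathfrak{S}_{\mathfrak{g}}(\trula,\dtrula)$ up to $\mathcal{O}(K^{n})$; and a final Lipschitz step absorbs the perturbation by $b$ since $\SobNorm{b}{1} = \mathcal{O}(K^{n})$.

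Applying the coercivity hypothesis, extended from $(\trula,\dtrula)$ to the intermediate point $(\eta,\dot{\eta})$ by continuity of $D^{2}\mathfrak{S}_{\mathfrak{g}}$ (valid once $n$ is large enough that $\galnla + b$ lies in a small $W^{1,1}$-neighborhood of $\trula$, which is guaranteed by the geometric approximation assumption), yields $\CoerC \SobNorm{\delta}{1}^{2} \leq |D^{2}\mathfrak{S}_{\mathfrak{g}}(\eta,\dot{\eta})[(\delta,\dot{\delta})][(\delta,\dot{\delta})]| = \mathcal{O}(K^{n})$, so $\SobNorm{\delta}{1} = \mathcal{O}(\sqrt{K}^{n})$. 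The triangle inequality $\SobNorm{\trula - \galnla}{1} \leq \SobNorm{\delta}{1} + \SobNorm{b}{1}$ then gives the claimed bound, since the $\mathcal{O}(K^{n})$ boundary term is subdominant. The main obstacle is the bookkeeping in the action-bound step: because $\galnla$ is not stationary for the continuous action and its right endpoint does not match $\trula(h)$ exactly, the usual FEM-style Galerkin orthogonality is unavailable, and one must instead combine the discrete variational characterization of $\galnla$, the quadrature estimates, the one-step convergence from Theorem \ref{LG_GeoConv}, and Lipschitz continuity of $L$ to close the loop. A secondary subtlety is ensuring that the constants hidden in $\mathcal{O}(K^{n})$ remain independent of $n$, which follows because each constituent estimate in the chain has this property by hypothesis.
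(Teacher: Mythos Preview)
Your core strategy---Taylor-expand the action around the exact solution, kill the first-order term by stationarity, then invoke coercivity of the second Frechet derivative---is exactly what the paper does. However, you have introduced an unnecessary complication by misreading the boundary conditions.

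In the paper's setup, the exact discrete Lagrangian \(\EDLh{g_{k}}{g_{k+1}}{h}\) is the action of the \emph{local} exact solution \(\trug\) of the Euler--Poincar\'e equations with boundary values \(\trug(0)=g_{k}\) and \(\trug(h)=g_{k+1}\), i.e.\ the \emph{same} endpoints as the Galerkin curve \(\galgn\). Consequently, in the natural chart both trivializations satisfy \(\trula(0)=\galnla(0)=0\) and \(\trula(h)=\galnla(h)=\Phi^{-1}(L_{g_{k}^{-1}}g_{k+1})\), so the difference \(\trula-\galnla\) already lies in \(\HoA\). No boundary corrector \(b\) is needed, and the chain of endpoint-adjustment estimates you describe (``Lipschitz continuity \ldots\ converts \(\EDLn\) at the numerical endpoint into \(\mathfrak{S}_{\mathfrak{g}}(\trula,\dtrula)\)'') is superfluous: \(\EDLh{g_{k}}{g_{k+1}}{h}\) \emph{is} \(\mathfrak{S}_{\mathfrak{g}}(\trula,\dtrula)\) by definition.

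With this observation the argument collapses to the paper's short proof: the bound \(|\EDLn-\GDLn|\leq \SpecC\SpecK^{n}\) from Theorem~\ref{LG_GeoConv}, one application of the quadrature estimate to pass from \(\GDLn\) to \(\mathfrak{S}_{\mathfrak{g}}(\galnla,\dgalnla)\), the Taylor expansion with vanishing first variation, and coercivity. Your version is not wrong, but it solves a harder problem than the theorem actually poses.
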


\begin{proof}
We start with the bound (\ref{GeoIneq3}), given at the end of the proof of Theorem \ref{LG_GeoConv} in the appendix,%
\begin{align*}
\left|\EDLh{\gk}{\gkp}{h} - \GDLh{\gk}{\gkp}{n}\right| \leq \SpecC \SpecK^{n},
\end{align*}
and expand using the definitions of \(\EDLh{\gk}{\gkp}{h}\) and \(\GDLh{\gk}{\gkp}{n}\),%
\begin{align*}
\SpecC \SpecK^{n} &\geq \left|\EDLh{\gk}{\gkp}{h} - \GDLh{\gk}{\gkp}{n}\right| \\
& \geq \left|\int_{0}^{h}L\lefri{\LGForm{\trula\lefri{t}},\ddt \LGForm{\trula\lefri{t}}}\dt - \sum_{j=1}^{m}\bj L\lefri{\LGForm{\galla\lefri{t}},\ddt\LGForm{\galla\lefri{t}}}\right| \\
& \geq \left|\int_{0}^{h}L\lefri{\LGForm{\trula\lefri{t}},\ddt \LGForm{\trula\lefri{t}}}\dt - \int_{0}^{h}L\lefri{\LGForm{\galla\lefri{t}},\ddt\LGForm{\galla\lefri{t}}}\dt\right| - \QuadC \QuadK^{n} \\
& = \left|\mathfrak{S}_{\mathfrak{g}}\lefri{\trula\lefri{t},\dtrula\lefri{t}} - \mathfrak{S}_{\mathfrak{g}}\lefri{\galla\lefri{t},\dgalla\lefri{t}}\right| - \QuadC \QuadK^{n},
\end{align*}%
and since \(\QuadK \leq \SpecK\), this implies%
\begin{align}
\lefri{\SpecC + \QuadC}\SpecK^{n} \geq \left|\mathfrak{S}_{\mathfrak{g}}\lefri{\trula\lefri{t},\dtrula\lefri{t}} - \mathfrak{S}_{\mathfrak{g}}\lefri{\galla\lefri{t},\dgalnla\lefri{t}}\right|. \label{LG_GalCuIneq1}
\end{align}%
We now Taylor expand around the exact solution \(\lefri{\trula\lefri{t},\dtrula\lefri{t}}\)%
\begin{align}
\mathfrak{S}_{\mathfrak{g}}\lefri{\galla\lefri{t},\dgalla\lefri{t}} =& \mathfrak{S}_{\mathfrak{g}}\lefri{\trula\lefri{t},\dtrula\lefri{t}} + D\mathfrak{S}_{\mathfrak{g}}\lefri{\trula\lefri{t},\dtrula\lefri{t}}\left[\trula\lefri{t} - \galla\lefri{t},\dtrula\lefri{t} - \dgalla\lefri{t}\right] \label{LG_GalCuIneq2} \\
& \hspace{1em} + \frac{1}{2}D^{2}\mathfrak{S}_{\mathfrak{g}}\lefri{\nu\lefri{t},\dot{\nu}\lefri{t}}\left[\lefri{\trula\lefri{t} - \galla\lefri{t},\dtrula\lefri{t} - \dgalla\lefri{t}}\right]\left[\lefri{\trula\lefri{t} - \galla\lefri{t},\dtrula\lefri{t} - \dgalla\lefri{t}}\right], \nonumber
\end{align}%
where \(\nu\lefri{t}\) is a curve in \(\mathfrak{g}\). Now, note that \(D\mathfrak{S}_{\mathfrak{g}}\lefri{\trula\lefri{t},\dtrula\lefri{t}} = 0\) is exactly the stationarity condition of the Euler-Poincar\'{e} equations. Thus, inserting (\ref{LG_GalCuIneq2}) into (\ref{LG_GalCuIneq1}) yields%
\begin{align}
\lefri{\SpecC + \QuadC}\SpecK^{n} &\geq \frac{1}{2}\left|D^{2}\mathfrak{S}_{\mathfrak{g}}\lefri{\nu\lefri{t},\dot{\nu}\lefri{t}}\left[\lefri{\trula\lefri{t} - \galla\lefri{t},\dtrula\lefri{t} - \dgalla\lefri{t}}\right]\left[\lefri{\trula\lefri{t} - \galla\lefri{t},\dtrula\lefri{t} - \dgalla\lefri{t}}\right]\right| \nonumber\\
& \geq \frac{\CoerC}{2} \SobNorm{\trula\lefri{t} - \galla\lefri{t}}{1}^{2} \label{LG_GalCuIneq3}
\end{align}%
where we have made use of the coercivity of the second derivative of the action. Simplifying (\ref{LG_GalCuIneq3}) yields
\begin{align*}
\sqrt{\frac{2\lefri{\SpecC + \QuadC}}{\CoerC}}\sqrt{\SpecK}^{n} \geq \SobNorm{\trula\lefri{t} - \galla\lefri{t}}{1},
\end{align*}%
which establishes convergence in the Sobolev norm.
\end{proof}%
Just as we proved an order optimality theorem, Theorem \ref{LG_OptConv}, that was analogous to the geometric convergence theorem, Theorem \ref{LG_GeoConv}, we can establish an analogous convergence theorem for Galerkin curves with \(h\)-refinement.%
\begin{theorem} \label{LG_OptConv_GalCurves}

Under the same assumptions as Theorem \ref{LG_OptConv}, consider the action as a function of the local left trivialization of the Lie group curve and its derivative,%
\begin{align*}
\mathfrak{S}_{\mathfrak{g}}\lefri{\trula,\dtrula} = \int_{0}^{h}L\lefri{L_{g}\Phi\lefri{\trula},\ddt L_{g}\Phi\lefri{\trula}} \dt.
\end{align*}%
If at \(\lefri{\trula,\dtrula}\) the action \(\mathfrak{S}_{\mathfrak{g}}\lefri{\cdot,\cdot}\) is twice Frechet differentiable and the second Frechet derivative is coercive in variations of the Lie algebra as in Theorem \ref{LG_GeoConv_GalCurves}, then if the one-step map has error \(\mathcal{O}\lefri{h^{n+1}}\), then the Galerkin curves have error \(\mathcal{O}\lefri{h^{\frac{n+1}{2}}}\) in the Sobolev norm \(\SobNorm{\cdot}{1}\).
\end{theorem}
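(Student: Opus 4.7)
The plan is to proceed in exactly the same way as the proof of Theorem \ref{LG_GeoConv_GalCurves}, with the geometric decay rate $\SpecC \SpecK^{n}$ replaced everywhere by the algebraic rate $\OptC h^{n+1}$ that was established in Theorem \ref{LG_OptConv}. The key observation is that the argument in Theorem \ref{LG_GeoConv_GalCurves} never used the specific form of the decay rate, only (i) a bound $|\EDLn - \GDLn| \leq \text{(rate)}$, (ii) the quadrature error assumption, (iii) vanishing of the first Frechet derivative at the true trajectory, and (iv) coercivity of the second Frechet derivative.

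First, I would invoke Theorem \ref{LG_OptConv} to obtain
\begin{align*}
\left|\EDLh{\gk}{\gkp}{h} - \GDLh{\gk}{\gkp}{h}\right| \leq \OptC h^{n+1}.
\end{align*}
Expanding the left-hand side using the definitions of both discrete Lagrangians, introducing and bounding the quadrature error by $\QuadC h^{n+1}$ (valid because the Galerkin curve lies in the admissible space for the quadrature hypothesis), and rewriting the remaining integrals as $\mathfrak{S}_{\mathfrak{g}}\lefri{\trula,\dtrula}$ and $\mathfrak{S}_{\mathfrak{g}}\lefri{\galla,\dgalla}$ respectively, I would obtain
\begin{align*}
\lefri{\OptC + \QuadC} h^{n+1} \geq \left|\mathfrak{S}_{\mathfrak{g}}\lefri{\trula,\dtrula} - \mathfrak{S}_{\mathfrak{g}}\lefri{\galla,\dgalla}\right|.
\end{align*}

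Next, I would Taylor expand $\mathfrak{S}_{\mathfrak{g}}\lefri{\galla,\dgalla}$ about the exact solution $\lefri{\trula,\dtrula}$. The first-order term vanishes because $\lefri{\trula,\dtrula}$ satisfies the Euler--Poincar\'{e} equations, so only the second-order term contributes. Applying the coercivity assumption (identical to the one used in Theorem \ref{LG_GeoConv_GalCurves}) to the remainder yields
\begin{align*}
\lefri{\OptC + \QuadC} h^{n+1} \geq \frac{\CoerC}{2} \SobNorm{\trula - \galla}{1}^{2}.
\end{align*}
Taking square roots delivers the claimed bound
\begin{align*}
\SobNorm{\trula - \galla}{1} \leq \sqrt{\frac{2\lefri{\OptC + \QuadC}}{\CoerC}}\; h^{\frac{n+1}{2}},
\end{align*}
which is the $\mathcal{O}\lefri{h^{\frac{n+1}{2}}}$ Sobolev-norm convergence of the Galerkin curves.

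There is no real obstacle: the only thing to verify carefully is that every step used in the proof of Theorem \ref{LG_GeoConv_GalCurves} depends only on the order of the discrete-Lagrangian error bound and on the quadrature rate exceeding (or matching) it, both of which transfer verbatim from the geometric setting to the $h$-refinement setting. Since $\QuadC h^{n+1}$ has the same order as $\OptC h^{n+1}$, the quadrature error can be absorbed exactly as $\QuadC \QuadK^{n}$ was absorbed into $\lefri{\SpecC + \QuadC}\SpecK^{n}$ in the preceding proof, and the loss of a factor of two in the exponent comes, as before, purely from the square root in extracting $\SobNorm{\cdot}{1}$ from $\SobNorm{\cdot}{1}^{2}$.
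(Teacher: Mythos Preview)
Your proposal is correct and follows exactly the approach the paper indicates: replace the bounds containing $\SpecK^{n}$ in the proof of Theorem \ref{LG_GeoConv_GalCurves} by bounds containing $h^{n+1}$, absorb the quadrature error of the same order, and then use vanishing of the first Frechet derivative together with coercivity and a square root to obtain the $\mathcal{O}\lefri{h^{\frac{n+1}{2}}}$ rate. The paper itself gives no more detail than this, merely stating that the proof is ``nearly identical'' with $\SpecK^{n}$ replaced by $h^{n+1}$ ``in the obvious way.''
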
%
The proof Theorem \ref{LG_OptConv_GalCurves} is nearly identical to that of Theorem \ref{LG_GeoConv_GalCurves}, the only difference being that the bounds containing \(\SpecK^{n}\) are replaced with bounds containing \(h^{n+1}\) in the obvious way.

Like the assumption that the stationary point of the discrete action is a minimizer in Theorems \ref{LG_GeoConv} and \ref{LG_OptConv}, the assumption that the second Frechet derivative of the action is coercive might seem quite strong. However, we can show that for Lagrangians on \(\SoT\) of the form%
\begin{align*}
L\lefri{R,\dot{R}} = \mbox{tr}\lefri{\dot{R}^{T}RJ_{d}R^{T}\dot{R}} - V\lefri{R},
\end{align*}%
the second Frechet derivative of the action is coercive, subject to a time-step restriction on \(h\).
\begin{lemma} \label{LG_CoerAction}
For Lagrangians on \(\SoT\) of the form%
\begin{align*}
L\lefri{R,\dot{R}} = \mbox{tr}\lefri{\dot{R}^{T}RJ_{d}R^{T}\dot{R}} - V\lefri{R},
\end{align*}%
there exists a \(C > 0\) such that for \(h < C\), the second Frechet derivative of \(\mathfrak{S}_{\mathfrak{g}}\lefri{\cdot,\cdot}\) at \(\lefri{\trula\lefri{t},\dtrula\lefri{t}}\) is coercive on the interval \(\left[0,h\right]\).
\end{lemma}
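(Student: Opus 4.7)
The plan is to adapt the Hessian analysis already carried out in the proof of Lemma \ref{StaMini}, now applied to the continuous action $\mathfrak{S}_{\mathfrak{g}}$ at the exact trajectory $(\trula,\dtrula)$, rather than to the discrete action $\mathbb{S}_{d}$ on a finite-dimensional polynomial space. First I would split $L = K - V$ with $K(\xi,\dot{\xi}) = \dot{R}^{T}RJ_{d}R^{T}\dot{R}$ and use the same identity $R^{T}\dot{R} = \Phi\lefri{\xi}^{T}\nabla\Phi\lefri{\xi}\dot{\xi}$ to express the Hessian of the kinetic energy in terms of $\xi$ and $\dot{\xi}$. Because $\trula$ is $C^{2}$ on the compact interval $\left[0,h\right]$ and $\Phi$ is a diffeomorphism, $\left|\sigma_{\min}\lefri{t}\right|$ is bounded below by some $\sigmaC > 0$ along $\trula$, and the pointwise bounds (\ref{MinBound_ddq}), (\ref{MaxBound_dqddq}), (\ref{MaxBound_dq}), and (\ref{PotentBound}) derived in Lemma \ref{StaMini} transfer verbatim. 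Completing the square exactly as before produces the pointwise inequality (\ref{KinBound}) along $\trula$ with constants $\dboundC > 0$ and $\cboundC > 0$.

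Next, since the second Frechet derivative of $\mathfrak{S}_{\mathfrak{g}}$ is just the integral of the Hessian of $L$, integrating these pointwise bounds yields
\begin{align*}
D^{2}\mathfrak{S}_{\mathfrak{g}}\lefri{\trula,\dtrula}\left[\lefri{\dellgc,\deldlgc}\right]\left[\lefri{\dellgc,\deldlgc}\right] \geq \dboundC \LNorm{\deldlgc}{2}^{2} - \lefri{\cboundC + \potentC} \LNorm{\dellgc}{2}^{2}.
\end{align*}
Because $\dellgc \in \HoA$, the Poincar\'{e} inequality gives $\LNorm{\dellgc}{2}^{2} \leq \frac{h^{2}}{\pi^{2}}\LNorm{\deldlgc}{2}^{2}$, so the right-hand side is bounded below by $\lefri{\dboundC - \lefri{\cboundC + \potentC}\frac{h^{2}}{\pi^{2}}}\LNorm{\deldlgc}{2}^{2}$, which is strictly positive for every $h < \pi \sqrt{\dboundC / \lefri{\cboundC + \potentC}}$.

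Finally I would convert this lower bound from a multiple of $\LNorm{\deldlgc}{2}^{2}$ to one of $\SobNorm{\dellgc}{1}^{2}$. Cauchy-Schwarz on $\left[0,h\right]$ gives $\LNorm{f}{1} \leq \sqrt{h}\LNorm{f}{2}$, hence $\SobNorm{\dellgc}{1}^{2} \leq 2h\lefri{\LNorm{\dellgc}{2}^{2} + \LNorm{\deldlgc}{2}^{2}} \leq 2h\lefri{1 + h^{2}/\pi^{2}}\LNorm{\deldlgc}{2}^{2}$, which after rearrangement produces coercivity with constant
\begin{align*}
\CoerC = \frac{\dboundC - \lefri{\cboundC + \potentC}h^{2}/\pi^{2}}{2h\lefri{1 + h^{2}/\pi^{2}}} > 0
\end{align*}
for every $h < C := \pi\sqrt{\dboundC / \lefri{\cboundC + \potentC}}$, establishing the claim.

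The main obstacle is the bookkeeping required to import the constants $\sigmaC$, $\mixedC$, $\ddlgC$, and $\potentC$ from Lemma \ref{StaMini}: there they were extracted for generic $C^{2}$ curves over a compact interval, whereas here they must be shown to hold uniformly along the specific exact trajectory $\trula\lefri{t}$. This reduces to observing that $\trula$ is $C^{2}$ on $\left[0,h\right]$ and that $\nabla\Phi$ is continuous and non-singular in a neighborhood of its range, which is routine but must be made explicit to ensure that the resulting constants are independent of $h$ for $h$ in a bounded interval, so that the final comparison with $\pi\sqrt{\dboundC / \lefri{\cboundC + \potentC}}$ indeed defines a meaningful time-step threshold $C$.
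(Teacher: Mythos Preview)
Your proposal is correct and follows essentially the same route as the paper: import the pointwise Hessian bounds from Lemma~\ref{StaMini}, integrate, apply the Poincar\'{e} inequality to absorb the negative $\LNorm{\dellgc}{2}^{2}$ term, and then use H\"{o}lder/Cauchy--Schwarz to pass from $L^{2}$ to the $W^{1,1}$ norm. The only organizational difference is that the paper first splits $\dboundC = \tfrac{\dboundC}{2} + \tfrac{\dboundC}{2}$ and applies Poincar\'{e} to just one half, retaining separate positive control of both $\LNorm{\dellgc}{2}^{2}$ and $\LNorm{\deldlgc}{2}^{2}$ before taking a minimum, whereas you apply Poincar\'{e} to the full coefficient and then reuse Poincar\'{e} once more when bounding $\SobNorm{\dellgc}{1}^{2}$ by $\LNorm{\deldlgc}{2}^{2}$; your version actually yields the slightly larger threshold $C = \pi\sqrt{\dboundC/(\cboundC + \potentC)}$ versus the paper's $\sqrt{\dboundC\pi^{2}/(2(\cboundC+\potentC))}$.
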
%
\begin{proof}

First, we note that for this Lagrangian%
\begin{align*}
&D^{2}\mathfrak{S}_{\mathfrak{g}}\lefri{\trula\lefri{t},\dtrula\lefri{t}}\left[\lefri{\dellgc,\deldlgc}\right]\left[\lefri{\dellgc,\deldlgc}\right] \\
&= \int_{0}^{h}\nabla^{2}L\lefri{\LGForm{\trula\lefri{t}},\ddt \LGForm{\trula\lefri{t}}}\left[\lefri{\dellgc,\deldlgc}\right]\left[\lefri{\dellgc,\deldlgc}\right] \dt
\end{align*}%
From the proof of Lemma \ref{StaMini}, we know that%
\begin{align*}
&\nabla^{2}L\lefri{\LGForm{\trula\lefri{t}},\ddt \LGForm{\trula\lefri{t}}}\left[\lefri{\dellgc, \deldlgc}\right]\left[\lefri{\dellgc, \deldlgc}\right] \\
&\geq  \dboundC \deldlgc^{T}\deldlgc - \lefri{\cboundC + \potentC} \dellgc^{T} \dellgc\\
&= \frac{\dboundC}{2} \deldlgc^{T}\deldlgc + \frac{\dboundC}{2} \deldlgc^{T}\deldlgc - \lefri{\cboundC + \potentC} \dellgc^{T} \dellgc,
\end{align*}%
and hence%
\begin{align}
&D^{2}\mathfrak{S}_{\mathfrak{g}}\lefri{\trula\lefri{t},\dtrula\lefri{t}}\left[\lefri{\dellgc,\deldlgc}\right]\left[\lefri{\dellgc,\deldlgc}\right] \nonumber \\
&\geq  \int_{0}^{h}\frac{\dboundC}{2} \deldlgc^{T}\deldlgc \dt +  \int_{0}^{h}\frac{\dboundC}{2} \deldlgc^{T}\deldlgc - \lefri{\cboundC + \potentC} \dellgc^{T} \dellgc \dt \label{CoerTerm}.
\end{align}%
Applying Poincar\'{e}'s inequality, we see that%
\begin{align}
&\int_{0}^{h} \frac{\dboundC}{2} \deldlgc^{T}\deldlgc \dt - \int_{0}^{h} \lefri{\cboundC + \potentC} \dellgc^{T}\dellgc \dt \nonumber \\
&\geq  \frac{\dboundC\pi^{2}}{2h^{2}} \int_{0}^{h}\dellgc^{T}\dellgc \dt - \lefri{\cboundC + \potentC} \int_{0}^{h}\dellgc^{T}\dellgc \dt \nonumber\\
& = \lefri{\frac{\dboundC\pi^{2}}{2h^{2}} - \lefri{\cboundC + \potentC}}\int_{0}^{h} \dellgc^{T}\dellgc \dt. \label{CoerPoinc}
\end{align}%
Replacing the last two terms in (\ref{CoerTerm}) with (\ref{CoerPoinc}), we see%
\begin{align*}
&D^{2}\mathfrak{S}_{\mathfrak{g}}\lefri{\lefri{\trula,\dtrula}}\left[\lefri{\dellgc,\deldlgc}\right]\left[\lefri{\dellgc,\deldlgc}\right] \\ 
&\geq \frac{\dboundC}{2} \int_{0}^{h}\deldlgc^{T}\deldlgc \dt + \lefri{\frac{\dboundC\pi^{2}}{2h^{2}} - \lefri{\cboundC + \potentC}}\int_{0}^{h} \dellgc^{T}\dellgc \dt \\
&= \frac{\dboundC}{2}\LNorm{\deldlgc}{2}^{2} + \lefri{\frac{\dboundC\pi^{2}}{2h^{2}} - \lefri{\cboundC + \potentC}}\LNorm{\dellgc}{2}^{2}.
\end{align*}%
We now apply H\"{o}lder's inequality%
\begin{align*}
\LNorm{fg}{1} \leq \LNorm{f}{2} \LNorm{g}{2}
\end{align*}
to derive the bounds%
\begin{align*}
\LNorm{\deldlgc}{1} &\leq \sqrt{h} \LNorm{\deldlgc}{2}\\
\LNorm{\dellgc}{1} &\leq \sqrt{h} \LNorm{\dellgc}{2},
\end{align*}
and hence,
\begin{align*}
&D^{2}\mathfrak{S}_{\mathfrak{g}}\lefri{\lefri{\trula,\dtrula}}\left[\lefri{\dellgc,\deldlgc}\right]\left[\lefri{\dellgc,\deldlgc}\right] \\
&\geq \frac{\dboundC }{2h}\LNorm{\deldlgc}{1}^{2} + \frac{1}{h}\lefri{\frac{\dboundC\pi^{2}}{2h^{2}} - \lefri{\cboundC + \potentC}}\LNorm{\dellgc}{1}^{2} \\
&\geq \min\lefri{\frac{\dboundC}{2h},\frac{1}{h}\lefri{\frac{\dboundC \pi^{2}}{2h^{2}} - \lefri{\cboundC + \potentC}}}\lefri{\LNorm{\dellgc}{1}^{2} + \LNorm{\deldlgc}{1}^{2}} \\
&\geq \min\lefri{\frac{\dboundC}{2h},\frac{1}{h}\lefri{\frac{\dboundC \pi^{2}}{2h^{2}} - \lefri{\cboundC + \potentC}}}\lefri{\frac{1}{2}}\lefri{\LNorm{\dellgc}{1} + \LNorm{\deldlgc}{1}}^{2} \\
&\geq \min\lefri{\frac{\dboundC}{4h},\frac{1}{2h}\lefri{\frac{\dboundC \pi^{2}}{2h^{2}} - \lefri{\cboundC + \potentC}}}\SobNorm{\dellgc}{1}^{2}
\end{align*}%
which establishes the required coercivity result so long as \(0 < h < \sqrt{\frac{\dboundC \pi^{2}}{2 \lefri{\cboundC + \potentC}}}\).
\end{proof}

\section{Cayley Transform Based Method on \(SO\lefri{3}\)} \label{CayCon}

Because the construction of a Lie group Galerkin variational integrator can be involved, we will provide an example of an integrator based on the Cayley transform for the rigid body on \(\SoT\) and related problems. We will first construct the method and then verify that it satisfies the hypotheses of Theorems \ref{LG_OptConv} and \ref{LG_GeoConv}, and in \S \ref{NumExp} we will demonstrate numerically that it exhibits the expected convergence.

Additionally, discretizing the rigid body amounts to discretizing a kinetic energy term that can be used in many different applications. It appears that discretizing the kinetic energy term of the rigid body is more painstaking than the potential term, so we provide a detailed description so that others will not have to repeat the derivation of this discretization for future applications.

\subsection{Rigid Body on \(\SoT\)}

The Lagrangian:%
\begin{align}
L\lefri{R,\dot{R}} &= \mbox{tr}\lefri{\dot{R}^{T}RJ_{d}R^{T}\dot{R}} \label{RigidBodyLagrangian}\\
J_{d} &= \frac{1}{2} \mbox{tr}\left[J\right]I_{3\times3} - J \nonumber\\
J &= \mbox{tr}\left[J_{d}\right]I_{3\times3} - J_{d},
\end{align}%
where \(R \in \SoT\) and \(J\) are the moments of inertia in the reference coordinate frame, gives rise to the equations of motion for the rigid body. The rigid body has a rich geometric structure, which is discussed in \citet{LeMcLe2005}, \citet{CeBy2003}, and \citet{MaRa1999}. In addition to being an interesting example of a non-canonical Lagrangian system, it is a standard model problem for discretization for numerical methods on Lie groups, and an overview of integrators applied to the rigid body can be found in \citet{HaLuWa2006}. 

\subsection{Construction}

To construct the Lie group Galerkin variational integrator, we will have to choose:%
\begin{enumerate}
\item a map \(\Phi\lefri{\cdot}:\soT \rightarrow \SoT\),
\item a finite dimensional function space \(\AFdFSpace\), and
\item a quadrature rule,
\end{enumerate}%
and to complete the error analysis, we must also choose%
\begin{enumerate}
\item a metric on \(\soT\) \(\RMetric{\cdot}{\cdot}\),
\item error functions \(\GroupE{\cdot}{\cdot}\) and \(\AlgeE{\cdot}{\cdot}\).
\end{enumerate}%
For our construction, we will make use of the Cayley transform for our map \(\Phi\lefri{\cdot}\) and Lagrange interpolation polynomials through \(\soT\) for the finite-dimensional function space \(\AFdFSpace\), that is,%
\begin{align*}
\AFdFSpace = \left\{\lgc \vphantom{\lgc = \sum_{i=1}^{n} \widehat{q^{i}\phi_{i}\lefri{t}}, q^{i} \in \mathbb{R}^{3}, \phi_{i}\lefri{t}\mbox{ is the Lagrange interpolation polynomial for \(t_{i}\)}}\right| \left. \lgc = \sum_{i=1}^{n} \widehat{q^{i}\phi_{i}\lefri{t}}, q^{i} \in \mathbb{R}^{3}, \phi_{i}\lefri{t}\mbox{ is the Lagrange interpolation polynomial for \(t_{i}\)}\right\},
\end{align*}%
where \(\hat{\cdot}\) is that hat map described by (\ref{hatmap}). For the error analysis we will choose:%
\begin{align*}
\RMetric{\hat{\eta}}{\hat{\nu}} &= \eta^{T}\nu, \\
\GroupE{G_{1}}{G_{2}} &= \MNorm{G_{1} - G_{2}}{2}\\
\AlgeE{\hat{\eta}}{\hat{\nu}} &= \MNorm{\hat{\eta} - \hat{\nu}}{2},
\end{align*}%
for arbitrary \(G_{1},G_{2} \in \SoT\) and \(\eta, \nu \in \mathbb{R}^{3}\), where the \(\MNorm{\cdot}{2}\) norm is understood as arising from the \(\MNorm{\cdot}{2}\) from the embedding space \(R^{3\times3}\). We will discuss these below, and elaborate on the motivation for these choices in our construction.%
\subsubsection{The Cayley Transform}

To construct our Lie group Galerkin variational Integrator, we will make use of the Cayley Transform, \(\Cay{\cdot}:\soT \rightarrow \SoT\) which is given by:%
\begin{align*}
\Cay{q} = \lefri{I - Q}\lefri{I + Q}^{-1}.
\end{align*}%
The reader should note that we are using an unscaled version of the Cayley transform, but for the purposes of constructing the natural chart, different versions of the Cayley transform should result in equivalent methods. Furthermore, the choice of the Cayley transform for the integrator is certainly not necessary; different choices of maps, such as the exponential map, would result in equally valid methods. We make use of the Cayley transform simply because it is easy to manipulate and compute, is its own inverse, and because it satisfies our chart conditioning assumptions, as we will establish shortly.

\begin{lemma}
For \(\eta, \nu \in \soT \), so long as%
\begin{align}
2\MNorm{\eta}{2} + \MNorm{\nu}{2} < 1, \label{StrongAssumpt}
\end{align}%
the natural chart constructed by the Cayley transform locally satisfies chart conditioning assumption, that is:
\begin{align*}
\MNorm{\Cay{\eta} - \Cay{\nu}}{2} &\leq \GroupC \RMetric{\eta - \nu}{\eta - \nu}^{\frac{1}{2}} \\
\MNorm{D_{\eta}\Cay{\dot{\eta}} - D_{\nu}\Cay{\dot{\nu}}}{2} &\leq \AlgeC\RMetric{\eta - \nu}{\eta - \nu}^{\frac{1}{2}} + \AlgeGC \RMetric{\dot{\eta} - \dot{\nu}}{\dot{\eta} - \dot{\nu}}^{\frac{1}{2}}.
\end{align*}%
If \(\MNorm{\eta - \nu}{2} < \epsilon\), assumption (\ref{StrongAssumpt}) can be relaxed to%
\begin{align*}
\MNorm{\eta}{2} + \epsilon < 1.
\end{align*}%
\end{lemma}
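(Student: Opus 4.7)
The plan is to exploit the rational form of the Cayley transform by first rewriting $\Cay{Q} = \lefri{I-Q}\lefri{I+Q}^{-1} = -I + 2\lefri{I+Q}^{-1}$, which removes the numerator and makes matrix differences easy to telescope. Applying the resolvent identity $A^{-1} - B^{-1} = A^{-1}\lefri{B-A}B^{-1}$, the group difference collapses to
\begin{align*}
\Cay{\eta} - \Cay{\nu} = 2\left[\lefri{I+\eta}^{-1} - \lefri{I+\nu}^{-1}\right] = -2\lefri{I+\eta}^{-1}\lefri{\eta - \nu}\lefri{I+\nu}^{-1}.
\end{align*}
Submultiplicativity of $\MNorm{\cdot}{2}$ together with the Neumann bound $\MNorm{\lefri{I+X}^{-1}}{2} \leq 1/\lefri{1 - \MNorm{X}{2}}$ (valid whenever $\MNorm{X}{2} < 1$) and the identity $\MNorm{\hat{a}}{2} = \MNorm{a}{2}$ coming from the singular values of a $3\times 3$ skew-symmetric matrix then yield the first inequality with $\GroupC = 2/\lefri{\lefri{1-\MNorm{\eta}{2}}\lefri{1-\MNorm{\nu}{2}}}$.

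For the tangent estimate, differentiating the rewritten form yields $D_{Q}\Cay{\dot{Q}} = -2\lefri{I+Q}^{-1}\dot{Q}\lefri{I+Q}^{-1}$. I then split the difference into three pieces by inserting mixed terms:
\begin{align*}
D_{\eta}\Cay{\dot{\eta}} - D_{\nu}\Cay{\dot{\nu}} = &-2\lefri{\lefri{I+\eta}^{-1} - \lefri{I+\nu}^{-1}}\dot{\eta}\lefri{I+\eta}^{-1}\\
&-2\lefri{I+\nu}^{-1}\lefri{\dot{\eta} - \dot{\nu}}\lefri{I+\eta}^{-1}\\
&-2\lefri{I+\nu}^{-1}\dot{\nu}\lefri{\lefri{I+\eta}^{-1} - \lefri{I+\nu}^{-1}}.
\end{align*}
Applying the resolvent identity to the two bracketed resolvent differences and invoking submultiplicativity, the three rows contribute bounds proportional to $\MNorm{\eta-\nu}{2}\MNorm{\dot{\eta}}{2}$, $\MNorm{\dot{\eta}-\dot{\nu}}{2}$, and $\MNorm{\eta-\nu}{2}\MNorm{\dot{\nu}}{2}$, respectively. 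Collecting the $\MNorm{\eta - \nu}{2}$ terms into a single coefficient $\AlgeGC$ (absorbing the resolvent norms together with the a priori bounds on $\MNorm{\dot{\eta}}{2}$ and $\MNorm{\dot{\nu}}{2}$) and assigning the $\MNorm{\dot{\eta}-\dot{\nu}}{2}$ term to $\AlgeC$ produces exactly the stated inequality.

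The hypothesis $2\MNorm{\eta}{2} + \MNorm{\nu}{2} < 1$ plays two roles: the looser constraints $\MNorm{\eta}{2} < 1$ and $\MNorm{\nu}{2} < 1$ are what actually make the resolvents exist, while the additional slack lets one absorb the extra factor $\MNorm{\eta - \nu}{2} \leq \MNorm{\eta}{2} + \MNorm{\nu}{2}$ that appears in the tangent estimate into a single uniform constant bounded away from $0$. The relaxed local hypothesis follows by the triangle inequality: if $\MNorm{\eta - \nu}{2} < \epsilon$ and $\MNorm{\eta}{2} + \epsilon < 1$, then $\MNorm{\nu}{2} \leq \MNorm{\eta}{2} + \epsilon < 1$ and $\MNorm{\eta - \nu}{2} < \epsilon$ replaces the crude $\MNorm{\eta}{2} + \MNorm{\nu}{2}$ bound, restoring all resolvent estimates. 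The argument is conceptually elementary; the main obstacle is simply the bookkeeping required to package the three telescoped pieces so that the constants depend only on the stated norm bounds and not on $\eta-\nu$ or $\dot{\eta}-\dot{\nu}$.
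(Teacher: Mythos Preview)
Your proof is correct and, in fact, cleaner than the paper's. The key difference is your algebraic rewriting \(\Cay{Q} = -I + 2\lefri{I+Q}^{-1}\), which kills the numerator \(I-Q\) and lets the resolvent identity \(A^{-1}-B^{-1} = A^{-1}(B-A)B^{-1}\) do all the work in one step. The paper instead keeps the factored form \(\lefri{I-Q}\lefri{I+Q}^{-1}\), telescopes by inserting and removing \(\lefri{I-\eta}\lefri{I+\nu}^{-1}\), and then invokes a perturbation bound of the form \(\MNorm{(A+E)^{-1}-A^{-1}}{2} \leq \MNorm{E}{2}\MNorm{A^{-1}}{2}^{2}\lefri{1-\MNorm{A^{-1}E}{2}}^{-1}\); this is where the extra factor \(\lefri{1 - 2\MNorm{\eta}{2} - \MNorm{\nu}{2}}^{-1}\) enters and explains the otherwise mysterious hypothesis \(2\MNorm{\eta}{2} + \MNorm{\nu}{2} < 1\). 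Your route never produces that factor, so strictly speaking you only need \(\MNorm{\eta}{2},\MNorm{\nu}{2} < 1\), a genuine strengthening; your paragraph trying to justify the stronger hypothesis is therefore unnecessary for your argument, though of course the lemma as stated still follows. Two cosmetic points: you have the labels \(\AlgeC\) and \(\AlgeGC\) swapped relative to the statement (in the lemma \(\AlgeC\) multiplies the \(\eta-\nu\) term and \(\AlgeGC\) the \(\dot\eta-\dot\nu\) term), and your observation \(\MNorm{\hat a}{2} = \MNorm{a}{2}\) is sharper than the paper's crude norm-equivalence bound \(\MNorm{\hat\xi}{2} \leq 3\RMetric{\hat\xi}{\hat\xi}^{1/2}\).
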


\begin{proof} Throughout the proof of this lemma, we will make extensive use of two inequalities. The first is the bound:
\begin{align}
\MNorm{\lefri{I + E}^{-1}}{p} \leq \lefri{1 - \MNorm{E}{p}}^{-1}, \label{InvIneq1}
\end{align}%
if \(\MNorm{E}{p} < 1\), and the second is the bound:
\begin{align}
\MNorm{\lefri{A + E}^{-1} - A^{-1}}{p} \leq \MNorm{E}{p}\MNorm{A^{-1}}{p}^{2} \lefri{1 - \MNorm{A^{-1}E}{p}}^{-1} \label{InvIneq2}
\end{align}%
which generalizes (\ref{InvIneq1}). We begin with%
\begin{align}
\MNorm{\Cay{\eta} - \Cay{\nu}}{2} & = \MNorm{\lefri{I - \eta}\lefri{I + \eta}^{-1} - \lefri{I - \nu}\lefri{I + \nu}^{-1}}{2} \nonumber \\
& = \MNorm{\lefri{I - \eta}\lefri{I + \eta}^{-1} - \lefri{I - \eta}\lefri{I + \nu}^{-1} + \lefri{I - \eta}\lefri{I + \nu}^{-1} - \lefri{I - \nu}\lefri{I + \nu}^{-1}}{2} \nonumber \\
& = \MNorm{\lefri{I - \eta}\left[\lefri{I + \eta}^{-1} - \lefri{I + \nu}^{-1}\right] + \left[\lefri{I - \eta} - \lefri{I - \nu}\right]\lefri{I + \nu}^{-1}}{2} \nonumber\\
& = \MNorm{\lefri{I - \eta}\left[\lefri{I + \eta}^{-1} - \lefri{I + \nu}^{-1}\right] + \left[\nu - \eta\right]\lefri{I + \nu}^{-1}}{2}\nonumber\\
& \leq \MNorm{\lefri{I - \eta}\left[\lefri{I + \eta}^{-1} - \lefri{I + \nu}^{-1}\right]}{2} + \MNorm{\left[\nu - \eta\right]\lefri{I + \nu}^{-1}}{2}. \label{CayleyChartIneq}
\end{align}%
Considering the term \(\left[\nu - \eta\right]\lefri{I + \nu}^{-1}\), we make use of (\ref{InvIneq1}) to see%
\begin{align}
\MNorm{\left[\nu - \eta\right]\lefri{I + \nu}^{-1}}{2} & \leq \MNorm{\nu - \eta}{2}\MNorm{\lefri{I + \nu}^{-1}}{2} \nonumber \\
& \leq \lefri{1 - \MNorm{\nu}{2}}^{-1}\MNorm{\eta - \nu}{2}. \label{ChartTermOne}
\end{align}%
Next, considering the term \(\MNorm{\lefri{I - \eta}\left[\lefri{I + \eta}^{-1} - \lefri{I + \nu}^{-1}\right]}{2}\),%
\begin{align}
\MNorm{\lefri{I - \eta}\left[\lefri{I + \eta}^{-1} - \lefri{I + \nu}^{-1}\right]}{2} & \leq \MNorm{I - \eta}{2} \MNorm{\lefri{I + \eta}^{-1} - \lefri{I + \nu}^{-1}}{2}\nonumber \\
& = \MNorm{I - \eta}{2} \MNorm{\lefri{I + \nu + \lefri{\eta - \nu}}^{-1} - \lefri{I + \nu}^{-1}}{2}. \label{LG_Denom_Ineq5}
\end{align}%
Applying (\ref{InvIneq2}), with \(E = \eta - \nu\) and \(A = I + \nu\),
\begin{align}
\MNorm{\lefri{I + \nu +\lefri{\eta - \nu}}^{-1} - \lefri{I + \nu}^{-1}}{2} & \leq \MNorm{\eta - \nu}{2}\MNorm{\lefri{I + \nu}^{-1}}{2}^{2}\lefri{1 - \MNorm{\lefri{I + \nu}^{-1}\lefri{\eta - \nu}}{2}}^{-1}. \label{LG_Denom_Ineq3}
\end{align}%
But%
\begin{align*}
1 - \MNorm{\lefri{I + \nu}^{-1}\lefri{\eta - \nu}}{2} &\geq 1 - \MNorm{\lefri{I + \nu}^{-1}}{2}\MNorm{\lefri{\eta - \nu}}{2}\\
& \geq 1 - \lefri{1 - \MNorm{\nu}{2}}^{-1}\MNorm{\lefri{\eta - \nu}}{2}
\end{align*}%
which implies%
\begin{align*}
\lefri{1 - \MNorm{\lefri{I + \nu}^{-1}\lefri{\eta - \nu}}{2}}^{-1} \leq \lefri{1 - \lefri{1 - \MNorm{\nu}{2}}^{-1}\MNorm{\lefri{\eta - \nu}}{2}}^{-1} 
\end{align*}
and%
\begin{align*}
\MNorm{\lefri{I + \nu}^{-1}}{2}^{2} \leq \lefri{1 - \MNorm{\nu}{2}}^{-2},
\end{align*}%
so%
\begin{align}
\MNorm{\lefri{I + \nu}^{-1}}{2}^{2}\lefri{1 - \MNorm{\lefri{I + \nu}^{-1}\lefri{\eta - \nu}}{2}}^{-1} & \leq \lefri{1 - \MNorm{\nu}{2}}^{-2}\lefri{1 - \lefri{1 - \MNorm{\nu}{2}}^{-1}\MNorm{\lefri{\eta - \nu}}{2}}^{-1} \nonumber \\
& \leq \lefri{1 - \MNorm{\nu}{2}}^{-1}\lefri{\lefri{1 - \MNorm{\nu}{2}} - \MNorm{\eta - \nu}{2}}^{-1} \nonumber \\
& = \lefri{1 - \MNorm{\nu}{2}}^{-1}\lefri{1 - \MNorm{\nu}{2} - \MNorm{\eta - \nu}{2}}^{-1}  \label{LG_Diff_Ineq}.
\end{align}%
The triangle inequality gives%
\begin{align*}
\MNorm{\eta - \nu}{2} \leq \MNorm{\eta}{2} + \MNorm{\nu}{2}
\end{align*}%
and thus%
\begin{align}
1 - \MNorm{\eta}{2} - \MNorm{\eta - \nu}{2} &\geq 1 - 2\MNorm{\eta}{2} - \MNorm{\nu}{2}\nonumber \\
\lefri{1 - \MNorm{\eta}{2} - \MNorm{\eta - \nu}{2}}^{-1} &\leq \lefri{1 - 2\MNorm{\eta}{2} - \MNorm{\nu}{2}}^{-1}. \label{LG_Denom_Ineq1}
 \end{align}%
So applying (\ref{LG_Denom_Ineq1}) to (\ref{LG_Diff_Ineq}) gives,
\begin{align}
\MNorm{\lefri{I + \nu}^{-1}}{2}^{2}\lefri{1 - \MNorm{\lefri{I + \nu}^{-1}\lefri{\eta - \nu}}{2}}^{-1} & \leq \lefri{1 - \MNorm{\nu}{2}}^{-1}\lefri{1 - 2\MNorm{\eta}{2} - \MNorm{\nu}{2}}^{-1}. \label{LG_Denom_Ineq2},
\end{align}%
then applying (\ref{LG_Denom_Ineq2}) to (\ref{LG_Denom_Ineq3}) gives,
\begin{align}
\MNorm{\lefri{I + \nu +\lefri{\eta - \nu}}^{-1} - \lefri{I + \nu}^{-1}}{2} & \leq \MNorm{\eta - \nu}{2}\lefri{1 - \MNorm{\nu}{2}}^{-1}\lefri{1 - 2\MNorm{\eta}{2} - \MNorm{\nu}{2}}^{-1} \label{LG_Denom_Ineq4},
\end{align}
and finally applying (\ref{LG_Denom_Ineq4}) to (\ref{LG_Denom_Ineq5}) yields
\begin{align}
\MNorm{\lefri{I - \eta}\left[\lefri{I + \eta}^{-1} - \lefri{I + \nu}^{-1}\right]}{2} &\leq \MNorm{I - \eta}{2}\lefri{1 - \MNorm{\nu}{2}}^{-1}\lefri{1 - 2\MNorm{\eta}{2} - \MNorm{\nu}{2}}^{-1}\MNorm{\eta - \nu}{2} \nonumber \\
& \leq \lefri{1 - \MNorm{\eta}{2}}^{-1}\lefri{1 - \MNorm{\nu}{2}}^{-1}\lefri{1 - 2\MNorm{\eta}{2} - \MNorm{\nu}{2}}^{-1}\MNorm{\eta - \nu}{2} \label{ChartTermTwo}
\end{align}
Substituting (\ref{ChartTermOne}) and (\ref{ChartTermTwo}) into (\ref{CayleyChartIneq}), we see%
\begin{align*}
\MNorm{\Cay{\eta} - \Cay{\nu}}{2} \leq \left[\lefri{1 - \MNorm{\nu}{2}}^{-1} + \lefri{1 - \MNorm{\eta}{2}}^{-1}\lefri{1 - \MNorm{\nu}{2}}^{-1}\lefri{1 - 2\MNorm{\eta}{2} - \MNorm{\nu}{2}}^{-1}\right]\MNorm{\eta - \nu}{2}.
\end{align*}%
Hence, so as long as \(2\left\|\eta\right\| + \left\|\nu\right\| < \CondC < 1 \)%
\begin{align*}
\MNorm{\Cay{\eta} - \Cay{\nu}}{2} \leq \GroupC \MNorm{\eta - \nu}{2}.
\end{align*}%
where%
\begin{align*}
\GroupC = \left[\lefri{1 - \MNorm{\nu}{2}}^{-1} + \lefri{1 - \MNorm{\eta}{2}}^{-1}\lefri{1 - \MNorm{\nu}{2}}^{-1}\lefri{1 - 2\MNorm{\eta}{2} - \MNorm{\nu}{2}}^{-1}\right].
\end{align*}%
If we make the stronger assumption that \(\MNorm{\eta - \nu}{2} < \epsilon\), we can weaken the assumption to simply \(\MNorm{\eta}{2} + \epsilon < \CondC < 1\) and \(\MNorm{\nu}{2} + \epsilon < \CondC < 1\). As we expect the error between the two curves in the Lie algebra to be orders of magnitude smaller than the magnitude of the Lie algebra elements, this is a reasonable assumption to make.

Next, to examine \(\MNorm{D_{\eta}\Cay{\dot{\eta}} - D_{\nu}\Cay{\dot{\nu}}}{2}\), we consider the definition%
\begin{align*}
D_{X}\Cay{Y} = -Y\lefri{I + X}^{-1} - \lefri{I - X}\lefri{I + X}^{-1}Y\lefri{I + X}^{-1}.
\end{align*}%
Using this definition,%
\begin{align*}
\MNorm{D_{\eta}\Cay{\dot{\eta}} - D_{\nu}\Cay{\dot{\nu}}}{2} &= \left\|-\dot{\eta}\lefri{I + \eta}^{-1} - \lefri{I - \eta}\lefri{I + \eta}^{-1}\dot{\eta}\lefri{I + \eta}^{-1} + \right.\\
& \left.\hspace{5em}\dot{\nu}\lefri{I + \nu}^{-1} + \lefri{I - \nu}\lefri{I + \nu}^{-1}\dot{\nu}\lefri{I + \nu}^{-1}\right\|_{2} \\
& \leq \MNorm{\dot{\nu}\lefri{I + \nu}^{-1} - \dot{\eta}\lefri{I + \eta}^{-1}}{2} + \\
& \hspace{5em} \MNorm{\lefri{I - \nu}\lefri{I + \nu}^{-1}\dot{\nu}\lefri{I + \nu}^{-1} - \lefri{I - \eta}\lefri{I + \eta}^{-1}\dot{\eta}\lefri{I + \eta}^{-1}}{2}.
\end{align*}%
Considering
\begin{align*}
\MNorm{\dot{\nu}\lefri{I + \nu}^{-1} - \dot{\eta}\lefri{I + \eta}^{-1}}{2} &= \MNorm{\dot{\nu}\lefri{I + \nu}^{-1} - \dot{\eta}\lefri{I + \nu}^{-1} + \dot{\eta}\lefri{I + \nu}^{-1} - \dot{\eta}\lefri{I + \eta}^{-1}}{2} \\
& = \MNorm{\lefri{\dot{\nu} - \dot{\eta}}\lefri{I + \nu}^{-1} + \dot{\eta}\left[\lefri{I + \nu}^{-1} - \lefri{I + \eta}^{-1}\right]}{2} \\
& \leq \MNorm{\lefri{I + \nu}^{-1}}{2}\MNorm{\dot{\eta} - \dot{\nu}}{2} + \MNorm{\dot{\eta}}{2}\MNorm{\lefri{I + \nu}^{-1} - \lefri{I + \eta}^{-1}}{2} \\
& \leq \lefri{1 - \MNorm{\nu}{2}}^{-1}\MNorm{\dot{\eta} - \dot{\nu}}{2} + \MNorm{\dot{\eta}}{2}\lefri{\lefri{1 - \MNorm{\nu}{2}}\lefri{1 - \MNorm{\nu}{2} - \MNorm{\nu - \eta}{2}}}^{-1}\MNorm{\eta - \nu}{2},
\end{align*}%
where we have made use of (\ref{LG_Denom_Ineq4}) to bound \(\MNorm{\lefri{I + \nu}^{-1} - \lefri{I + \eta}^{-1}}{2}\). Now, considering the second term, we first note,
\begin{align*}
\MNorm{\lefri{I - \nu}\lefri{I + \nu}^{-1}\dot{\nu}\lefri{I + \nu}^{-1} - \lefri{I - \eta}\lefri{I + \eta}^{-1}\dot{\eta}\lefri{I + \eta}^{-1}}{2} = \MNorm{\Cay{\nu}\dot{\nu}\lefri{I + \nu}^{-1} - \Cay{\eta}\dot{\eta}\lefri{I + \eta}^{-1}}{2}.
\end{align*}%
Using this, we see%
\begin{align}
\MNorm{\Cay{\nu}\dot{\nu}\lefri{I + \nu}^{-1} - \Cay{\eta}\dot{\eta}\lefri{I + \eta}^{-1}}{2} & = \left\|\Cay{\nu}\dot{\nu}\lefri{I + \nu}^{-1} - \Cay{\nu}\dot{\nu}\lefri{I + \eta}^{-1} \right. \nonumber \\
& \hspace{5em} + \left.\Cay{\nu}\dot{\nu}\lefri{I + \eta}^{-1} - \Cay{\eta}\dot{\eta}\lefri{I + \eta}^{-1}\right\|_{2}. \nonumber\\
& \leq \MNorm{\Cay{\nu}\dot{\nu}\lefri{I + \nu}^{-1} - \Cay{\nu}\dot{\nu}\lefri{I + \eta}^{-1}}{2} \nonumber \\
& \hspace{5em} + \MNorm{\Cay{\nu}\dot{\nu}\lefri{I + \eta}^{-1} - \Cay{\eta}\dot{\eta}\lefri{I + \eta}^{-1}}{2} \label{DChartTermOne}
\end{align}%
For the first term in (\ref{DChartTermOne}),%
\begin{align}
\MNorm{\Cay{\nu}\dot{\nu}\lefri{I + \nu}^{-1} - \Cay{\nu}\dot{\nu}\lefri{I + \eta}^{-1}}{2} &= \MNorm{\Cay{\nu}\dot{\nu}\left[\lefri{I + \nu}^{-1} - \lefri{I + \eta}^{-1}\right]}{2} \nonumber\\
& \leq \MNorm{\Cay{\nu}}{2} \MNorm{\dot{\nu}}{2} \MNorm{\lefri{I + \nu}^{-1} - \lefri{I + \eta}^{-1}}{2} \nonumber\\
& \leq \MNorm{\dot{\nu}}{2} \lefri{\lefri{1 - \MNorm{\nu}{2}}\lefri{1 - \MNorm{\nu}{2} - \MNorm{\nu - \eta}{2}}}^{-1}\MNorm{\eta - \nu}{2}. \label{DChartTermTwo}
\end{align}%
where we once again have made use of (\ref{LG_Denom_Ineq4}) to bound \(\MNorm{\lefri{I + \nu}^{-1} - \lefri{I + \eta}^{-1}}{2}\) and the fact that \(\Cay{\nu}\) is orthogonal to set \(\MNorm{\Cay{\nu}}{2} = 1\). Now, considering the second term in (\ref{DChartTermOne}),
\begin{align}
\MNorm{\Cay{\nu}\dot{\nu}\lefri{I + \eta}^{-1} - \Cay{\eta}\dot{\eta}\lefri{I + \eta}^{-1}}{2} & = \MNorm{\lefri{\Cay{\nu}\dot{\nu} - \Cay{\eta}\dot{\eta}}\lefri{I + \eta}^{-1}}{2} \nonumber\\
& \leq \MNorm{\Cay{\nu}\dot{\nu} - \Cay{\eta}\dot{\eta}}{2}\MNorm{\lefri{I + \eta}^{-1}}{2} \label{DChartTermThree} 
\end{align}%
and additionally,%
\begin{align}
\MNorm{\Cay{\nu}\dot{\nu} - \Cay{\eta}\dot{\eta}}{2} &= \MNorm{\Cay{\nu}\dot{\nu} - \Cay{\eta}\dot{\nu} + \Cay{\eta}\dot{\nu} - \Cay{\eta}\dot{\eta}}{2}\nonumber \\
&\leq \MNorm{\Cay{\nu} - \Cay{\eta}}{2} \MNorm{\dot{\nu}}{2} + \MNorm{\Cay{\eta}}{2}\MNorm{\dot{\nu} - \dot{\eta}}{2}\nonumber\\
& \leq \GroupC \MNorm{\dot{\nu}}{2} \MNorm{\eta - \nu}{2} + \MNorm{\dot{\eta} - \dot{\nu}}{2}. \label{DChartTermFour}
\end{align}%
Combining (\ref{DChartTermTwo}), (\ref{DChartTermThree}), (\ref{DChartTermFour}) in (\ref{DChartTermOne}) yields%
\begin{align*}
\MNorm{D_{\eta}\Cay{\dot{\eta}} - D_{\nu}\Cay{\dot{\nu}}}{2} \leq \AlgeC\MNorm{\eta - \nu}{2} + \AlgeGC\MNorm{\dot{\eta} - \dot{\nu}}{2}
\end{align*}%
with constants%
\begin{align*}
\AlgeC &= \lefri{1 - \MNorm{\nu}{2} - \MNorm{\nu - \eta}{2}}^{-1}\lefri{\MNorm{\dot{\eta}}{2}\lefri{1 - \MNorm{\eta}{2}}^{-1} + \MNorm{\dot{\nu}}{2}\lefri{1 - \MNorm{\nu}{2}}^{-1}} + \GroupC\MNorm{\dot{\nu}}{2}\\
\AlgeGC & = 1 + \lefri{1 - \MNorm{\nu}{2}}^{-1}.
\end{align*}%
To complete the proof of the lemma, we need to establish a bound on the matrix two norm from the metric on the Lie algebra. For arbitrary algebra element \(\xi\), standard vector and matrix norm equivalences yield%
\begin{align*}
\MNorm{\hat{\xi}}{2} \leq \sqrt{3}\MNorm{\hat{\xi}}{1} \leq \sqrt{3} \MNorm{\xi}{1} \leq 3 \MNorm{\xi}{2} = 3 \RMetric{\hat{\xi}}{\hat{\xi}}^{\frac{1}{2}}
\end{align*}
which completes the proof. 
\end{proof}

It should be noted that as \(\MNorm{\nu}{2}\) or \(\MNorm{\eta}{2}\) approaches \(1\), \(\GroupC\), \(\AlgeC\) and \(\AlgeGC\) increase without bound. This amounts to a time step restriction for the method; if the configuration changes too dramatically during the time step, the chart will become poorly conditioned and the numerical solution will degrade. However, as long as \(\MNorm{\nu}{2} < \CondC\) and \(\MNorm{\eta}{2} < \CondC\) on each time step for some \(\CondC < 1\) which is independent of the number of the time step, these constants will remain bounded and the natural chart will be well conditioned. 

\subsubsection{Choice of Basis Functions}

The final feature of the construction of our Cayley transform Lie group Galerkin variational integrator is the choice of function space \(\AFdFSpace\) for approximation of curves in the Lie algebra. Since the curves in the Lie algebra \(\soT\) that we use have a natural correspondence with curves in \(\mathbb{R}^{3}\) through the hat map, constructing these curves reduces to choosing an approximation space for curves in \(\mathbb{R}^{3}\).

We make the choice of polynomials of degree at most \(n + 1\) for \(\AFdFSpace\). We choose polynomials because approximation theory and particularly the theory of spectral numerical methods, see \citet{Tr2000}, tells us that polynomials have excellent convergence under both \(h\) and \(n\) refinement to smooth curves, and in particular, analytic curves. For the basis functions \(\left\{\phi_{i}\lefri{t}\right\}_{i=1}^{n}\), we choose \(\phi_{i}\lefri{t}\) to be the Lagrange interpolation polynomial for the \(i\)-th of \(n\) Chebyshev points rescaled to the interval \(\left[0,h\right]\), that is%
\begin{align*}
\phi_{i}\lefri{t} = \frac{\prod_{j=1,j\neq i}^{n}\lefri{t- t_{j}}}{\prod_{j=1,j \neq i}^{n}\lefri{t_{i} - t_{j}}}
\end{align*}%
for \(t_{i} = \frac{h}{2}\cos\lefri{\frac{i \pi}{n}} + \frac{h}{2}\). While our convergence theory does not depend on the choice of polynomial basis, there are two major benefits for this choice of basis functions. The first is that these polynomials interpolate \(0\) and \(h\), which greatly simplifies the computation of \(D_{1}\lefri{R_{k},R_{k+1}}\) and \(D_{2}\lefri{R_{k-1},R_{k}}\). The second is that this choice of basis function leads to methods which are more stable than other choices of interpolation points, most likely because of the excellent stability properties of the interpolation polynomials that are constructed from them. The interested reader is referred to \citet{Tr2000} and \citet{Bo2001} and the references therein for more details on spectral numerical methods.

\subsubsection{Choice of Quadrature Rule}

The final selection we must make when constructing the integrator is a choice of quadrature rule. We choose to use Gaussian quadrature, mostly because this quadrature rule is optimally accurate in the number of points and because it is simple to compute higher order Gaussian quadrature points and weights by solving a small eigenvalue problem. However, it is possible to use other rules, and we make no claim that our choice is the best for our choice of parameters.

\subsection{Discrete Euler Poincar\'{e} Equations}

While in \S \ref{ISDEPSubsection} we presented a general form of the internal stage discrete Euler-Poincar\'{e} equations in coordinate-free notation, direct construction of these equations is probably not the easiest way to formulate a numerical method. This is because it requires the computation and composition of many different functions, some of which may be complicated (for example, working out \(\darkD{\alpha,\dot{\alpha}}\darkD{\alpha}\Phi\lefri{\darkD{\mathfrak{q}_{i}}\alpha,\darkD{\mathfrak{q}_{i}}\dot{\alpha}}\) for the Cayley transform is straightforward, but also slightly obnoxious). An alternative approach, to which we alluded in \S \ref{ISDEPSubsection}, is to compute the discrete action in coordinates, and then explicitly compute the stationarity conditions for this discrete action. We do this here for the rigid body equations.

For the construction of the Lie group Galerkin variational integrator for the rigid body, we make use of the following functions:%
\begin{align}
\GalR\lefri{\left\{\xi^{i}\right\}_{i=1}^{n},t} &= R_{k}\Phi\lefri{\sum_{i=1}^{n}\hat{\xi}^{i}\phi_{i}\lefri{t}} \nonumber\\
L\lefri{R\lefri{t},\dot{R}\lefri{t}} &= \mbox{tr}\lefri{\dot{R}\lefri{t}^{T}R\lefri{t}J_{d}R\lefri{t}^{T}\dot{R}\lefri{t}} \nonumber\\
L_{d}\lefri{R_{k},R_{k+1}} &= \ext_{\CayGalargsf{R_{k}}{R_{k+1}}} h \sum_{j=1}^{m} b_{j}L\lefri{\GalR\lefri{\cjh},\dGalR\lefri{\cjh}} \label{CayDEP}
\end{align} %
where \(\xi^{i} \in \mathbb{R}^{3}\). Since the curve \(\GalR\lefri{t}\) is a function on \(n\) points in \(\mathbb{R}^{3}\), denoting \(\xi^{i} = \lefri{\xi^{i}_{a},\xi^{i}_{b},\xi^{i}_{c}}\), we can write (\ref{CayDEP}) as%
\begin{align*}
L_{d}\lefri{R_{k},R_{k+1}} &= \ext_{\xi^{0} = 0, \hat{\xi}^{n} = \Phi^{-1}\lefri{R_{k}^{T}R_{k+1}}} h \sum_{j=1}^{m} b_{j} \frac{2}{\lefri{1 + \MNorm{\xi\lefri{\cjh}}{2}^{2}}^{2}}\left(I_{1}\varphi\lefri{\xi_{c},\xi_{a},\xi_{b}}^{2}  + \right.\\
& \hspace{18em} \left. I_{2} \varphi\lefri{\xi_{b},\xi_{c},\xi_{a}}^{2} + I_{3} \varphi\lefri{\xi_{a},\xi_{b},\xi_{c}}^{2}\right)
\end{align*}%
where%
\begin{align*}
\varphi\lefri{\xi_{a},\xi_{b},\xi_{c}} & =  \dot{\xi}_{a}\lefri{\cjh} + \xi_{b}\lefri{\cjh}\dot{\xi}_{c}\lefri{\cjh} - \xi_{c}\lefri{\cjh}\dot{\xi}_{b}\lefri{\cjh},
\end{align*}%
\(\varphi\lefri{\xi_{c},\xi_{a},\xi_{b}}\) and \(\varphi\lefri{\xi_{b},\xi_{c},\xi_{a}}\) are defined analogously, and 
\begin{align*}
I_{i} &= \sum_{j \neq i} \lefri{J_{d}}_{jj} \\
\xi_{x}\lefri{t} &= \sum_{i}^{n} \xi_{x}^{i} \phi_{i}\lefri{t} \\
\xi\lefri{t} &= \lefri{\xi_{a}\lefri{t},\xi_{b}\lefri{t}, \xi_{c}\lefri{t}}.
\end{align*}%
Forming the action sum as a function of the \(\xi^{i}\),%
\begin{align*}
\mathbb{S}_{d}\lefri{\left\{\xi^{i}\right\}_{i=1}^{n}} = h \sum_{j=1}^{m} b_{j} \frac{2}{\lefri{1 + \MNorm{\xi\lefri{\cjh}}{2}^{2}}^{2}}\left(I_{1}\varphi\lefri{\xi_{c},\xi_{a},\xi_{b}}^{2}  + I_{2} \varphi\lefri{\xi_{b},\xi_{c},\xi_{a}}^{2} + I_{3} \varphi\lefri{\xi_{a},\xi_{b},\xi_{c}}^{2}\right)
\end{align*}
computing its variational derivative from \(\xi^{i}\) directly and setting it equal to \(0\),%
\begin{align*}
\left.\frac{\mbox{d}}{\mbox{d}\epsilon}\mathbb{S}_{d}\lefri{\left\{\xi^{i} + \epsilon \delta \xi^{i}\right\}_{i=1}^{n}} \right|_{\epsilon = 0}= 0,
\end{align*}
gives the internal stage discrete Euler-Poincar\'{e} equations,%
\begin{subequations}
\begin{align}
h\sum_{j=1}^{m}\bj4\lefri{1 + \MNorm{\xi}{2}^{2}}^{-2} & \left[\lefri{I_{3}\varphi\lefri{\xi_{c},\xi_{a},\xi_{b}}}\lefri{-2\lefri{1+ \MNorm{\xi}{2}^{2}}^{-1}\varphi\lefri{\xi_{c},\xi_{a},\xi_{b}}\xi_{a}\phi_{i} + \dot{\xi}_{b}\phi_{i} - \xi_{b}\dot{\phi}_{i}} \right. + & \label{DEPone} \\
& \lefri{I_{2}\varphi\lefri{\xi_{b},\xi_{c},\xi_{a}}}\lefri{-2\lefri{1+ \MNorm{\xi}{2}^{2}}^{-1}\varphi\lefri{\xi_{b},\xi_{c},\xi_{a}}\xi_{a}\phi_{i} + \xi_{c}\dot{\phi}_{i} - \dot{\xi}_{c}\phi_{i}} + &  \nonumber\\
& \left. \lefri{I_{1}\varphi\lefri{\xi_{a},\xi_{b},\xi_{c}}}\lefri{-2\lefri{1+ \MNorm{\xi}{2}^{2}}^{-1}\varphi\lefri{\xi_{a},\xi_{b},\xi_{c}}\xi_{a}\phi_{i} + \dot{\phi}_{i}} \right] & = 0 \nonumber\\
h\sum_{j=1}^{m}\bj4\lefri{1 + \MNorm{\xi}{2}^{2}}^{-2} & \left[\lefri{I_{3}\varphi\lefri{\xi_{c},\xi_{a},\xi_{b}}}\lefri{-2\lefri{1+ \MNorm{\xi}{2}^{2}}^{-1}\varphi\lefri{\xi_{c},\xi_{a},\xi_{b}}\xi_{b}\phi_{i} + \xi_{a}\dot{\phi}_{i} - \dot{\xi}_{a}\phi_{i}} \right. + & \label{DEPtwo}  \\
& \lefri{I_{2}\varphi\lefri{\xi_{b},\xi_{c},\xi_{a}}}\lefri{-2\lefri{1+ \MNorm{\xi}{2}^{2}}^{-1}\varphi\lefri{\xi_{b},\xi_{c},\xi_{a}}\xi_{b}\phi_{i} + \dot{\phi}_{i}} + &  \nonumber \\
& \left. \lefri{I_{1}\varphi\lefri{\xi_{a},\xi_{b},\xi_{c}}}\lefri{-2\lefri{1+ \MNorm{\xi}{2}^{2}}^{-1}\varphi\lefri{\xi_{a},\xi_{b},\xi_{c}}\xi_{b}\phi_{i} + \dot{\xi}_{c}\phi_{i} - \xi_{c}\dot{\phi}_{i}} \right] & = 0 \nonumber \\
h\sum_{j=1}^{m}b_{j}4\lefri{1 + \MNorm{\xi}{2}^{2}}^{-2} & \left[\lefri{I_{3}\varphi\lefri{\xi_{c},\xi_{a},\xi_{b}}}\lefri{-2\lefri{1+ \MNorm{\xi}{2}^{2}}^{-1}\varphi\lefri{\xi_{c},\xi_{a},\xi_{b}}\xi_{c}\phi_{i} + \dot{\phi}_{i}} \right. + & \label{DEPthree}\\
& \lefri{I_{2}\varphi\lefri{\xi_{b},\xi_{c},\xi_{a}}}\lefri{-2\lefri{1+ \MNorm{\xi}{2}^{2}}^{-1}\varphi\lefri{\xi_{b},\xi_{c},\xi_{a}}\xi_{c}\phi_{i} + \dot{\xi}_{a}\phi_{i} - \xi_{a}\dot{\phi}_{i}} + & \nonumber \\
& \left. \lefri{I_{1}\varphi\lefri{\xi_{a},\xi_{b},\xi_{c}}}\lefri{-2\lefri{1+ \MNorm{\xi}{2}^{2}}^{-1}\varphi\lefri{\xi_{a},\xi_{b},\xi_{c}}\xi_{c}\phi_{i} + \xi_{b}\dot{\phi}_{i} - \dot{\xi}_{b}\phi_{i}} \right] & = 0, \nonumber
\end{align}
\end{subequations}%
for \(i = 2,...,n-1\), and where we have suppressed the \(t\) argument on all of our functions. Solving these equations, along with the condition \(\xi^{1} = 0\) and the discrete Euler-Poincar\'{e} equations%
\begin{align}
D_{1}L_{d}\lefri{R_{k},R_{k+1}} + D_{2}L_{d}\lefri{R_{k-1},R_{k}} = 0, \label{DEPBoundCond}
\end{align}%
which we will discuss in \S \ref{MomMatching}, yields \(\tilde{R}\lefri{t}\), the stationary point of the discrete action. Using this stationary point, computing \(\tilde{R}\lefri{h} = R_{k+1}\) gives us the next step of our one-step map.%
\subsubsection{Momentum Matching} \label{MomMatching}

As we mentioned in our general derivation of the discrete Euler-Poincar\'{e} equations, (\ref{DEPBoundCond}) must be treated with care. We described in an expedient method for computing \(D_{1}L_{d}\lefri{R_{k},R_{k+1}}\) so that the result is compatible with our change of natural charts. We will provide an explicit example below.

We already know the expression for \(D_{1}L_{d}\lefri{R_{k},R_{k+1}}\) for the coordinates in the current natural chart, the vector of the form (\ref{DEPone}) -- (\ref{DEPthree}), with \(i = 1\). This is the map \(\frac{\partial L_{d}}{\partial \xi_{k}}\) described in \S \ref{DEPSection}. Now, we need to compute an expression for \(\lambda_{k}\) and \(\frac{\partial \xi_{k}}{\partial \lambda_{k}}\). Given \(\xi_{0} = \lefri{\xi_{a}^{0}, \xi_{b}^{0}, \xi_{c}^{0}}\) and \(\xi_{k} = \lefri{\xi_{a}, \xi_{b}, \xi_{c}}\), we compute \(\lambda\) by%
\begin{align*}
\hat{\lambda} = \Phi^{-1}\lefri{\Phi\lefri{\hat{\xi_{0}}}\Phi\lefri{\hat{\xi_{k}}}} 
\end{align*}%
which gives in coordinates \(\lambda = \lefri{\lambda_{a},\lambda_{b},\lambda_{c}}\),%
\begin{align*}
\lambda_{a} &= \frac{-\xi_{a} - \xi_{a}^{0} + \xi_{c}\xi_{b}^{0} - \xi_{b}\xi_{c}^{0}}{-1 + \xi_{a}^{0}\xi_{a} + \xi_{b}^{0}\xi_{b} + \xi_{c}^{0}\xi_{c}}\\
\lambda_{b} &= \frac{-\xi_{b} - \xi_{b}^{0} + \xi_{a}\xi_{c}^{0} - \xi_{c}\xi_{a}^{0}}{-1 + \xi_{a}^{0}\xi_{a} + \xi_{b}^{0}\xi_{b} + \xi_{c}^{0}\xi_{c}}\\
\lambda_{c} &= \frac{-\xi_{c} - \xi_{c}^{0} + \xi_{b}\xi_{a}^{0} - \xi_{a}\xi_{b}^{0}}{-1 + \xi_{a}^{0}\xi_{a} + \xi_{b}^{0}\xi_{b} + \xi_{c}^{0}\xi_{c}}.
\end{align*}%
Now, we recompute \(\xi_{k}\) in terms of \(\lambda\),%
\begin{align*}
\hat{\xi}_{k} = \Phi^{-1}\lefri{\lefri{\Phi\lefri{\hat{\xi}_{0}}}^{-1}\Phi\lefri{\hat{\lambda}}}
\end{align*}%
which, when expressed in coordinates \(\xi_{k} = \lefri{\xi_{a},\xi_{b},\xi_{c}}\), gives%
\begin{align*}
\xi_{a} &= \frac{\lambda_{a} - \xi_{a}^{0} + \lambda_{c}\xi_{b}^{0} - \lambda_{b}\xi_{c}^{0}}{1 + \lambda_{a}\xi_{a}^{0} + \lambda_{b}\xi_{b}^{0} + \lambda_{c}\xi_{c}^{0}}\\
\xi_{b} &= \frac{\lambda_{b} - \xi_{b}^{0} + \lambda_{a}\xi_{c}^{0} - \lambda_{c}\xi_{a}^{0}}{1 + \lambda_{a}\xi_{a}^{0} + \lambda_{b}\xi_{b}^{0} + \lambda_{c}\xi_{c}^{0}}\\
\xi_{c} &= \frac{\lambda_{c} - \xi_{c}^{0} + \lambda_{b}\xi_{a}^{0} - \lambda_{a}\xi_{b}^{0}}{1 + \lambda_{a}\xi_{a}^{0} + \lambda_{b}\xi_{b}^{0} + \lambda_{c}\xi_{c}^{0}}.
\end{align*}%
So, to compute \(D_{1}L_{d}\lefri{R_{k},R_{k+1}}\) = \(\lefri{\frac{\partial L_{d}}{\partial \lambda_{a}},\frac{\partial L_{d}}{\partial \lambda_{a}},\frac{\partial L_{d}}{\partial \lambda_{a}}}\), we can take the easily computed expression \(\frac{\partial L_{d}}{\partial \xi_{k}}\) and apply a change of coordinates computation,
\begin{align*}
\frac{\partial L_{d}}{\partial\lambda_{a}} &= \frac{\partial L_{d}}{\partial \xi_{a}} \frac{\partial \xi_{a}}{\partial \lambda_{a}} + \frac{\partial L_{d}}{\partial \xi_{b}} \frac{\partial \xi_{b}}{\partial \lambda_{a}} + \frac{\partial L_{d}}{\partial \xi_{c}} \frac{\partial \xi_{c}}{\partial \lambda_{a}} \\
\frac{\partial L_{d}}{\partial\lambda_{b}} &= \frac{\partial L_{d}}{\partial \xi_{a}} \frac{\partial \xi_{a}}{\partial \lambda_{b}} + \frac{\partial L_{d}}{\partial \xi_{b}} \frac{\partial \xi_{b}}{\partial \lambda_{b}} + \frac{\partial L_{d}}{\partial \xi_{c}} \frac{\partial \xi_{c}}{\partial \lambda_{b}} \\
\frac{\partial L_{d}}{\partial\lambda_{c}} &= \frac{\partial L_{d}}{\partial \xi_{a}} \frac{\partial \xi_{a}}{\partial \lambda_{c}} + \frac{\partial L_{d}}{\partial \xi_{c}} \frac{\partial \xi_{b}}{\partial \lambda_{c}} + \frac{\partial L_{d}}{\partial \xi_{c}} \frac{\partial \xi_{c}}{\partial \lambda_{c}},
\end{align*}%
which is the momentum matching condition expressed so that it is compatible with the change of natural charts.%
\section{Numerical Experiments} \label{NumExp}

Thus far, we have discussed the construction of Lie group Galerkin variational integrators, and established bounds on their rate of convergence. We will now turn to several numerical examples to demonstrate that our methods behave in practice as our theory predicts.

\subsection{Cayley Transform Method for the Rigid Body}
In \S \ref{CayCon} we have discussed in great detail a specific construction of a Lie group Galerkin variational integrators for the free rigid body based on the Cayley transform. Based on the convergence results from Theorems \ref{LG_OptConv} and \ref{LG_GeoConv}, we would expect our construction to converge geometrically with \(n\)-refinement and optimally with \(h\)-refinement.

Using MATLAB, we implemented the Lie group Galerkin variational integrator described in \S \ref{CayCon}, using a finite-difference Newton method as a root finder. We used the parameters%
\begin{align*}
J_{d} &= \mbox{diag}\lefri{1.3, 2.1, 1.2}\\
R\lefri{0} &= I \\
R^{T}\lefri{0}\dot{R}\lefri{0} &= \widehat{\lefri{2.0, -1.9, 1.0}}^{T}.
\end{align*}
To establish convergence, we first computed a numerical solution using a low-order splitting method with a very small time step, and once we established that the Lie group Galerkin variational integrator's solution and the splitting method's numerical solutions agreed, we used a Lie group Galerkin variational integrator solution with \(n = 26\) and \(h = 0.5\) as a high-order approximation to the exact solution, and established convergence to this solution. We made this choice of parameters for our approximate exact solution because it appeared that for this choice of parameters, the residual from the nonlinear solver was the dominant source of error, and neither \(h\) nor \(n\) refinement improved our numerical solution.

The results, which are summarized in Figures \ref{fig:GeoConvRigidBody} -- \ref{fig:ConservedQuantitiesRigidBody}, establish the rates of convergence predicted in Theorems \ref{LG_OptConv} and \ref{LG_GeoConv}. For \(n\)-refinement, we see that our integrator did indeed achieve geometric convergence, as can be seen in Figure \ref{fig:GeoConvRigidBody}. However, unlike the vector space method (see \citet{HaLe2012}), we did not observe the difference in convergence rates of the continuous approximation and the one-step map. We suspect this is because until very high accuracy is achieved, the inaccurate boundary conditions due to the one-step map error dominates the continuous approximation error, and the threshold at which the continuous approximation error is greater than the one-step error is related to the time step. While we can take extremely large time steps in the vector space case, in the Lie group case the time step length is limited by the natural chart, and hence we never observe the lower convergence rate of the continuous approximation. WE explore convergence with \(h\)-refinement, see Figure \ref{fig:hConvRigidBody}, and observe the optimal rate of convergence for our construction for even \(n\). However, for odd \(n\), we see convergence at a rate of \(n-1\). We do not have a clear explanation for this.

Now, considering the geometric invariants related to the rigid body, we see that the Cayley transform based method has excellent conservation properties. Figure \ref{fig:ConservedQuantitiesRigidBody} shows one of the classic depictions of geometric invariants for the rigid body, that is the intersection of the two hypersurfaces in momentum space given by the two geometric invariants \(C\lefri{y} = \frac{1}{2}\sum_{i=1}^{3} y_{i}^{2}\) and \(H\lefri{y} = \frac{1}{2} \sum_{i=1}^{3} I^{-1}_{i}y_{i}^{2}\) where \(y\) is the angular momentum of the rigid body. These invariants correspond to the norm of the body fixed angular momentum and the energy, respectively. Discussions of these invariants, and comparable behavior of other methods can be found in \citet{MaRa1999} and \citet{HaLuWa2006} (specifically, see \citet{HaLuWa2006} for a comparison to other numerical methods). Our method has nearly perfect conservation of these invariants.

Additionally, while it is not perfectly conserved, the energy behavior of our method is oscillatory and remains bounded even for very long integration times, as can be seen in Figure \ref{fig:EnergyBehaviorRigidBody}. This type of behavior is typical for variational integrators, and can be understood in terms of backwards error analysis.

\begin{figure}[htpb]
  \centering
  \includegraphics[width = 0.75\textwidth]{./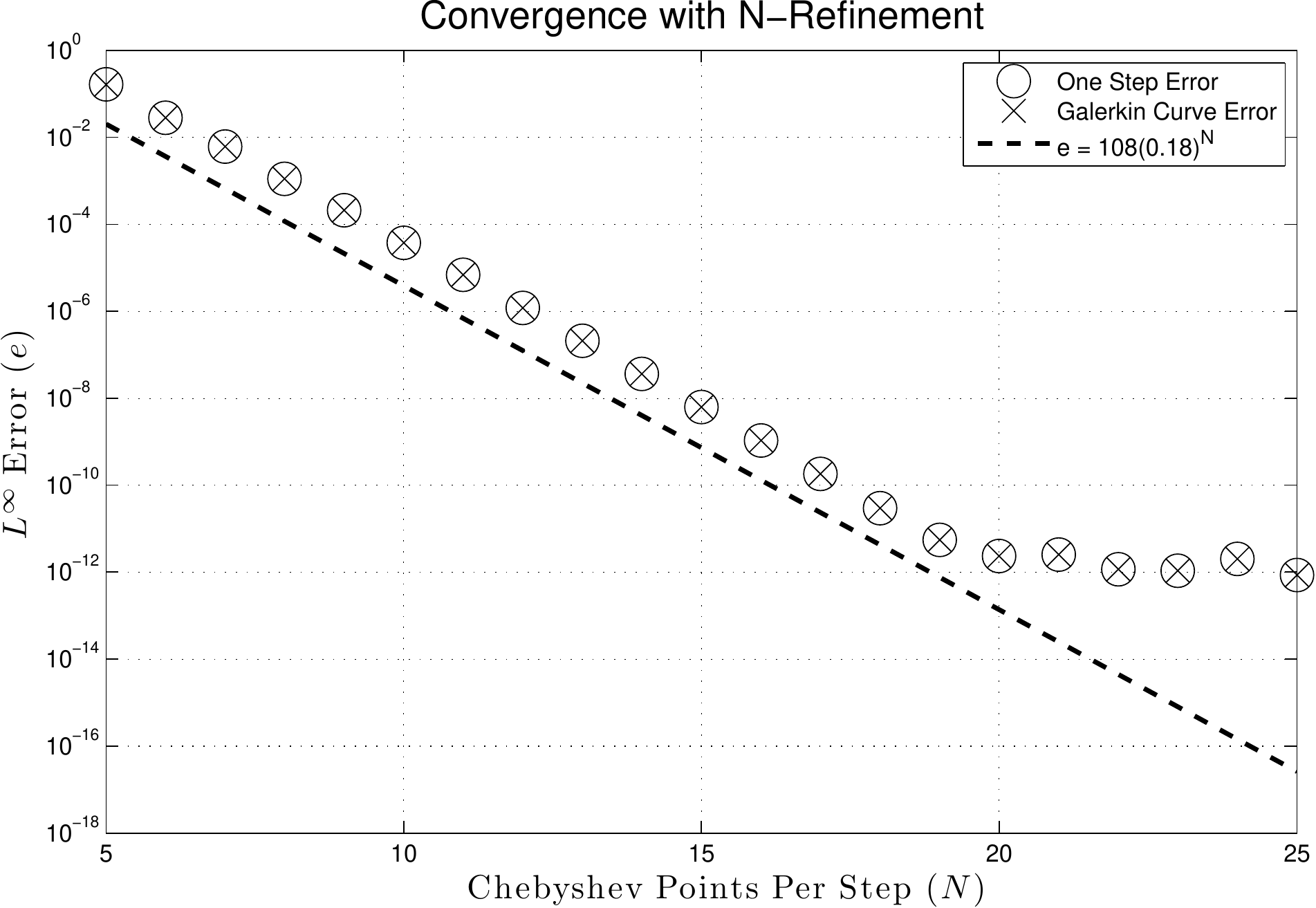}
  \caption{Geometric convergence of the Lie group spectral variational integrator based on the Cayley transform for the rigid body. We use a constant step-size \(h  = 0.5\). Note that the Galerkin curves have the same error as the one-step map, even though they have a theoretical lower rate of convergence.}
  \label{fig:GeoConvRigidBody}
\end{figure}

\begin{figure}[htpb]
  \centering
  \includegraphics[width = 0.75\textwidth]{./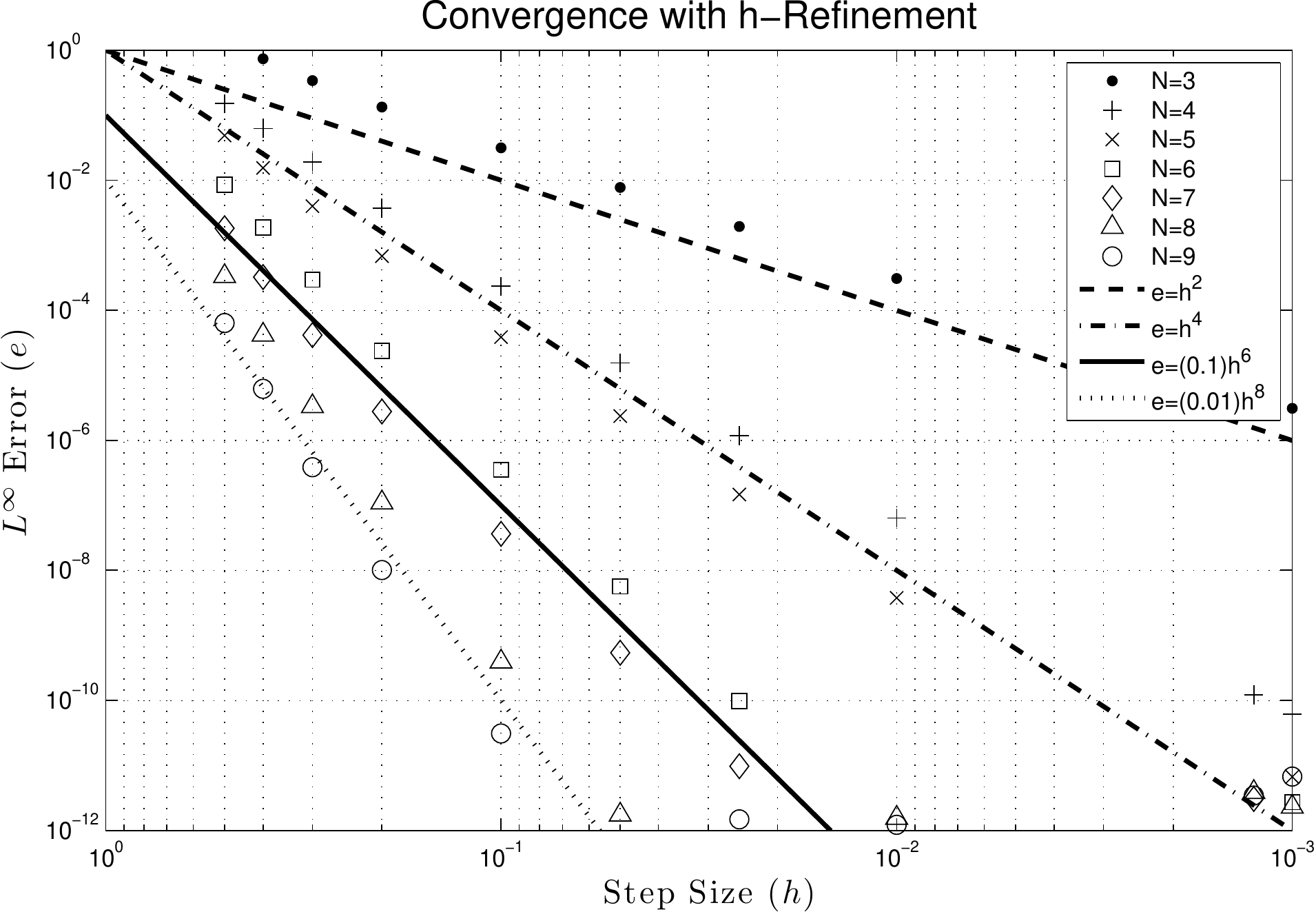}
  \caption{Order optimal convergence of the Lie group Galerkin variational integrator based on the Cayley transform for the rigid body.}
  \label{fig:hConvRigidBody}
\end{figure}

\begin{figure}[htpb]
  \centering
  \includegraphics[width = 0.75\textwidth]{./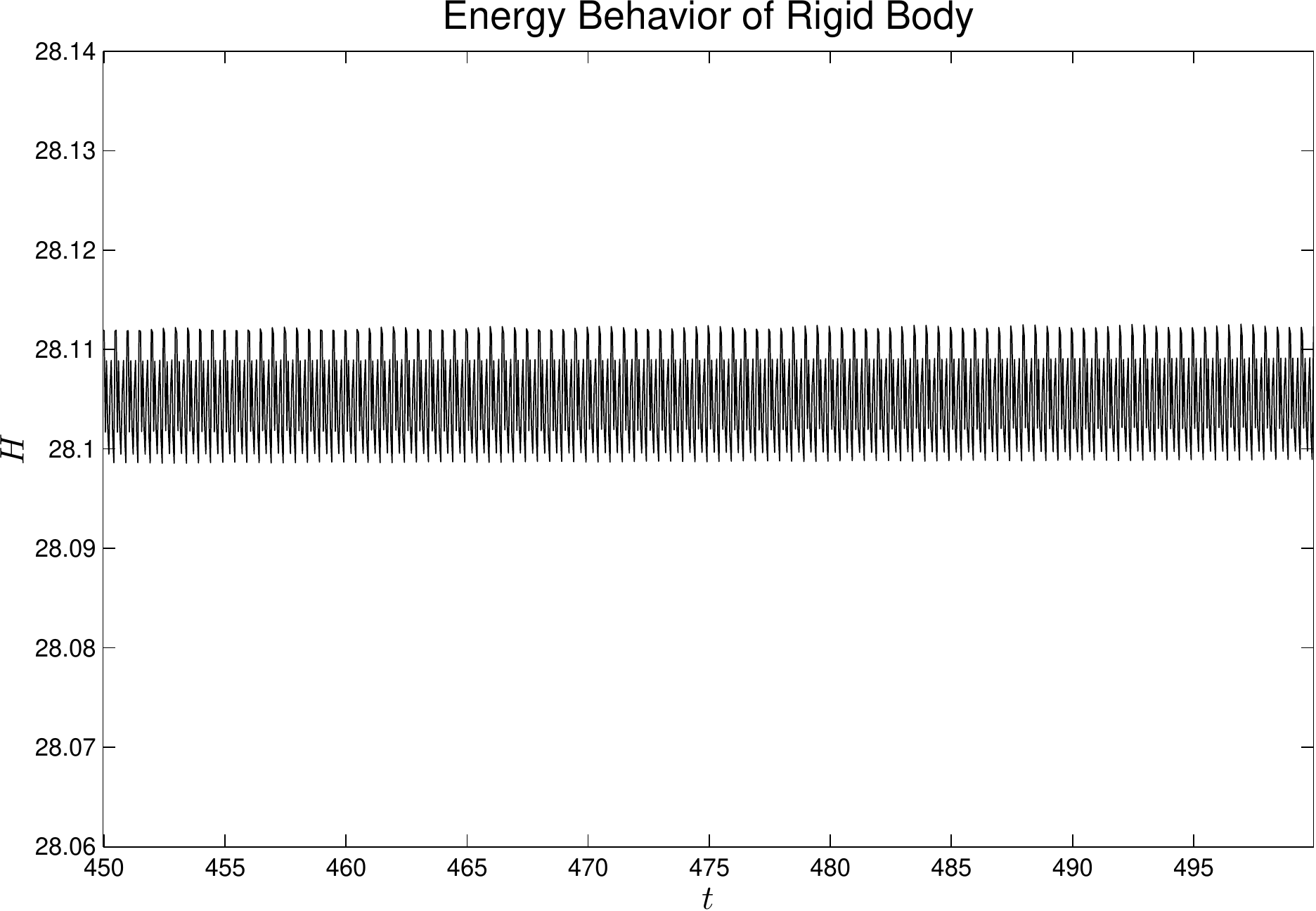}
  \caption{Energy behavior of the Lie group Galerkin variational integrator based on the Cayley transform for the rigid body. This is from a simulation starting at \(t_{0} = 0.0\), and using the parameters \(n = 12\), \(h = 0.5\) for the integrator.}
  \label{fig:EnergyBehaviorRigidBody}
\end{figure}

\begin{figure}[htpb]
  \centering
  \includegraphics[width = 0.75\textwidth]{./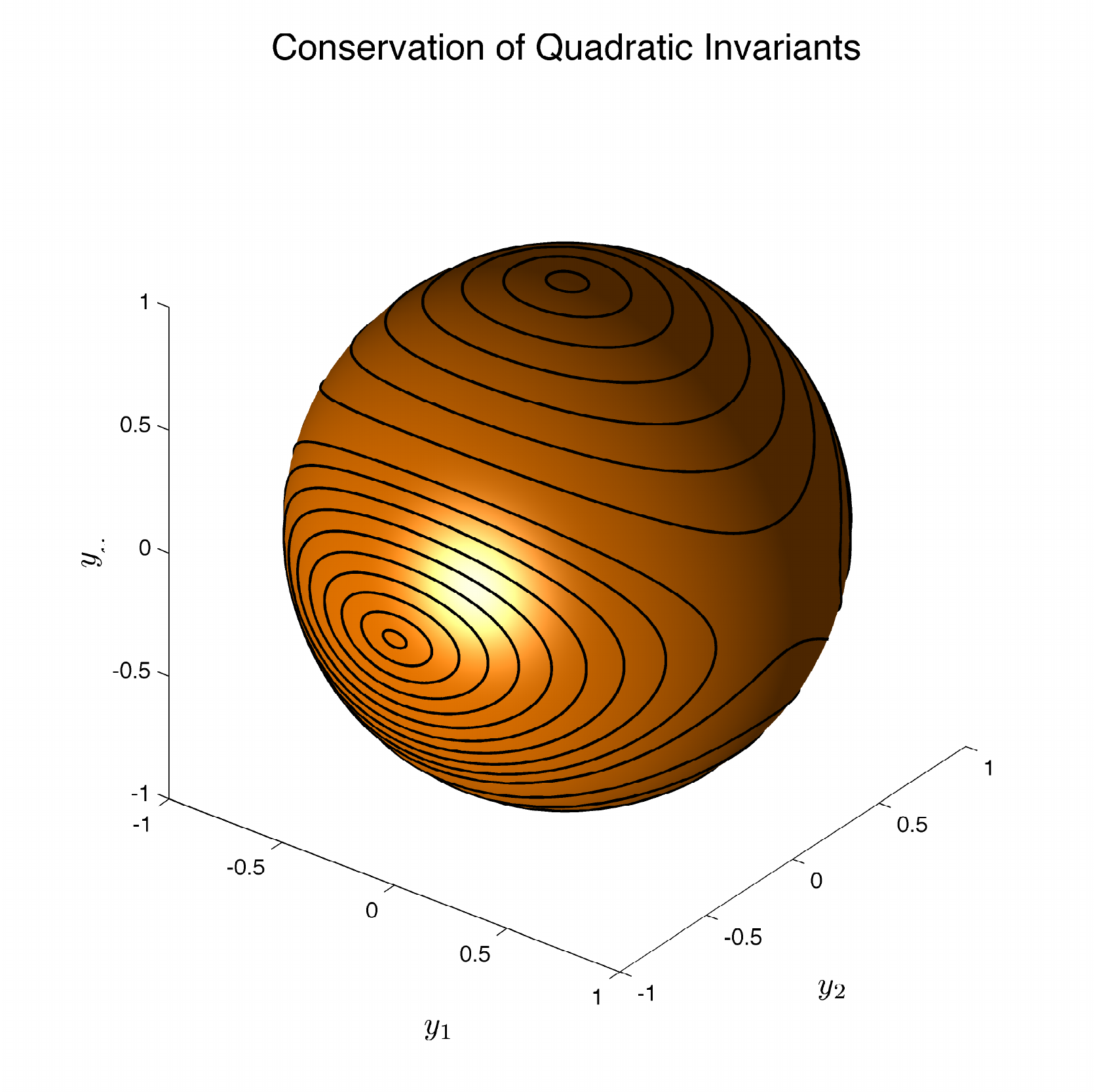}
  \caption{Conserved quantities for the Lie group Galerkin variational integrator based on the Cayley transform for the rigid body. This is from a series of computations using the parameters \(n = 8\), and \(h = 0.5\), from a variety of initial conditions. Note that the trajectories computed by the Lie group Galerkin variational integrators, which are the black curves, lie on the intersections of \(\sum_{i=1}^{3}y^{2}_{i} = 1\), \(\sum_{i=1}^{3}I_{i}^{-1}y^{2}_{i} = 2\), which are the norm of angular momentum and energy, respectively.}
  \label{fig:ConservedQuantitiesRigidBody}
\end{figure}

\subsection{Cayley Transform Method for the 3D Pendulum}

For a second numerical experiment, we examine the 3D pendulum. The 3D pendulum is the rigid body with one point fixed and under the influence of gravity, and its Lagrangian is:%
\begin{align*}
L\lefri{R,\dot{R}} &= \frac{1}{2}\mbox{tr}\lefri{\dot{R}^{T}RJ_{d}R^{T}\dot{R}} + mge_{3}^{T}R\rho\\
J_{d} &= \mbox{diag}\lefri{1, 2.8, 2}\\
\rho & = \lefri{0,0,1}^{T}
\end{align*}%
where \(\rho\) is center of mass for \(R = I\), \(m\) is the mass of the pendulum and \(g\) is the gravitational constant. We consider two sets of initial conditions, the first,%
\begin{align*}
R\lefri{0} &= I \\
R\lefri{0}^{T}\dot{R}\lefri{0} & = \widehat{\lefri{0.5,-0.5,0.4}^{T}},
\end{align*}%
which is a slight perturbation from stable equilibrium, and the second
\begin{align*}
R\lefri{0} &= \mbox{diag}\lefri{-1, 1, -1}\\
R\lefri{0}^{T}\dot{R}\lefri{0} & = \widehat{\lefri{0.5,-0.5,0.4}^{T}}
\end{align*}%
which is the pendulum slightly perturbed from its unstable equilibrium. 

We construct the variational integrator for this system using the Cayley transform. This involves adding the term \(V\lefri{\lgc} = mge_{3}^{T}\LGForm{\lgc}\rho\) to the discrete action in equations (\ref{CayDEP}), and finding the stationarity conditions of this new discrete action, which gives us the new internal stage discrete Euler-Poincar\'{e} equations. These have the same form as equations (\ref{DEPone}) -- (\ref{DEPthree}), with added terms for the potential.

For the first set of initial data, which are near the stable equilibrium, we see exactly the expected convergence with both \(h\) and \(N\) refinement, as is illustrated in Figures \ref{fig:GeoConvPend} and \ref{fig:hConvPend}. Furthermore, we see bounded oscillatory energy behavior over the length of the integration, as in Figure \ref{fig:EnergyBehaviorStablePend}.

For the second set of initial data, this system evolves chaotically, so convergence of individual trajectories is not of great interest. What is more important is the conservation of geometric invariants as the system evolves. As can be seen from Figures \ref{fig:DynamicsUnstablePen} and \ref{fig:EnergyBehaviorUnstablePen}, the energy of the system is nearly conserved, even with very aggressive time stepping. Of particular note is that even though there are many steps where the solution undergoes a change that approaches the limit on the conditioning of the natural chart, the energy error remains small.

\begin{figure}[htpb]
  \centering
  \includegraphics[width = 0.75\textwidth]{./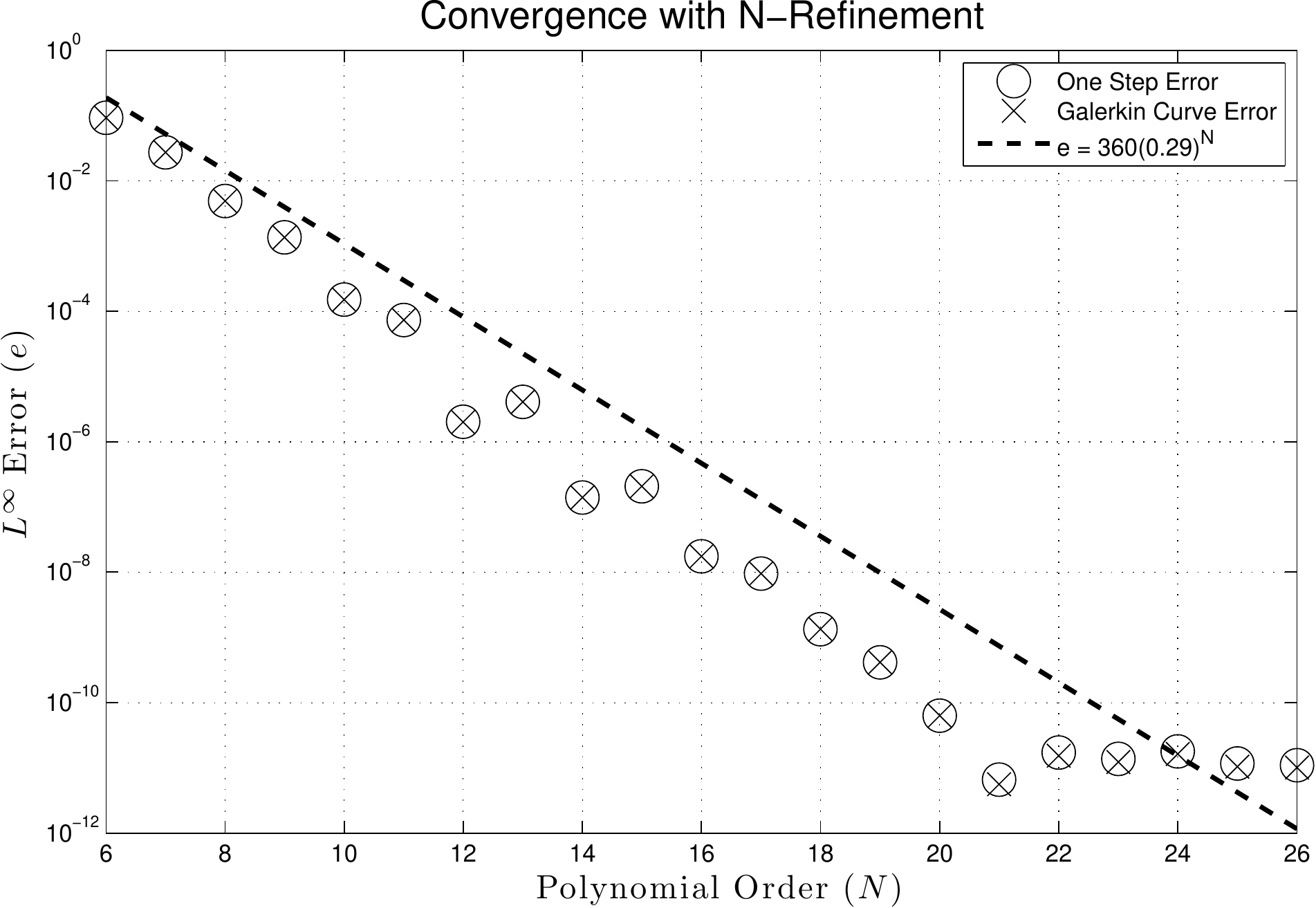}
  \caption{Geometric convergence of the Lie group spectral variational integrator based on the Cayley transform for the 3D pendulum for a small perturbation from the stable equilibrium. We use the time step \(h  = 0.5\). Note that, once again, the Galerkin curves have the same error as the one-step map.}
  \label{fig:GeoConvPend}
\end{figure}

\begin{figure}[htpb]
  \centering
  \includegraphics[width = 0.75\textwidth]{./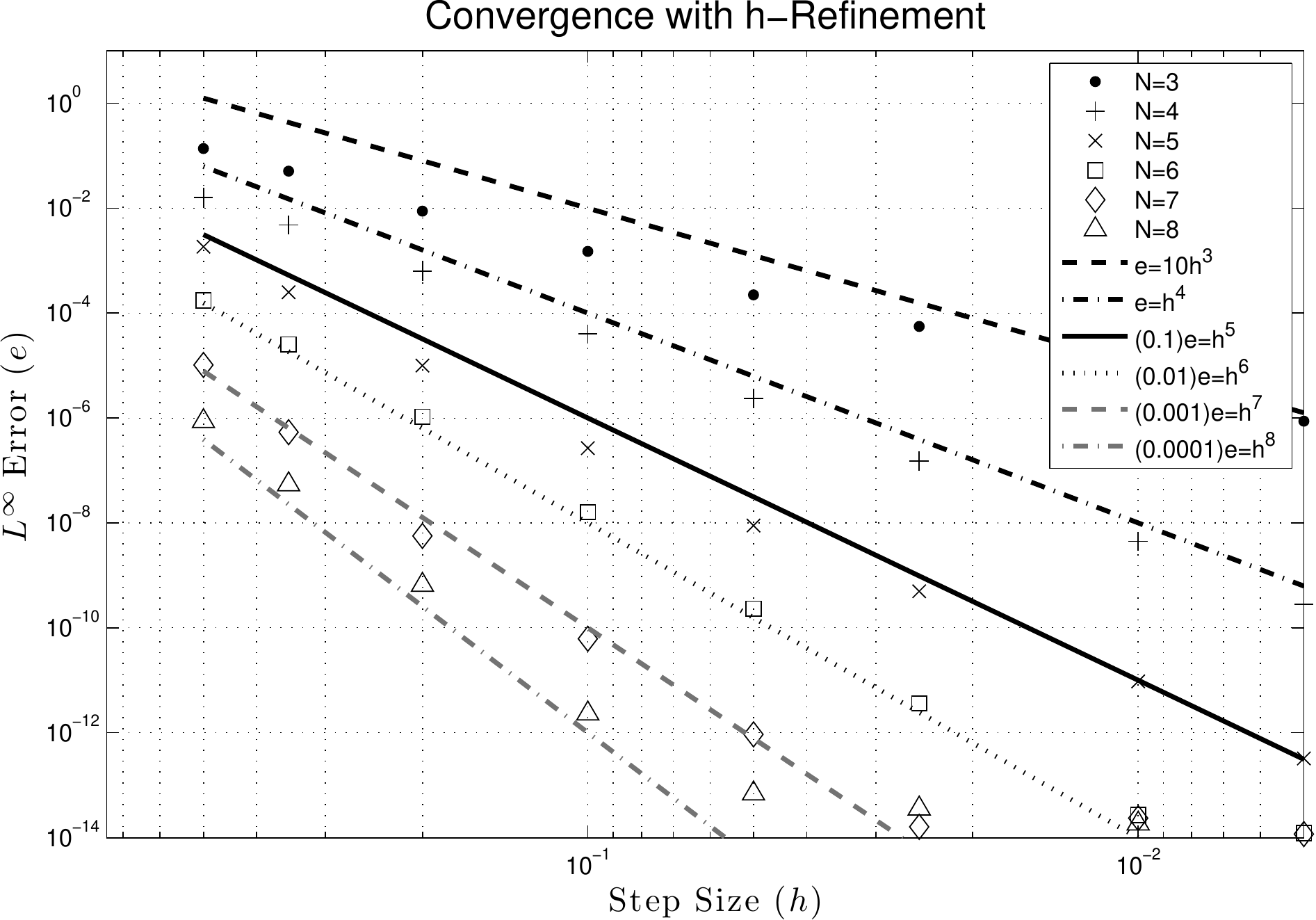}
  \caption{Order optimal convergence of Lie group Galerkin variational integrator based on the Cayley transform for the 3D pendulum for a small perturbation from the stable equilibrium. Note that we have almost exactly order optimal convergence.}
  \label{fig:hConvPend}
\end{figure}

\begin{figure}[htpb]
  \centering
  \includegraphics[width = 0.75\textwidth]{./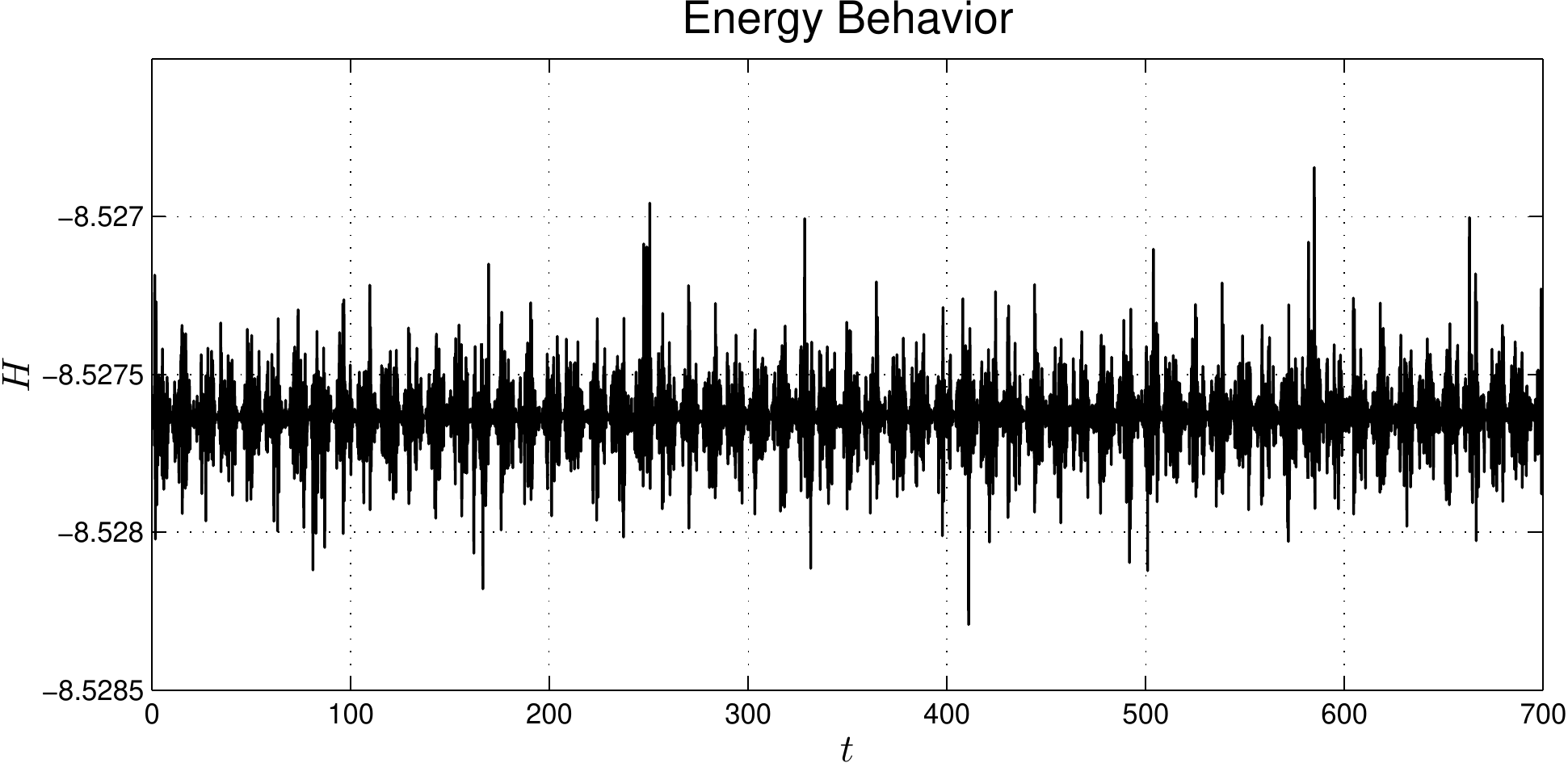}
  \caption{Energy behavior of the Lie group Galerkin variational integrator based on the Cayley transform for the 3D pendulum for a small perturbation from the stable equilibrium. This is the behavior of an integrator constructed with parameters \(n = 8\), step size \(h = 1.5\). Note that the error is both small and oscillatory, but not increasing.}
  \label{fig:EnergyBehaviorStablePend}
\end{figure}

\begin{figure}[htpb]
  \centering
  \includegraphics[width = 0.75\textwidth]{./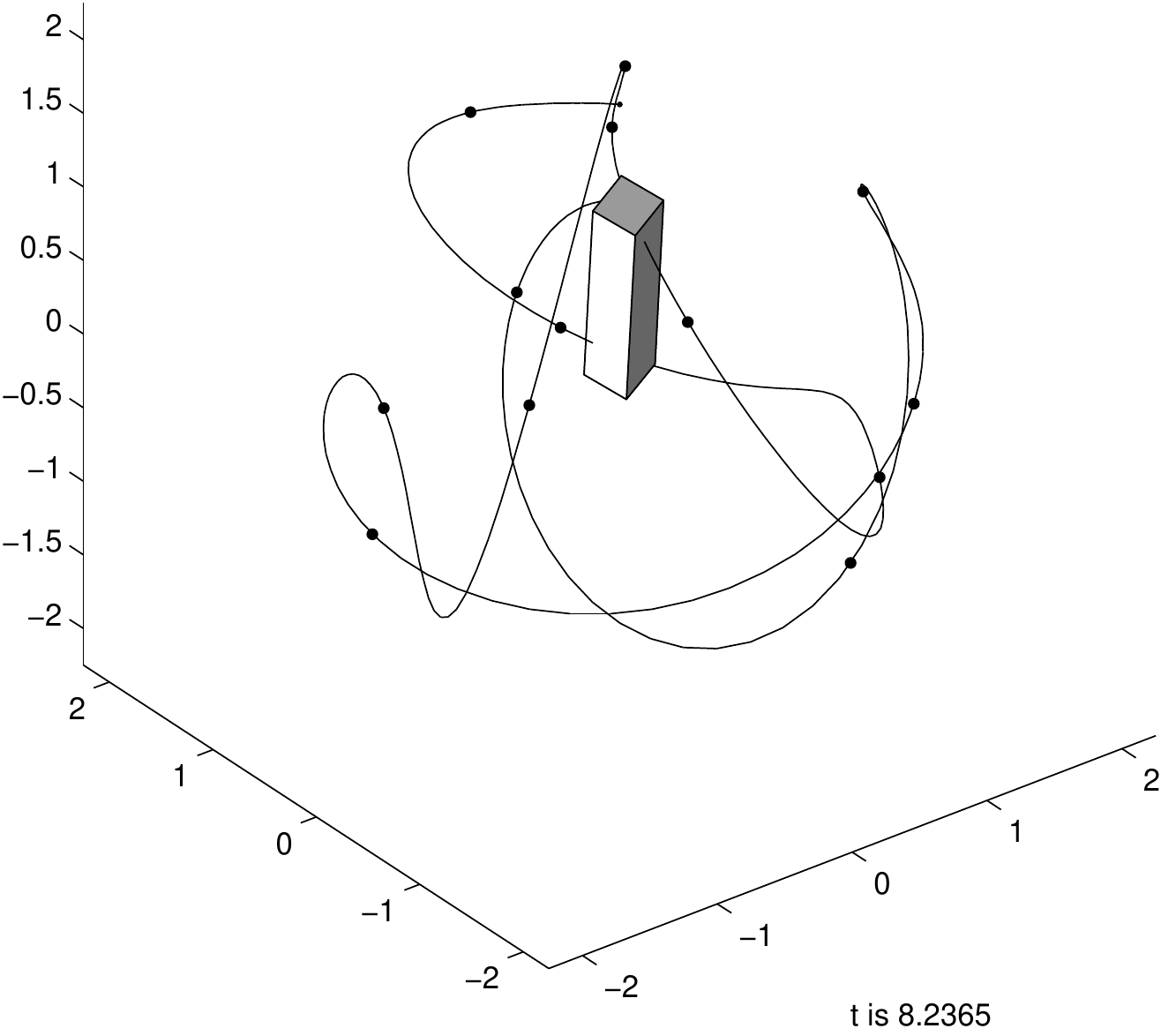}
  \caption{Dynamics of the numerical simulation of the 3D pendulum constructed from a Lie group Galerkin variational integrator. These dynamics were constructed from an integrator with \(n = 20\), \(h = 0.6\). The black dots each represent a single step of the one-step map, and the solid lines are the Galerkin curves. Note that some of the steps are almost through an angle of length \(\pi\), which is the limit of the conditioning of the natural chart.}
  \label{fig:DynamicsUnstablePen}
\end{figure}

\begin{figure}[htpb]
  \centering
  \includegraphics[width = 0.75\textwidth]{./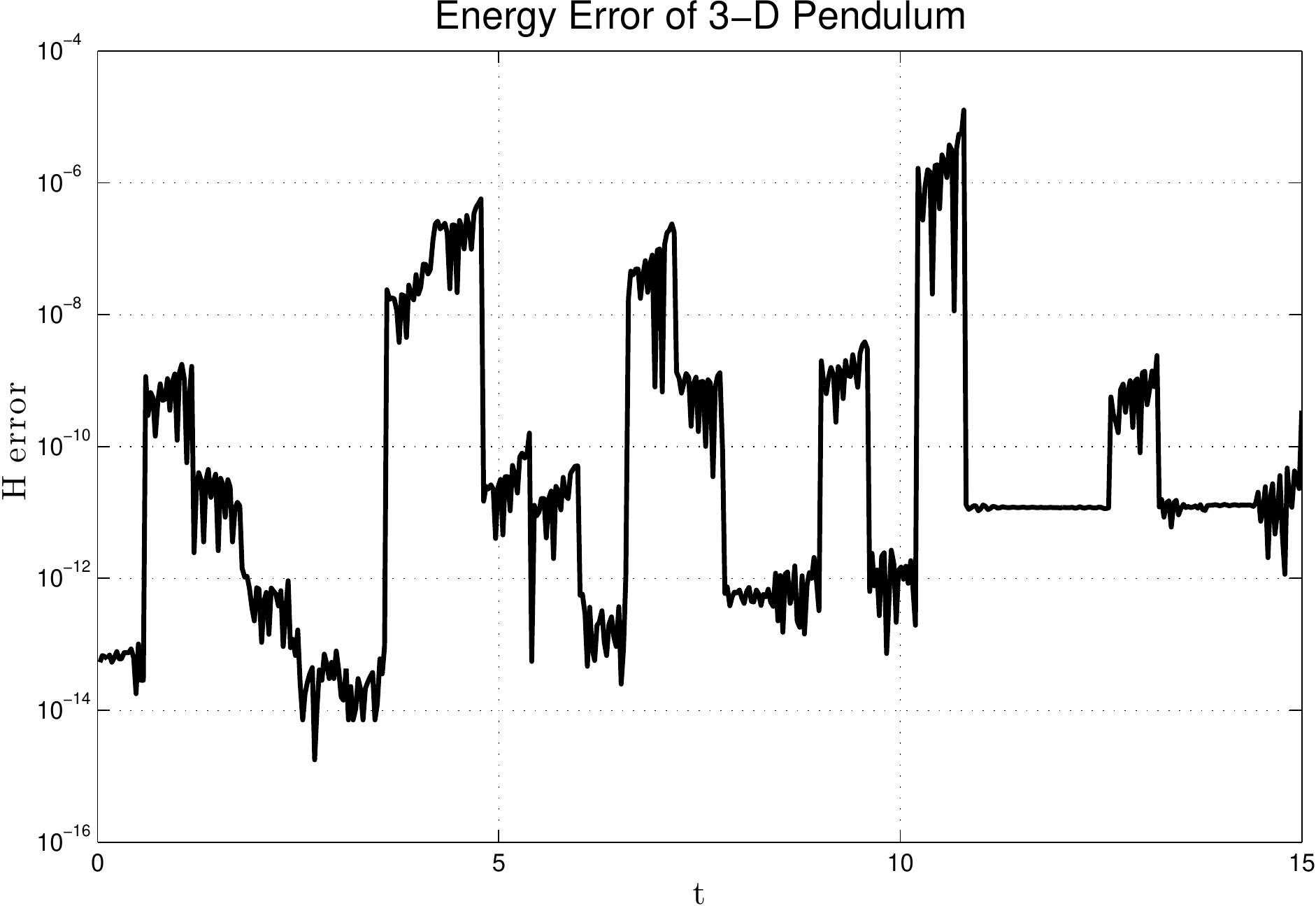}
  \caption{Energy error of the dynamics depicted in Figure \ref{fig:DynamicsUnstablePen}. The large jumps in error are associated with time steps that almost exceed the conditioning of the natural chart.}
  \label{fig:EnergyBehaviorUnstablePen}
\end{figure}

\section{Conclusions and Future Work}

In this paper, we have presented a new numerical method for Lagrangian problems on Lie groups. Specifically, we used a Galerkin construction to create variational integrators of arbitrarily high-order, and also Lie group spectral variational integrators, which converge geometrically. We demonstrated that in addition to inheriting the excellent geometric properties common to all variational integrators, which include conservation of the symplectic form, and conservation of momentum, that such integrators also are extremely stable even for large time steps, can be adapted for a large class of problems, and yield highly accurate continuous approximations to the true trajectory of the system.

We also gave an explicit example of a Lie group Galerkin variational integrator constructed using the Cayley transform. Using this construction, we demonstrated the expected rates of convergence on two different example problems, the rigid body and the 3D pendulum. We also showed that these methods both have excellent energy and momentum conservation properties. Additionally, we provided explicit expressions for the internal stage discrete Euler-Poincar\'{e} equations for the free rigid body, which form the foundation of a numerical method for a variety of problems.

\subsection{Future Work}

Symplectic integrators continue to be an area of interest, and there has been considerable success in developing high-order structure-preserving methods and applying such methods to relevant problems. While we have developed a significant amount of the theory of Lie group Galerkin variational integrators, there is considerable future work to be done.%
\subsubsection{Choice of Natural Charts} In our construction, we chose the Cayley transform to construct our natural chart. While this choice made the derivation of the resulting integrator simpler, it also introduced a limitation on the conditioning of the natural chart. A possible extension of our framework would be constructions based on natural charts constructed from other functions. An obvious choice is the exponential map, which was the choice of chart function used in earlier works that proposed this construction. A comparison of the behavior of integrators constructed from other choices of natural chart functions would be interesting further work. %
\subsubsection{Novel Variational Integrators:} One of the attractive features of our work is that we establish an optimality result for arbitrary approximation spaces. Because of this, our results hold for a variety of different possible constructions of variational integrators. It would be interesting to investigate the behavior of variational integrators constructed from novel approximation spaces, such as wavelets, or for variational integrators that make use of specialized function spaces, such as spaces that include both high and low frequency functions for problems with components that evolve on different time scales.%
\subsubsection{Larger classes of Problems} In this paper, we have focused most of our attention on the rigid body and problems that evolve on \(\SoT\). However, there are many examples of Lie group problems that evolve on other spaces. Our analysis suggests that the Galerkin approach would be effective for these problems. It would be interesting to examine Galerkin variational integrators for problems that evolve on other Lie groups, and apply our methods to other interesting applications.%
\subsubsection{Parallel Implementation and Computational Efficiency} Perhaps our method's greatest flaw is that it requires the solution of a large number of nonlinear equations at every time step. This problem is further exasperated by the fact that assembling the Newton matrix at every time step requires the repeated application of a high-order quadrature rule. While the fact that our method is stable even for very large time steps helps to overcome this computational difficulty, it would be interesting to see how much our method could be accelerated by assembling our Newton matrix in parallel.%
\subsubsection{Multisymplectic Variational Integrators} Multisymplectic geometry has become an increasingly popular framework for extending much of the geometric theory from classical Lagrangian mechanics to Lagrangian PDEs. The foundations for a discrete theory have been laid, and there have been significant results achieved in geometric techniques for structured problems such as elasticity, fluid mechanics, non-linear wave equations, and computational electromagnetism. However, there is still significant work to be done in the areas of construction of numerical methods, analysis of discrete geometric structure, and especially error analysis. Galerkin type methods have become a standard in classical numerical PDE methods, popular examples include Finite-Element, Spectral, and Pseudospectral methods. The variational Galerkin framework could provide a natural framework for extending these classical methods to structure-preserving geometric methods for PDEs.

\section{Appendix}

In \S \ref{SecGeoConv}, we stated Theorem \ref{LG_GeoConv} but did not provide a proof. This is because the proof is essentially the same as that for optimal convergence, with slight and obvious modifications. For completeness, we will provide the proof here. 
\begin{theorem} Given an interval \(\left[0,h\right]\), and a Lagrangian \(L:TG \rightarrow \mathbb{R}\), suppose that \(\trug\lefri{t}\) solves the Euler-Lagrange equations on that interval exactly. Furthermore, suppose that the exact solution \(\trug\lefri{t}\) falls within the range of the natural chart, that is:%
\begin{align*}
\trug\lefri{t} = \LGForm{\trula\lefri{t}}
\end{align*}
for some \(\trula \in C^{2}\lefri{\left[0,h\right],\mathfrak{g}}\). For the function space \(\AFdFSpace\) and the quadrature rule \(\mathcal{G}\), define the Galerkin discrete Lagrangian \(\GDL{g_{0}}{g_{1}} \rightarrow \mathbb{R}\) as%
\begin{align}
\GDLh{g_{0}}{g_{1}}{n} = \ext_{\Ggalargsf{g_{0}}{g_{1}}} h \sum_{j=1}^{m}b_{j}L\lefri{g_{n}\lefri{c_{j}h},\dot{g}_{n}\lefri{c_{j}h}} = h \sum_{h=1}^{m}b_{j}L\lefri{\galgn\lefri{c_{j}h},\dgalgn\lefri{c_{j}h}} \label{DiscAct}
\end{align}%
where \(\galgn\lefri{t}\) is the extremizing curve in \(\GalSpace\). If:
\begin{enumerate}
\item there exists an approximation \(\optla \in \AFdFSpace\) such that,%
\begin{align*} 
\RMetric{\trula - \optla}{\trula - \optla}^{\frac{1}{2}} & \leq \ApproxC \ApproxK^{n}\\
\RMetric{\dtrula - \doptla}{\dtrula - \doptla}^{\frac{1}{2}} &\leq \ApproxCi \ApproxK^{n},
\end{align*}%
for some constants \(\ApproxC \geq 0\) and \(\ApproxCi \geq 0\), \(0 < \ApproxK < 1\) independent of \(n\),
\item the Lagrangian \(L\) is Lipschitz in the chosen error norm in both its arguments, that is:%
\begin{align*}
\left|L\lefri{g_{1},\dot{g}_{1}} - L\lefri{g_{2},\dot{g}_{2}}\right| \leq \LagLipC \lefri{\GroupE{g_{1}}{g_{2}} + \AlgeE{\dot{g}_{1}}{\dot{g}_{2}}} 
\end{align*}%
\item the chart function \(\Phi\) is well-conditioned in \(\GroupE{\cdot}{\cdot}\) and \(\AlgeE{\cdot}{\cdot}\), that is (\ref{CharCond1}) and (\ref{CharCond2}) hold,
\item there exists a sequence of quadrature rules \(\left\{\mathcal{G}_{n}\right\}_{n=1}^{\infty}\), \(\mathcal{G}_{n}\lefri{f} = h \sum_{j=1}^{m_{n}}\bnj f\lefri{\cnjh} \approx \int_{0}^{h} f\lefri{t}\dt\), and there exists a constant \(0 < \QuadK < 1\) independent of \(n\) such that,%
\begin{align*}
\left|\int_{0}^{h}L\lefri{g_{n}\lefri{t},\dot{g}_{n}\lefri{t}}\dt - h \sum_{j=1}^{m_{n}}\bnj L\lefri{g_{n}\lefri{\cnjh},\dot{g}_{n}\lefri{\cnjh}}\right| \leq \QuadC \QuadK^{n}
\end{align*}%
for any \(g_{n}\lefri{t} = L_{g_{0}}\Phi\lefri{\xi\lefri{t}}\) where \(\xi \in \AFdFSpace\).
\item the stationary points of the discrete action and the continuous action are minimizers,
\end{enumerate}
then the variational integrator induced by \(\GDLh{g_{0}}{g_{1}}{n}\) has error \(\mathcal{O}\lefri{\SpecK^{n}}\) for some constant \(\SpecK\) independent of \(n\), \(0 < \SpecK < 1\). 
\end{theorem}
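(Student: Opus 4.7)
The plan is to mirror the proof of Theorem \ref{LG_OptConv} line by line, simply replacing each occurrence of the algebraic bound $h^{n+1}$ or $h^{n}$ by the corresponding geometric bound $K^{n}$ whenever an approximation or quadrature hypothesis is invoked. First I would write
\begin{align*}
\left|\EDLh{g_{0}}{g_{1}}{h} - \GDLh{g_{0}}{g_{1}}{h}\right| = \left|\int_{0}^{h} L\lefri{\trug,\dtrug}\dt - h\sum_{j=1}^{m}b_{j} L\lefri{\galgn\lefri{\cjh},\dgalgn\lefri{\cjh}}\right|,
\end{align*}
and introduce the optimal approximation $\optgn\lefri{t} = \LGForm{\optnla\lefri{t}}$ supplied by hypothesis (1), so that the core pointwise estimate $|L(\trug,\dtrug) - L(\optgn,\doptgn)|$ can be bounded using the Lipschitz hypothesis (2) together with the chart-conditioning inequalities (\ref{CharCond1}) and (\ref{CharCond2}), yielding
\begin{align*}
\left|L\lefri{\trug,\dtrug} - L\lefri{\optgn,\doptgn}\right| \leq \LagLipC\lefri{\lefri{\GroupC + \AlgeGC}\ApproxC + \AlgeC \ApproxCi}\ApproxK^{n}.
\end{align*}

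Next, I would chain the two inequalities that come from the minimizer hypothesis. On the discrete side, since $\galgn$ minimizes the discrete action and $\optgn \in \GalSpace$, and using the geometric quadrature bound of hypothesis (4) applied to $\optgn$,
\begin{align*}
h\sum_{j=1}^{m}\bj L\lefri{\galgn,\dgalgn} \leq \int_{0}^{h} L\lefri{\trug,\dtrug}\dt + \QuadC \QuadK^{n} + \LagLipC h \lefri{\lefri{\GroupC + \AlgeGC}\ApproxC + \AlgeC\ApproxCi}\ApproxK^{n}.
\end{align*}
On the continuous side, since $\trug$ minimizes the action on $\CG \supset \GalSpace$ and the quadrature rule applied to $\galgn$ is controlled by hypothesis (4),
\begin{align*}
h\sum_{j=1}^{m}\bj L\lefri{\galgn,\dgalgn} \geq \int_{0}^{h} L\lefri{\trug,\dtrug}\dt - \QuadC \QuadK^{n}.
\end{align*}
Combining the two sandwich inequalities and setting $\SpecK = \max\lefri{\ApproxK,\QuadK}$ gives the desired bound
\begin{align}
\left|\EDLh{g_{0}}{g_{1}}{h} - \GDLh{g_{0}}{g_{1}}{h}\right| \leq \SpecC \SpecK^{n} \label{GeoIneq3}
\end{align}
where $\SpecC$ aggregates the constants from the two estimates.

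To conclude, I would appeal to the variational order analysis in the same way as in Theorem \ref{LG_OptConv}: the discrete Lagrangian approximates $\EDLn$ with local error $\mathcal{O}\lefri{\SpecK^{n}}$, and since $h$ is held fixed and the total time $T=Nh$ is fixed, the number of steps $N$ does not depend on $n$, so the global error of the induced variational integrator inherits the same $\mathcal{O}\lefri{\SpecK^{n}}$ decay. The main obstacle is purely bookkeeping: the two independent geometric rates $\ApproxK$ and $\QuadK$ must be merged into a single $\SpecK < 1$, which is handled cleanly by $\ApproxK^{n} + \QuadK^{n} \leq 2\SpecK^{n}$, and care must be taken that the multiplicative constants remain independent of $n$ (which they do, because the approximation and quadrature hypotheses already state this independence explicitly). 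No genuinely new analytic ingredient is required beyond what is already used in Theorem \ref{LG_OptConv}.
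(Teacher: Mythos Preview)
Your proposal is correct and follows essentially the same approach as the paper's own proof in the Appendix: introduce the optimal approximant $\optgn$, bound the action gap via the Lipschitz and chart-conditioning hypotheses, sandwich the Galerkin discrete action between upper and lower bounds using the two minimizer assumptions plus the quadrature estimate, and merge the rates by setting $\SpecK = \max(\ApproxK,\QuadK)$ before invoking Theorem~\ref{ErrorThm}. The only cosmetic difference is that you retain the explicit factor of $h$ from integrating the pointwise Lipschitz bound over $[0,h]$, whereas the paper absorbs it into the constant since $h$ is fixed under $n$-refinement.
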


\begin{proof}%
We begin by rewriting the exact discrete Lagrangian and the Galerkin discrete Lagrangian:%
\begin{align*}
\left|\EDLh{g_{0}}{g_{1}}{n} - \GDLh{g_{0}}{g_{1}}{h}\right| &= \left|\int_{0}^{h}L\lefri{\trug,\dtrug} \dt - h\sum_{j=1}^{m_{n}}\bnj L\lefri{\galgn\lefri{\cnjh},\dgalgn\lefri{\cnjh}}\right|,
\end{align*}%
where we have introduced \(\galgn\), which is the stationary point of the local Galerkin action (\ref{DiscAct}). We introduce the solution in the approximation space which takes the form \(\optgn\lefri{t} = \LGForm{\optla\lefri{t}}\), and compare the action on the exact solution to the action on this solution:%
\begin{align*}
\left|\int_{0}^{h}L\lefri{\trug,\dtrug}\dt - \int_{0}^{h}L\lefri{\optgn,\doptgn}\dt\right| &= \left|\int_{0}^{h}L\lefri{\trug,\dtrug} - L\lefri{\optgn,\doptgn}\dt\right| \\
& \leq \int_{0}^{h}\left|L\lefri{\trug,\dtrug} - L\lefri{\optgn,\doptgn}\right|\dt.
\end{align*}%
Now, we use the Lipschitz assumption to establish the bound  
\begin{align*}
\int_{0}^{h}\left|L\lefri{\trug,\dtrug} - L\lefri{\optgn,\doptgn}\right|\dt & \leq \int_{0}^{h}\LagLipC \lefri{\GroupE{\trug}{\optgn} + \AlgeE{\dtrug}{\doptgn}}\dt \\
& = \int_{0}^{h} \LagLipC \left(\GroupE{\LGForm{\trula}}{\LGForm{\optla}} + \right. \\
& \hspace{5em} \left. \AlgeE{\DLGForm{\trula}}{\DLGForm{\optla}} \right) \dt,
\end{align*}%
and the chart conditioning assumptions to see
\begin{align*}
\int_{0}^{h}\left|L\lefri{\trug,\dtrug} - L\lefri{\optgn,\doptgn}\right|\dt & \leq \int_{0}^{h} \LagLipC \left(\GroupC\RMetric{\trula - \optla}{\trula - \optla}^{\frac{1}{2}} + \AlgeC\RMetric{\dtrula - \doptla}{\dtrula - \doptla}^{\frac{1}{2}} + \right. \\
& \hspace{5em} \left. \phantom{\RMetric{\dtrula - \doptla}{\dtrula - \doptla}^{\frac{1}{2}}} \AlgeGC\RMetric{\trula - \optla}{\trula - \optla}^{\frac{1}{2}}\right) \dt \\
& \leq \int_{0}^{h} \LagLipC \lefri{\GroupC \ApproxC \ApproxK^{n} + \AlgeC \ApproxCi \ApproxK^{n} + \AlgeGC \ApproxC \ApproxK^{n}} \dt \\
& = \LagLipC\lefri{\lefri{\GroupC + \AlgeGC}\ApproxC + \AlgeC \ApproxCi}\ApproxK^{n}.
\end{align*}%
This establishes a bound between the action evaluated on the exact discrete Lagrangian and the optimal solution in the approximation space. Considering the Galerkin discrete action,
\begin{align}
h\sum_{j=1}^{m_{n}} \bnj L\lefri{\galgn,\galgn} & \leq h \sum_{j=1}^{m_{n}} \bnj L\lefri{\optgn,\doptgn} \nonumber \\  
& \leq \int_{0}^{h}L\lefri{\optgn,\doptgn}\dt + \QuadC \QuadK^{n} \nonumber \\
& \leq \int_{0}^{h}L\lefri{\trug,\dtrug}\dt + \QuadC \QuadK^{n} + \LagLipC\lefri{\lefri{\GroupC + \AlgeGC}\ApproxC + \AlgeC\ApproxCi}\ApproxK^{n} \label{GeoIneq1}
\end{align}%
where we have used the assumption that the Galerkin approximation minimizes the Galerkin discrete action and the assumption on the accuracy of the quadrature. Now, using the fact that \(\trug\lefri{t}\) minimizes the action and that \(\GalSpace \subset \CG\),
\begin{align}
h\sum_{j=1}^{m_{n}}\bnj L\lefri{\galgn,\dgalqn} & \geq \int_{0}^{h}L\lefri{\galgn,\dgalgn}\dt - \QuadC \QuadK^{n} \nonumber \\
& \geq \int_{0}^{h} L\lefri{\trug,\trug}\dt - \QuadC \QuadK^{n}. \label{GeoIneq2}
\end{align}%
Combining inequalities (\ref{GeoIneq1}) and (\ref{GeoIneq2}), we see that,%
\begin{align*}
\int_{0}^{h}L\lefri{\trug,\dtrug} \dt - \QuadC \QuadK^{n} \leq h \sum_{j=1}^{m_{n}}\bnj L\lefri{\galgn,\dgalgn} \leq \int_{0}^{h} L\lefri{\trug,\dtrug} \dt + \QuadC \QuadK^{n} + \LagLipC\lefri{\lefri{\GroupC + \AlgeGC} \ApproxC + \AlgeC \ApproxCi}\ApproxK^{n}
\end{align*}%
which implies%
\begin{align}
\left|\int_{0}^{h} L\lefri{\trug,\dtrug} \dt - h \sum_{j=1}^{m_{n}}\bnj L\lefri{\galgn,\dgalgn}\right| \leq \lefri{\QuadC + \LagLipC\lefri{\lefri{\GroupC + \AlgeGC}\ApproxC + \AlgeC\ApproxCi}} \SpecK^{n} \label{GeoIneq3}
\end{align}%
where \(\SpecK = \max\lefri{\ApproxK,\QuadK}\). The left hand side of (\ref{GeoIneq3}) is exactly \(\left|\EDLh{g_{0}}{g_{1}}{h} - \GDLh{g_{0}}{g_{1}}{n}\right|\), and thus%
\begin{align*}
\left|\EDLh{g_{0}}{g_{1}}{h} - \GDLh{g_{0}}{g_{1}}{n}\right| \leq \lefri{\QuadC + \LagLipC\lefri{\lefri{\GroupC + \AlgeGC}\ApproxC + \AlgeC\ApproxCi}} \SpecK^{n}.
\end{align*}%
This states that the Galerkin discrete Lagrangian approximates the exact discrete Lagrangian with error \(\mathcal{O}\lefri{\SpecK^{n}}\), and by Theorem (\ref{ErrorThm}) this further implies that the Lagrangian update map has error \(\mathcal{O}\lefri{\SpecK^{n}}\).
\end{proof}%
\bibliographystyle{agsm}
\bibliography{LG_SVI_Bibliography}

\end{document}